\newtheorem{thm}{Theorem}[section]
\newtheorem{lem}[thm]{Lemma}
\newtheorem{prop}[thm]{Proposition}
\newtheorem*{prob*}{Problem}
\newtheorem*{thm*}{Theorem}
\theoremstyle{definition}
\newtheorem{defn}[thm]{Definition}
\newtheorem*{defn*}{Definition}
\newtheorem{rem}[thm]{Remark}
\newtheorem*{rem*}{Remark}
\numberwithin{equation}{section}
\DeclareMathOperator{\Conf}{Conf}
\DeclareMathOperator{\Prob}{Prob}
\DeclareMathOperator{\R}{\mathbb{R}}
\DeclareMathOperator{\Y}{\mathbb{Y}}
\DeclareMathOperator{\C}{\mathbb{C}}
\DeclareMathOperator{\diag}{diag}
\DeclareMathOperator{\ch}{\textbf{ch}}
\DeclareMathOperator{\DIM}{DIM}
\DeclareMathOperator{\Ewens}{Ewens}
\DeclareMathOperator{\Uniform}{Uniform}
\DeclareMathOperator{\Whittaker}{Whittaker}
\DeclareMathOperator{\Reg}{Reg}
\DeclareMathOperator{\Sym}{Sym}
\DeclareMathOperator{\Cyl}{Cyl}
\newcommand{\tLambda}{\widetilde{\Lambda}}
\begin{document}
\title[Generalized regular representations of  big wreath products]
{\bf{Generalized regular representations of  big wreath products}}

\author{Eugene Strahov}
\address{Department of Mathematics, The Hebrew University of Jerusalem, Givat Ram, Jerusalem 91904, Israel}
\email{strahov@math.huji.ac.il}
\keywords{Wreath products, the infinite symmetric group, harmonic analysis on groups, generalized regular representations,  characters, measures on partitions, central measures.\\
This work was supported by the BSF grant 2018248 ``Products of random matrices via the theory of symmetric functions''. }
\commby{}
\begin{abstract}
Let $G$ be a finite group with $k$ conjugacy classes, and $S(\infty)$ be the infinite symmetric group, i.e. the group of finite permutations of $\left\{1,2,3,\ldots\right\}$. Then the wreath product $G_{\infty}=G\sim S(\infty)$
of $G$ with $S(\infty)$  (called the big wreath product) can be defined. The group $G_{\infty}$ is a generalization of the infinite symmetric group, and it is  an example of a
``big'' group, in Vershik's terminology. For such groups  the two-sided regular
representations are irreducible, the conventional scheme of harmonic analysis
is not applicable, and the problem of harmonic analysis is a nontrivial problem with connections
to different areas of mathematics and mathematical physics.

Harmonic analysis on the infinite symmetric group
was developed in the works by Kerov, Olshanski, and Vershik, and Borodin and Olshanski. The goal of this
paper is to extend this theory to the case of
$G_{\infty}$.
In particular,  we construct  an analogue $\mathfrak{S}_{G}$ of the space of virtual permutations. We then formulate  and prove a theorem characterizing all central probability
measures on $\mathfrak{S}_{G}$. Next, we introduce generalized regular representations $\left\{T_{z_1,\ldots,z_k}:\; z_1\in\C,\ldots,z_k\in\C\right\}$ of the big wreath product $G_{\infty}$, which are
analogues of the Kerov-Olshanski-Vershik generalized regular representations
of the infinite symmetric group. We derive an explicit formula for the characters
of $T_{z_1,\ldots,z_k}$. The spectral measures of these representations are characterized in different ways. In particular, these spectral measures are associated with point processes whose correlation functions are explicitly computed. Thus, in representation-theoretic terms, the paper solves a natural problem of harmonic analysis for the big wreath products:
our results describe the decomposition of $T_{z_1,\ldots,z_k}$ into irreducible components.

\end{abstract}
\maketitle
\section{Introduction}
\subsection{Preliminaries and formulation of the problem}\label{SubsectionPreliminaries}
One of the main goals of noncommutative harmonic analysis on groups
is to  describe the decomposition of a natural representation into irreducible components. For example, if $G$ is a finite group or a compact group, and $T$ is a regular
representation of $G$, then  each irreducible
representation is contained in $T$ with multiplicity equal to its degree.
However, if $G$ is replaced by an infinite-dimensional analogue of a classical
group (such as the infinite symmetric group $S(\infty)=\underset{\longrightarrow}{\lim}\;S(n)$, or the infinite unitary group $U(\infty)=\underset{\longrightarrow}{\lim}\;U(n)$), the situation becomes much more complicated, and a deep theory
with connections to different areas of mathematics, from enumerative combinatorics to random growth models, emerges.

Harmonic analysis on the infinite symmetric group is developed in the papers by
Kerov, Olshanski and Vershik \cite{KerovOlshanskiVershik, KerovOlshanskiVershikAnnouncement},
Olshanski \cite{OlshanskiPointProcesses}, Borodin \cite{Borodin1, Borodin2}, Borodin and Olshanski
\cite{BorodinOlshanskiLetters}. The problem of harmonic analysis on
$S(\infty)$ is reformulated as that for the Gelfand pair $\left(S(\infty)\times S(\infty),\diag\left(S(\infty)\right)\right)$ (in the sense of Olshanski \cite{Olshanski}).
We then  deal with the biregular representation of the infinite symmetric group
which is defined as follows. Let $\mu$ be the counting measure on $S(\infty)$. Then the biregular representation of $S(\infty)$ is
a unitary representation  $T$ of the group $S(\infty)\times S(\infty)$ in the Hilbert space
$L^2(S(\infty),\mu)$  defined by
$$
\left(T(g_1,g_2)f\right)(x)=f(g_2^{-1}xg_1),\;\; f\in L^2(S(\infty),\mu),\;\; (g_1,g_2)\in S(\infty)\times S(\infty).
$$
The starting point of analysis in Kerov, Olshanski and Vershik \cite{KerovOlshanskiVershik, KerovOlshanskiVershikAnnouncement} is the observation that the biregular representation of the infinite symmetric group is irreducible. Thus
the conventional scheme of the harmonic analysis should be modified.
This is achieved by construction of the space of virtual permutations $\mathfrak{S}$ which is a compactification of $S(\infty)$. Then the natural action of $S(\infty)\times S(\infty)$ on $S(\infty)$ is extended to $\mathfrak{S}$. On the space $S(\infty)$ a one-parameter family of measures
$\left\{\mu_t:\; t>0\right\}$ is introduced. These measures are defined as projective limits  of
the Ewens measures on the finite symmetric groups, and have a number of remarkable  properties. These properties enable to construct a deformation $T_{z}$ of the biregular representation, which is reducible and has a rich structure. The Kerov-Olshanski-Vershik generalized regular representation $T_{z}$
is labelled by the complex parameter $z$ such that $|z|^2=t$, and acts in the Hilbert space
$L^2\left(\mathfrak{S},\mu_t\right)$.

Clearly, a usual definition of a representation character is not applicable in the case of
the representation $T_{z}$. However, the character $\chi_z$ of $T_z$ can be introduced
using the language of spherical representations, and of associated spherical functions.
Denote by $\mathbf{1}$ the function on $\mathfrak{S}$ identically equal to $1$. It can be viewed as a vector from $L^2\left(\mathfrak{S},\mu_t\right)$ which is invariant with respect to the action of $\diag\left(S\left(\infty\right)\right)$. Thus $\left(T_z,\mathbf{1}\right)$ can be understood as a spherical representation of the Gelfand pair $\left(S(\infty)\times S(\infty),\diag\left(S(\infty)\right)\right)$.  The spherical function $\varphi_z$ of $\left(T_z,\mathbf{1}\right)$
is the matrix element
\begin{equation}
\varphi_z\left(g_1,g_2\right)=\left<T_z\left(g_1,g_2\right)\mathbf{1},\mathbf{1}\right>_{L^2\left(\mathfrak{S},\mu_t\right)},\;\;\left(g_1,g_2\right)\in S(\infty)\times S(\infty).
\nonumber
\end{equation}
A complex-valued function $\chi$ on $S(\infty)$ is called a character of $S(\infty)$ if it is positive definite, central, and normalized to take value $1$
at the unit element of $S(\infty)$.
There is a one-to-one correspondence $\varphi\longleftrightarrow\chi$ between the set
 of spherical functions associated with the spherical representations of the Gelfand pair
$\left(S(\infty)\times S(\infty),\diag\left(S(\infty)\right)\right)$,
and the set
$\mathcal{X}\left(S(\infty)\right)$ of characters of $S(\infty)$. In particular, the spherical function $\varphi_z$ corresponds to a character $\chi_z$,  $\chi_z\in\mathcal{X}\left(S(\infty)\right)$, and
the relation between $\varphi_z$ and $\chi_z$ is
\begin{equation}\label{varchicor}
\chi_z\left(g\right)=\varphi_z\left(g,e\right),\;\;g\in S(\infty).
\end{equation}
The function $\chi_z$ on $S(\infty)$ defined by equation (\ref{varchicor}) is called the character of $T_z$.

 Kerov, Olshanski and Vershik
\cite{KerovOlshanskiVershik, KerovOlshanskiVershikAnnouncement} found the restriction of $\chi_z$ to $S(n)$ in terms of irreducible characters of $S(n)$. Namely, let $\Y_n$ be the set of Young diagrams with $n$ boxes. For $\lambda\in\Y_n$ denote by $\chi^{\lambda}$ the corresponding normalized irreducible character
of the symmetric group $S(n)$. Then for any $n=1,2,\ldots $ the following formula holds true
\begin{equation}
\chi_z|_{S(n)}=\sum\limits_{\lambda\in\Y_n}M_z^{(n)}(\lambda)\chi^{\lambda}.
\nonumber
\end{equation}
The coefficient $M_z^{(n)}$ is a probability measure (called the $z$-measure) on the set $\Y_n$ of Young diagrams with $n$ boxes, and there is an explicit formula for $M_{z}^{(n)}$. The $z$-measures
are interesting objects by themselves, and are studied in many papers, see, for example,
Borodin and Olshanski \cite{BorodinOlshanskiRSK, BorodinOlshanskiKernel}, Okounkov \cite{Okounkov},
Borodin, Olshanski, and Strahov \cite{BorodinOlshanskiStrahov}.

As any character of $S(\infty)$, the character $\chi_z$ can be represented
in terms of the extreme characters, namely
\begin{equation}
\chi_z\left(g\right)=\int\limits_{\Omega}\chi^{(\omega)}(g)P_{z}(d\omega).
\nonumber
\end{equation}
Here $\Omega$ is the Thoma set,
\begin{equation}\label{ThomaSet}
\Omega=\left\{\alpha_1\geq\alpha_2\geq\ldots\geq 0;\;\beta_1\geq\beta_2\geq\ldots\geq 0:\;\;
\sum\limits_{i=1}^{\infty}\left(\alpha_1+\beta_i\right)\leq 1\right\},
\nonumber
\end{equation}
$\chi^{(\omega)}$ are the extreme characters of  $S(\infty)$
parameterized by points $\omega$ of $\Omega$, and $P_{z}$ is a probability measure on $\Omega$  called the spectral measure of the Kerov-Olshanski-Vershik generalized representation $T_z$.

The extreme characters $\chi^{(\omega)}$ are given explicitly by the Thoma theorem \cite{Thoma}. One of the problems of the harmonic analysis on the infinite symmetric group is to describe the probability measure $P_{z}$. The solution of this problem is obtained in the papers by Olshanski \cite{OlshanskiPointProcesses}, Borodin \cite{Borodin1, Borodin2}, Borodin and Olshanski \cite{BorodinOlshanskiLetters}, where the measure $P_{z}$ is interpreted as a point process
$\mathcal{P}_{z}$ on the punctured interval $I^{\ast}=[-1,1]\setminus\{0\}$.
Borodin and Olshanski  show that a certain modification (``lifting") of
$\mathcal{P}_{z}$ is a determinantal point process whose correlation functions can be explicitly computed.

The goal of the present paper is to extend the results mentioned above to the case of the wreath product $G\sim S(\infty)$ of a finite group $G$ with the infinite symmetric group $S(\infty)$. If $G_{\infty}=G\sim S(\infty)$, then we are dealing with the Gelfand pair $\left(G_{\infty}\times G_{\infty},\diag\left(G_{\infty}\right)\right)$, with its spherical representations  and the spherical functions. As in the case of the infinite symmetric group, the biregular representation of $G_{\infty}\times G_{\infty}$ is irreducible, and the standard scheme of the harmonic analysis should be modified.

Below we give a summary of main results obtained in this paper.
\subsection{Summary of results}
\subsubsection{The space of $G$-virtual permutations $\mathfrak{S}_{G}$}
Let $G$ be a finite group with $k$ conjugacy classes, and let $G\sim S(n)$ be the wreath product of $G$ with the symmetric group $S(n)$. On $G\sim S(n)$ a probability measure
$P^{\Ewens}_{t_1,\ldots,t_k;n}$ can be introduced, which depends on $k$ strictly positive parameters $t_1$, $\ldots$, $t_k$. The measure $P^{\Ewens}_{t_1,\ldots,t_k;n}$ is a generalization of the Ewens probability measure on the symmetric group.

For any $n\geq 1$ we define a projection $p_{n,n+1}: G\sim S(n+1)\longrightarrow G\sim S(n)$, which is equivariant with respect to  the two-sided action of $G\sim S(n)$. Then we define the space $\mathfrak{S}_{G}$ (called the space of $G$-virtual permutations in the paper) as
the projective limit of the finite sets $G\sim S(n)$ taken with respect to $p_{n,n+1}$.

To ensure a reasonable definition of the generalized regular representations of the big wreath products, the projection $p_{n,n+1}$ is required to satisfy several  conditions.
The construction of such a projection is a non-trivial task, and it is one of the achievements of the present paper.
\subsubsection{Central measures}
A remarkable property of $p_{n,n+1}$ is that the Ewens probability measures
$P^{\Ewens}_{t_1,\ldots,t_k;n}$ are pairwise consistent with respect to $p_{n,n+1}$.
This property makes it possible to define, for any $t_1>0$, $\ldots$, $t_k>0$, a probability
measure $P^{\Ewens}_{t_1,\ldots,t_k}$ on the space $\mathfrak{S}_{G}$
as the projective limit, $P^{\Ewens}_{t_1,\ldots,t_k}=\underset{\longleftarrow}{\lim}
P^{\Ewens}_{t_1,\ldots,t_k;n}$.
This probability measure, $P^{\Ewens}_{t_1,\ldots,t_k}$, is central, i.e. it is invariant under the conjugations by $G\sim S(\infty)$.  A non-trivial problem is to describe all
central measures on $\mathfrak{S}_{G}$, and our Theorem  \ref{THEOREMCENTRALMEASURES} gives the solution of this problem. Namely, Theorem  \ref{THEOREMCENTRALMEASURES} establishes a one-to-one correspondence between central measures on $\mathfrak{S}_{G}$, and arbitrary probability measures on some subspace $\overline{\nabla}^{(k)}$ of $\underset{k}{\underbrace{[0,1]^{\infty}\times\ldots\times[0,1]^{\infty}}}$. In particular, $P^{\Ewens}_{t_1,\ldots,t_k}$ turns into the \textit{multiple Poisson-Dirichlet distribution} under this correspondence, see Section \ref{CorPPD}.

Note that there is a one-to-one correspondence between central probability measures on the wreath product $G\sim S(n)$, and probability measures on the set $\Y_n^{(k)}$ of multiple partitions of $n$ into $k$ components (the elements of $\Y_n^{(k)}$ parameterize the conjugacy classes of $G\sim S(n)$). Our Theorem \ref{THEOREMCENTRALMEASURES} can be viewed as a nontrivial infinite-dimensional analogue of this correspondence.
\subsubsection{The generalized regular representation of $G_{\infty}=G\sim S(\infty)$}
We show that the probability measure $P^{\Ewens}_{t_1,\ldots,t_k}$ on the space $\mathfrak{S}_{G}$ is quasiinvariant with respect to the action of
$G_{\infty}\times G_{\infty}$ on $\mathfrak{S}_{G}$. This enables us to construct
in Section \ref{SectionGeneralizedRepresentations} an analogue $T_{z_1,\ldots,z_k}$ of the Kerov-Olshanski-Vershik generalized regular representation $T_z$.
The parameters $z_1\in\C\setminus\{0\}$, $\ldots$,  $z_k\in\C\setminus\{0\}$
of $T_{z_1,\ldots,z_k}$ are related with the parameters $t_1$, $\ldots$, $t_k$
of $P_{t_1,\ldots,t_k}^{\Ewens}$ as
$|z_1|^2=t_1$, $\ldots$, $|z_k|^2=t_k$. We show (see Theorem \ref{THEOREM6.3}) that $T_{z_1,\ldots,z_k}$
is equivalent to the inductive limit of the two-sided regular representations of $G_n\times G_{n}$, $G_n=G\sim S(n)$.
\subsubsection{The formula for the character $\chi_{z_1,\ldots,z_k}$ of $T_{z_1,\ldots,z_k}$}.
Let $\left(T_{z_1,\ldots,z_k},\zeta_{0}\right)$ be the spherical representation of the Gelfand pair  $\left(G_{\infty}\times G_{\infty},\diag\left(G_{\infty}\right)\right)$.
Denote by $\Phi_{z_1,\ldots,z_k}$ the spherical function of $\left(G_{\infty}\times G_{\infty},\diag\left(G_{\infty}\right)\right)$. The character $\chi_{z_1,\ldots,z_k}$ of $T_{z_1,\ldots,z_k}$ is defined in terms of $\Phi_{z_1,\ldots,z_k}$ as
$$
\chi_{z_1,\ldots,z_k}(x)=\Phi_{z_1,\ldots,z_k}(x,e),\; x\in G_{\infty},
$$
where $e$ is the unit element of $G_{\infty}$. The function $\chi_{z_1,\ldots,z_k}$ is a character of $G_{\infty}$ (in the sense of Definition \ref{characters7} below).  Theorem \ref{TheoremMultpleZmeasures} of the present paper gives a formula  for $\chi_{z_1,\ldots,z_k}$:
each restriction of $\chi_{z_1,\ldots,z_k}$ to $G_n=G\sim S(n)$ is represented as a linear combination of the normalized irreducible characters of $G_n$, and the coefficients of this linear combination are explicitly computed. These coefficients, $M_{z_1,\ldots,z_k}^{(n)}$, are probability measures on the set $\Y_n^{(k)}$ of multiple partitions of $n$ into $k$ components, and can be understood as generalizations of the $z$-measures $M^{(n)}_{z}$ mentioned in Section \ref{SubsectionPreliminaries}. An  explicit formula for $M_{z_1,\ldots,z_k}^{(n)}$ is derived in this paper, see equation (\ref{MultipleZmeasures}).
\subsubsection{The spectral measures}
The characters $\chi_{z_1,\ldots,z_k}$ admit the following integral representation
\begin{equation}\label{1.2.4.1}
\chi_{z_1,\ldots,z_k}(x)=\int\limits_{\Omega\left(G\right)}f_{\omega}(x)P_{z_1,\ldots,z_k}(d\omega),\;\; x\in G\sim S(\infty).
\end{equation}
Here $\Omega\left(G\right)$ is the generalized Thoma set, $f_{\omega}$ is the extreme character of $G\sim S(\infty)$, and  $P_{z_1,\ldots,z_k}$ is a probability measure on the set $\Omega\left(G\right)$ (called the spectral measure of
$\chi_{z_1,\ldots,z_k}$). Equation (\ref{1.2.4.1}) is a consequence of Theorem 3.5 in Hora and Hirai \cite{HoraHirai} which gives an integral representation for any character of $G\sim S(\infty)$. Hora and Hirai
\cite{HoraHirai} provides explicit formulae for both $\Omega\left(G\right)$ and $f_{\omega}$, see Theorem 2.5 and Theorem 3.4 in Ref. \cite{HoraHirai}.

The problem
addressed in the present paper is to describe the spectral measures
$P_{z_1,\ldots,z_k}$.  In representation-theoretic terms, this is equivalent to
description of the decomposition of $T_{z_1,\ldots,z_k}$ into irreducible components, which is
 a natural problem of harmonic analysis for the big wreath product $G_{\infty}$.
 A solution is given by our Theorem \ref{THEOREMDESCRIPTIONOFPz1z2zk}
where the  spectral measures
$P_{z_1,\ldots,z_k}$ are described in terms of the spectral measures
$P_{z_1}$, $\ldots$, $P_{z_k}$ of the Kerov-Olshanski-Vershik generalized representations $T_{z_1}$, $\ldots$, $T_{z_k}$.
\subsubsection{Correlation functions}
In Section \ref{SectionPointProcess11} we convert the measure $P_{z_1,\ldots,z_k}$ into a point process $\mathcal{P}_{z_1,\ldots,z_k}$.
Our Theorem \ref{MAINTHEOREMCORRELATIONS} gives the correlation functions for a lifted version
$\widetilde{\mathcal{P}}_{z_1,\ldots,z_k}$ of $\mathcal{P}_{z_1,\ldots,z_k}$ in terms of the known correlation functions of the Whittaker determinantal process.
%%%%%%%%%%%%%%%%%%%%%%%%%%%%%%%%%%%%%%%%%%%%%%%%%%%%%%%
%%%%%%%%%%%%%%%%%%%%%%%%%%%%%%%%%%%%%%%%%%%%%%%%%%%%%%%%%%%%%
\subsection{Remarks on related works}
\subsubsection{}
There are many works devoted to representation theory  of infinite analogues of classical groups, and to related questions of harmonic analysis. In particular,
the books by Kerov \cite{KerovDissertation}, Borodin and Olshanski \cite{BorodinOlshanskiBook}, and the survey paper by Olshanski \cite{OlshanskiNato} are basic references on the subject, and provide an introduction to this field of research. Besides,
the paper by Borodin and Olshanski \cite{BorodinOlshanskiUnitary} solves the problem of harmonic analysis on the infinite unitary group.
Borodin and Olshanski \cite{BorodinOlshanskiErgodic}, Olshanski \cite{OlshanskiUnitary} deal with
$\left(U(\infty),\mathfrak{U}\right)$ instead of  $\left(S(\infty),\mathfrak{S}\right)$ (where
$\mathfrak{U}$ is a certain analogue of the space of virtual permutations $\mathfrak{S}$).
The authors construct a distinguished family of invariant measures on $\mathfrak{U}$, and study the decomposition of these measures on ergodic components in terms of determinantal point processes. In Gorin \cite{Gorin}, Gorin and Olshanski \cite{GorinOlshanski}, Cuenca and Gorin \cite{CuencaGorin}  a quantization of the harmonic analysis on the infinite-dimensional unitary, symplectic, and orthogonal groups is considered, and $q$-deformed versions of characters are classified. The  papers by  Gorin, Kerov, and  Vershik \cite{GorinKerovVershik}, Cuenca and Olshanski \cite{CuencaOlshanski1}
are devoted to characters and  representations of the group of infinite matrices over a finite field.

\subsubsection{}
The study of representation theory of wreath products with the infinite symmetric group begins in the works by Boyer \cite{Boyer}, Hirai,  Hirai and Hora \cite{HiraiHiraiHoraI},  Hora, Hirai and Hirai \cite{HoraHiraiHiraiII}.
In particular, in papers \cite{HiraiHiraiHoraI,HoraHiraiHiraiII}
the authors investigate asymptotic behaviour of characters of $G\sim S(n)$ as $n\rightarrow\infty$, and analyze its connection with the characters of $G\sim S(\infty)$. Paper by Hora and Hirai \cite{HoraHirai} studies harmonic functions on the branching graph $\Gamma\left(G\right)$ of the inductive system of $G\sim S(n)$'s, and derive Martin integral expressions for such functions. The Martin integral representation for harmonic functions on a Jack deformation $\Gamma_{\theta}\left(G\right)$ of $\Gamma\left(G\right)$ is derived in Strahov \cite{StrahovMPS}.

\subsubsection{} In the present paper we are dealing with the Gelfand pair
$\left(G_{\infty}\times G_{\infty},\diag\left(G_{\infty}\right)\right)$,
where $G_{\infty}$ is the big wreath product, $G_{\infty}=G\sim S(\infty)$.
Similar results can be obtained for other Gelfand pairs
constructed from the infinite symmetric group and its analogues.
For example, let $S(2n)$ be  the group of permutations of the set $\left\{-n,\ldots,-1,1,\ldots,n\right\}$, and let $H(n)$ be its subgroup defined as the centralizer of the product of transpositions $(-n,n)$, $(-n+1,n-1)$,
 $\ldots$, $(-1,1)$. It is known that $\left(S(2n),H(n)\right)$ is a Gelfand pair,
and that its inductive limit, $\left(S(2\infty), H(\infty)\right)$ , is a Gelfand pair in the sense of Olshanski \cite{Olshanski}.

Paper by Strahov \cite{StrahovSH}
 describes the construction of a family of spherical representations
 $T_{z,\frac{1}{2}}$, and shows that the $z$-measures with the Jack parameter $\theta=\frac{1}{2}$ is a coefficient in decomposition of the spherical functions of $T_{z,\frac{1}{2}}$ into irreducible components. The $z$-measures with the Jack parameters $\theta=\frac{1}{2}$, $2$ are studied in Borodin and Strahov \cite{BorodinStrahov}, Strahov \cite{StrahovMatrixKernels, StrahovSH, StrahovPfaffian}.

 \subsubsection{}Our Theorem \ref{THEOREMCENTRALMEASURES} describes all central measures on the space of $\mathfrak{S}_{G}$ of $G$-virtual permutations. Theorem \ref{THEOREMCENTRALMEASURES} can be understood as a generalization of Theorem 25.2.1 in Olshanski \cite{OlshanskiRandomPermutations}.
Theorem 25.2.1 in Ref. \cite{OlshanskiRandomPermutations} is a reformulation of the celebrated Kingman theorem \cite{Kingman2} on certain sequences of probability measures on partitions called partition structures. Our Theorem \ref{THEOREMCENTRALMEASURES} is closely related to Theorem 2.2 in Strahov \cite{StrahovMPS} on multiple partition structures.
%%%%%%%%%%%%%%%%%%%%%%%%%%%%%%%%%%%%%%%%%%%%%%%%%%%%%%%%%%%%%%%%%%%%%%%%%%%%%%%%%%%%%%%%%%%%%
%%%%%%%%%%%%%%%%%%%%%%%%%%%%%%%%%%%%%%%%%%%%%%%%%%%%%%%%%%%%%%%%%%%%%%%%%%%%%%%%%%%%%%%%%%%%%%%
%%%%%%%%%%%%%%%%%%%%%%%%%%%%%%%%%%%%%%%%%%%%%%%%%%%%%%%%%%%%%%%%%%%%%%%%%%%%%%%%%%%%%%%%%%%%%%%
%%%%%%%%%%%%%%%%%%%%%%%%%%%%%%%%%%%%%%%%%%%%%%%%%%%%%%%%%%%%%%%%%%%%%%%%%%%%%%%%%%%%%%%%%%%%%
%%%%%%%%%%%%%%%%%%%%%%%%%%%%%%%%%%%%%%%%%%%%%%%%%%%%%%%%%%%%%%%%%%%%%%%%%%%%%%%%%%%%%%%%%%%%%%%
%%%%%%%%%%%%%%%%%%%%%%%%%%%%%%%%%%%%%%%%%%%%%%%%%%%%%%%%%%%%%%%%%%%%%%%%%%%%%%%%%%%%%%%%%%%%%%%
%%%%%%%%%%%%%%%%%%%%%%%%%%%%%%%%%%%%%%%%%%%%%%%%%%%%%%%%%%%%%%%%%%%%%%%%%%%%%%%%%%%%%%%%%%%%%
%%%%%%%%%%%%%%%%%%%%%%%%%%%%%%%%%%%%%%%%%%%%%%%%%%%%%%%%%%%%%%%%%%%%%%%%%%%%%%%%%%%%%%%%%%%%%%%
%%%%%%%%%%%%%%%%%%%%%%%%%%%%%%%%%%%%%%%%%%%%%%%%%%%%%%%%%%%%%%%%%%%%%%%%%%%%%%%%%%%%%%%%%%%%%%%
\section{The space $\mathfrak{S}_{G}$ of G-virtual permutations}
\subsection{The wreath product $G_n=G\sim S(n)$}\label{SectionFiniteWreathProduct}
The material of this section is standard, see Macdonald \cite{Macdonald}, Appendix B.
Let $G$ be a finite group. Denote by $G_{\ast}$ the set of conjugacy classes in $G$.
We  assume that $G_{\ast}$ consists of $k$ conjugacy classes labelled $G_{\ast}=\left\{c_1,\ldots,c_k\right\}\footnote{The labeling of the conjugacy classes of $G$ plays a role in the theory.}$.
Let $S(n)$ be the symmetric group of degree $n$, i.e. the group of permutations of the finite set
$\left\{1,\ldots,n\right\}$.
The wreath product $G\sim S(n)$ is the group whose underlying set is
$$
G^n\times S(n)=\left\{\left(\left(g_1,\ldots,g_n\right),s\right):\;g_i\in G, s\in S(n)\right\}.
$$
The multiplication in $G\sim S(n)$ is defined by
$$
\left((g_1,\ldots,g_n),s\right)\left((h_1,\ldots,h_n),t\right)=
\left((g_1h_{s^{-1}(1)},\ldots,g_nh_{s^{-1}(n)}),st\right).
$$
When $n=1$, $G\sim S(1)$ is $G$. The number of elements in $G\sim S(n)$ is equal to $|G|^nn!$.
The elements of $G\sim S(n)$ can be thought of as permutation matrices with entries in $G$. Namely, the element
$
\left(\left(g_1,\ldots,g_n\right),s\right)
$
can be represented as
$
\left(g_i\delta_{i,s(j)}\right)_{i,j=1}^n.
$
In addition, the elements of $G\sim S(n)$ can be identified with bipartite graphs.
Namely, we associate with an element $\left(\left(g_1,\ldots,g_n\right),s\right)$  a graph with the vertex set $\left\{1,\ldots,n;g_1,\ldots,g_n\right\}$.
Its edges are couples of the form $\left(i,g_j\right)$, where $s(i)=j$, see Fig. \ref{Fig.1},
and we refer to $g_j$ as  the weight of the edge $\left(i,g_j\right)$.
 \begin{figure}[t]
%\begin{center}
{\scalebox{0.5}{\includegraphics{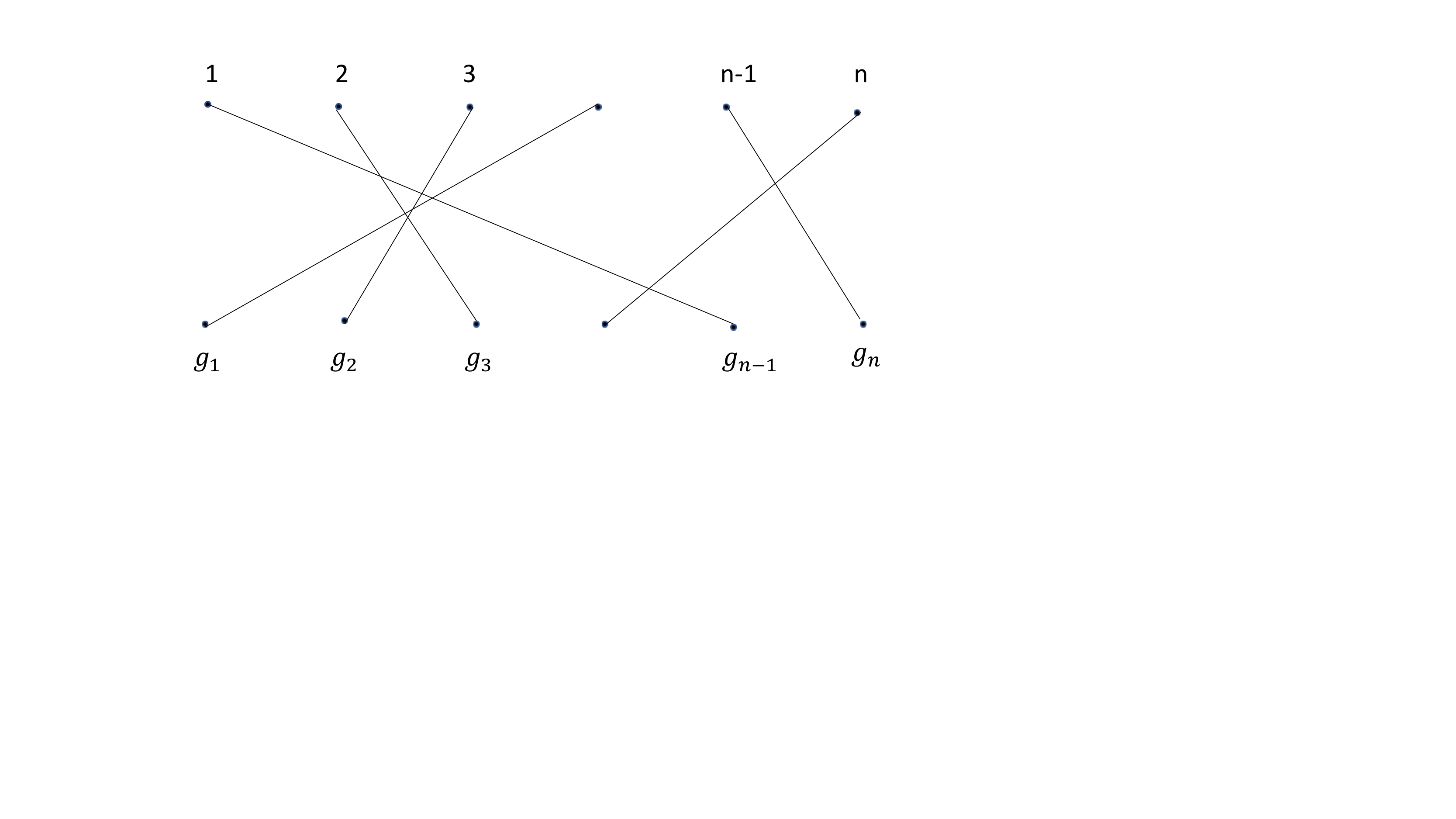}}}
%\end{center}
\caption{An element $\left(\left(g_1,\ldots,g_n\right),s\right)$ as a bipartite graph. The symbols $g_1$, $\ldots$, $g_n$ can be understood as the weights of the corresponding edges.}
\label{Fig.1}
\end{figure}

The bipartite graphs can be used to illustrate  multiplication of two elements
$\left(\left(g_1,\ldots,g_n\right),s\right)$ and $\left(\left(h_1,\ldots,h_n\right),t\right)$
of $G\sim S(n)$, see Fig. \ref{Fig.2}.  The list $\left(g_1h_{s^{-1}(1)},\ldots,g_nh_{s^{-1}(n)}\right)$ is obtained by  reading of the weights of the corresponding edges in the direction of the arrows. The element $st$ of $S(n)$ is obtained as in the usual graphical representation of multiplication of two elements of the symmetric group $S(n)$.
 \begin{figure}[t]
%\begin{center}
{\scalebox{0.5}{\includegraphics{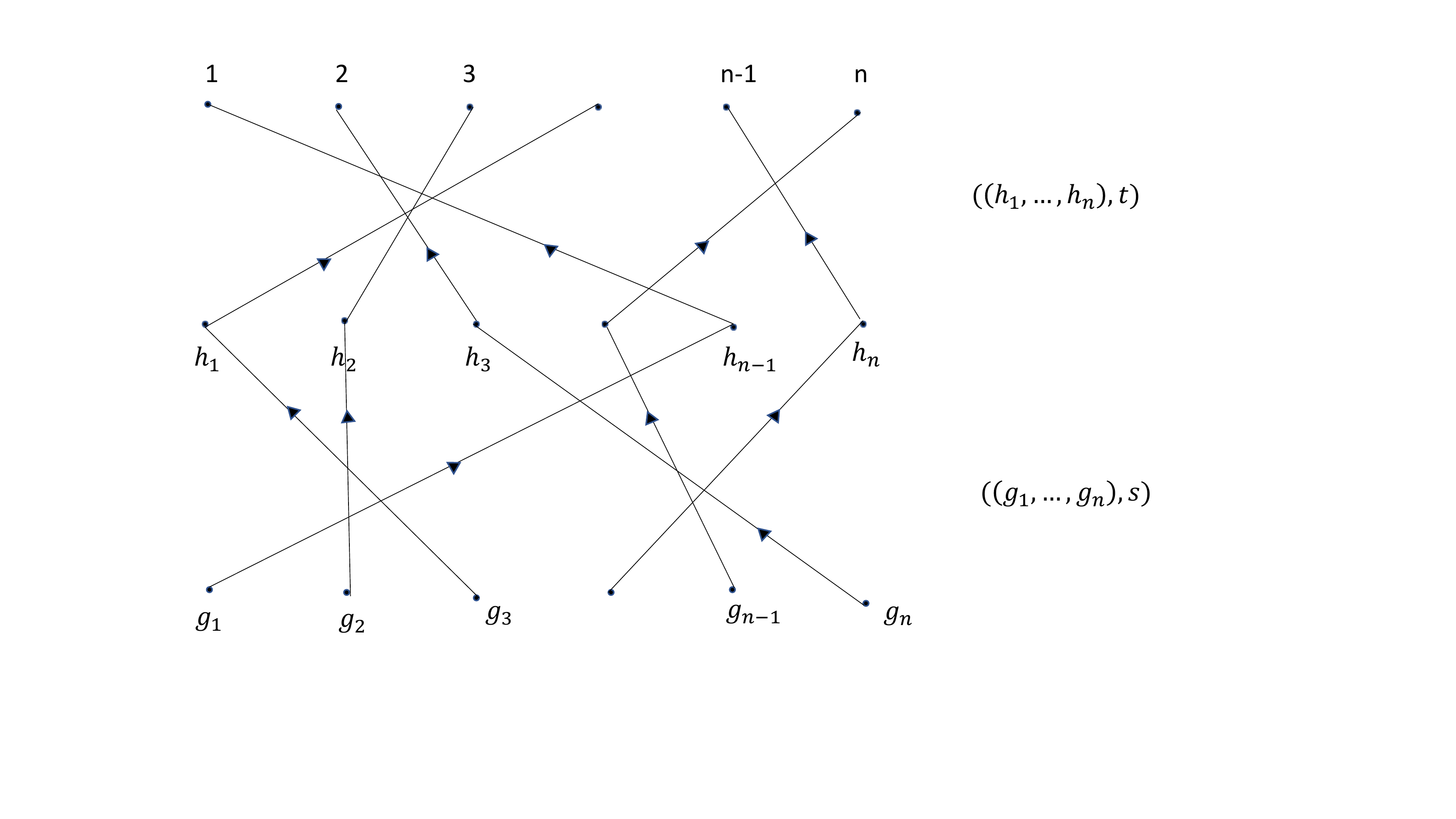}}}
%\end{center}
\caption{The multiplication of two group elements in terms of bipartite graphs.}
\label{Fig.2}
\end{figure}

Let
$$
x=\left(\left(g_1,\ldots,g_n\right),s\right)\in G\sim S(n).
$$
The permutation $s$ can be written as a product of disjoint cycles. If
$(i_1,i_2,\ldots,i_r)$ is one of these cycles, then the element
$g_{i_{r}}g_{i_{r-1}}\ldots g_{i_1}$ is called \textit{the cycle-product of $x$
corresponding to $(i_1,i_2,\ldots,i_r)$}.
A cycle $(i_1,\ldots,i_r)$  of $s$ is called \textit{of type $c_l$}
if the corresponding cycle-product of $x$  belongs to $c_l$.
If $c_l$ is a conjugacy class in $G$, then
we denote by $[x]_{c_l}$
the number of cycles of $s$ whose cycle-product of $x$ belongs to $c_l$.

It is well known that both the conjugacy classes and the irreducible representations of $G\sim S(n)$ are parameterized by \textit{multiple partitions} $\Lambda_n^{(k)}=\left(\lambda^{(1)},\ldots,\lambda^{(k)}\right)$,
where $k$ is the number of conjugacy classes in $G$, and where $|\lambda^{(1)}|+\ldots+|\lambda^{(k)}|=n$. In particular, if
$x=\left(\left(g_1,\ldots,g_n\right),s\right)$ belongs to the conjugacy class
$K_{\Lambda_n^{(k)}}$  of $G\sim S(n)$  parameterized by $\Lambda_n^{(k)}=\left(\lambda^{(1)},\ldots,\lambda^{(k)}\right)$, then
$$
\lambda^{(l)}=\left(1^{m_1^{(l)}(s)}2^{m_2^{(l)}(s)}\ldots n^{m_n^{(l)}(s)}\right),
$$
where $m_j^{(l)}(s)$ is equal to the number of $j$-cycles of type $c_l$ in $s$.

The number of elements in the conjugacy class $K_{\Lambda^{(k)}_n}$ of $G\sim S(n)$ parameterized by
$\Lambda_n^{(k)}=\left(\lambda^{(1)},\ldots,\lambda^{(k)}\right)$ is given by
\begin{equation}\label{TheSizeOfTheConjugacyClass}
\left|K_{\Lambda_n^{(k)}}\right|=\frac{n!|G|^n}{\prod\limits_{l=1}^k\prod\limits_{j=1}^nj^{r_j\left(\lambda^{(l)}\right)}
\left(r_j\left(\lambda^{(l)}\right)\right)!}\;\frac{1}{\prod\limits_{l=1}^k\zeta_{c_l}^{r_1\left(\lambda^{(l)}\right)+\ldots+r_n\left(\lambda^{(l)}\right)}},
\end{equation}
where $r_j\left(\lambda^{(l)}\right)$ denotes the number of rows of length $j$ in the Young diagram $\lambda^{(l)}$
(in particular, the sum $r_1\left(\lambda^{(l)}\right)+\ldots+r_n\left(\lambda^{(l)}\right)$ is equal to the total number of rows in $\lambda^{(l)}$), and
\begin{equation}\label{zetal}
\zeta_{c_l}=\frac{|G|}{|c_l|}.
\end{equation}
%%%%%%%%%%%%%%%%%%%%%%%%%%%%%%%%%%%%%%%%%%%%%%%%%%%%%%%%%%%%%%%%%%%%%%%%%%%%%%%%%%%%%%%%%%%%%
%%%%%%%%%%%%%%%%%%%%%%%%%%%%%%%%%%%%%%%%%%%%%%%%%%%%%%%%%%%%%%%%%%%%%%%%%%%%%%%%%%%%%%%%%%%%%%%
%%%%%%%%%%%%%%%%%%%%%%%%%%%%%%%%%%%%%%%%%%%%%%%%%%%%%%%%%%%%%%%%%%%%%%%%%%%%%%%%%%%%%%%%%%%%%%%
%%%%%%%%%%%%%%%%%%%%%%%%%%%%%%%%%%%%%%%%%%%%%%%%%%%%%%%%%%%%%%%%%%%%%%%%%%%%%%%%%%%%%%%%%%%%%
%%%%%%%%%%%%%%%%%%%%%%%%%%%%%%%%%%%%%%%%%%%%%%%%%%%%%%%%%%%%%%%%%%%%%%%%%%%%%%%%%%%%%%%%%%%%%%%
%%%%%%%%%%%%%%%%%%%%%%%%%%%%%%%%%%%%%%%%%%%%%%%%%%%%%%%%%%%%%%%%%%%%%%%%%%%%%%%%%%%%%%%%%%%%%%%
%%%%%%%%%%%%%%%%%%%%%%%%%%%%%%%%%%%%%%%%%%%%%%%%%%%%%%%%%%%%%%%%%%%%%%%%%%%%%%%%%%%%%%%%%%%%%
%%%%%%%%%%%%%%%%%%%%%%%%%%%%%%%%%%%%%%%%%%%%%%%%%%%%%%%%%%%%%%%%%%%%%%%%%%%%%%%%%%%%%%%%%%%%%%%
%%%%%%%%%%%%%%%%%%%%%%%%%%%%%%%%%%%%%%%%%%%%%%%%%%%%%%%%%%%%%%%%%%%%%%%%%%%%%%%%%%%%%%%%%%%%%%%
\subsection{The canonical projection}\label{SectionProjection}
Here we define the canonical projection,
$$
p_{n,n+1}: G\sim S(n+1)\longrightarrow G\sim S(n).
$$
Let $\tilde{x}=\left(\left(g_1,\ldots,g_{n+1}\right),\tilde{s}\right)$ be an element of $G\sim S(n+1)$.
Represent $\tilde{s}$ in terms of cycles. If $n+1$ is a fixed point of $\tilde{s}$, then we set
$s=\tilde{s}$, and $p_{n,n+1}\left(\tilde{x}\right)=\left(\left(g_1,\ldots,g_{n}\right), s\right)$.
If $n+1$ belongs to a cycle,
\begin{equation}\label{Cycle}
i_1\rightarrow\ldots\rightarrow i_m\rightarrow n+1\rightarrow i_{m+1}\rightarrow\ldots\rightarrow i_r,
\end{equation}
then we remove $n+1$ out of the cycle, and replace
$$
\tilde{g}=\left(g_1,\ldots,g_n,g_{n+1}\right)
$$
by
$$
g=\left(g_1,\ldots, g_{i_{m+1}}g_{n+1},\ldots,g_n\right).
$$
Thus $g$ is obtained from $\tilde{g}$ by  removing the $n+1$th element $g_{n+1}$ from $\tilde{g}$, and by
replacing the element $g_{i_{m+1}}$ of $\tilde{g}$ by $g_{i_{m+1}}g_{n+1}$. Note that the cycle-product of $\tilde{x}=\left(\left(g_1,\ldots,g_{n+1}\right),\tilde{s}\right)$ corresponding to the cycle (\ref{Cycle})
is $g_{i_r}\ldots g_{i_{m+1}}g_{n+1}g_{i_m}\ldots g_{i_1}$, and it is  the same as the cycle-product of the obtained element  $x$, $x=p_{n,n+1}\left(\tilde{x}\right)$, corresponding to the cycle
$$
i_1\rightarrow\ldots\rightarrow i_m\rightarrow i_{m+1}\rightarrow\ldots\rightarrow i_r.
$$
We conclude that if $x=\left(\left(g_1,\ldots,g_n\right),s\right)\in G_n$, and
$\tilde{x}=\left(\left(g_1,\ldots,g_{n+1}\right),\tilde{s}\right)\in G_{n+1}$ are such that $p_{n,n+1}(\tilde{x})=x$, then each cycle of $s$ is of the same type as that
of the corresponding cycle of $\tilde{s}$. In other words, the projection $p_{n,n+1}$ preserves the types of the cycles.

The projection $p_{n,n+1}: G\sim S(n+1)\longrightarrow G\sim S(n)$ can be understood as an operation on the bipartite graph representing
an element of $G\sim S(n+1)$. Namely, in order to describe the action of $p_{n,n+1}$ on $\left(\left(g_1,\ldots,g_{n+1}\right),\tilde{s}\right)$ take the graph of $\left(\left(g_1,\ldots,g_{n+1}\right),\tilde{s}\right)$ and
add an extra edge connecting the vertex $g_{n+1}$ and $n+1$.
If $\tilde{s}\in S(n+1)$ includes the cycle which can be written as (\ref{Cycle}), then the graph of  $p_{n,n+1}\left(\left(g_1,\ldots,g_{n+1}\right),\tilde{s}\right)$ is that whose edge coming from $g_{i_{m+1}}$ goes to $i_m$, and pass
through $g_{n+1}$. Thus we can say that the weight of this edge is equal to $g_{i_{m+1}}g_{n+1}$, see Fig. 3 for a specific example. If $n+1$ is a fixed point of $\tilde{s}$, we remove the edge connecting $g_{n+1}$ with $n+1$ from the graph.
\begin{figure}[t]
%\begin{center}
{\scalebox{0.5}{\includegraphics{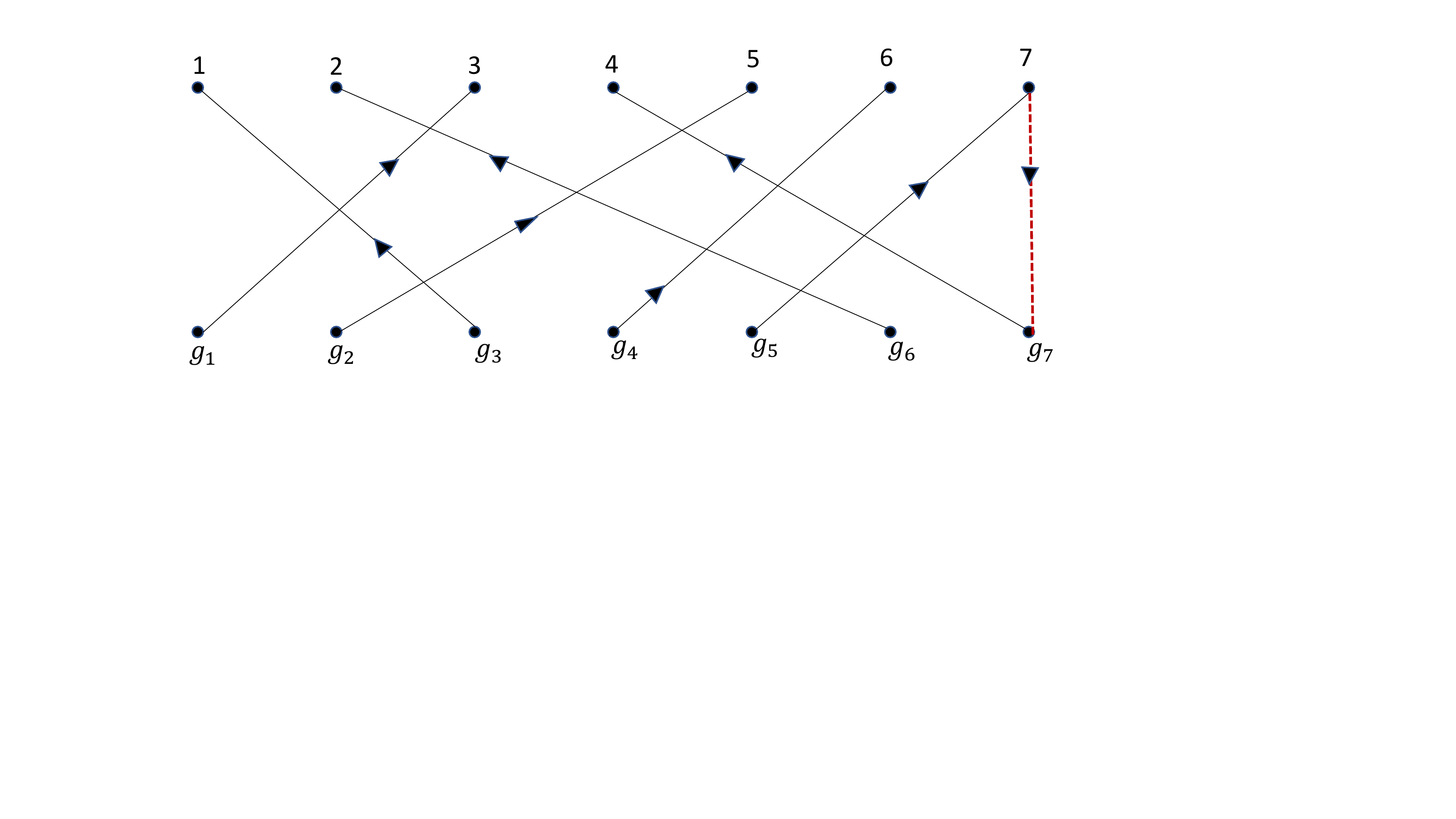}}}
%\end{center}
\caption{The definition of the canonical projection $p_{n,n+1}$. In this example $n=6$, and  the original element of $G\sim S(7)$ is
$\left(\left(g_1,g_2,g_3,g_4,g_5,g_6,g_7\right),(13)(26475)\right)$. The cycle including $n+1=7$ is
$2\rightarrow 6\rightarrow 4\rightarrow 7\rightarrow 5$, and $g_{n+1}=g_7$, $g_{i_m}=g_4$, $g_{i_{m+1}}=g_5$.
We add the extra edge (the red dashed line) connecting the vertices $7$ and $g_7$. As a result we obtain a graph with an edge connecting $g_5$ with $4$, and passing through $g_7$. The weight of this edge is $g_5g_7$. Thus we have
$$
p_{6,7}\left(\left(\left(g_1,g_2,g_3,g_4,g_5,g_6,g_7\right),(13)(26475)\right)\right)
=\left(\left(g_1,g_2,g_3,g_4,g_5g_7,g_6\right),(13)(2645)\right).
$$
}
\label{Fig.3}
\end{figure}
\begin{prop}\label{PROPOSITIONEQUIVARIANCEPROJECTION}The projection $p_{n,n+1}: G\sim S(n+1)\longrightarrow G\sim S(n)$ is equivariant with respect to the two-sided action of $G\sim S(n)$, i.e.
$$
p_{n,n+1}\left((\kappa,\pi)(g,s)(h,t)\right)=(\kappa,\pi)p_{n,n+1}\left((g,s)\right)(h,t),
$$
for each $(g,s)\in G\sim S(n+1)$, and each $(\kappa,\pi)\in G\sim S(n)$, $(h,t)\in G\sim S(n)$.
\end{prop}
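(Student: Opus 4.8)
The plan is to verify the identity by reducing to a small set of generators and then checking each case directly from the multiplication rule of Section~\ref{SectionFiniteWreathProduct}. First I would record that the two-sided action is taken through the standard embedding $G\sim S(n)\hookrightarrow G\sim S(n+1)$ sending $\left((\kappa_1,\ldots,\kappa_n),\pi\right)$ to $\left((\kappa_1,\ldots,\kappa_n,e),\pi\right)$, with $\pi$ extended so as to fix $n+1$; the crucial feature of this embedding is that the $(n+1)$-st $G$-coordinate of an embedded element is always the unit $e$. Since $G\sim S(n)=G^n\rtimes S(n)$, and since the left and right multiplication operators $L_{(\kappa,\pi)}$ and $R_{(h,t)}$ commute by associativity, it suffices to prove $p_{n,n+1}\circ L_a=L_a\circ p_{n,n+1}$ and $p_{n,n+1}\circ R_b=R_b\circ p_{n,n+1}$ for $a,b$ ranging over the generators $\left\{(\mathbf{e},\pi):\pi\in S(n)\right\}$ and $\left\{(\kappa,\mathrm{id}):\kappa\in G^n\right\}$. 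Equivariance for an arbitrary pair $(\kappa,\pi)$, $(h,t)$ then follows by composing these commutation relations, the intertwining property extending from generators to the whole group because it is preserved under products.

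Next I would fix the explicit description of $p_{n,n+1}$ that feeds the computation. Writing $\tilde x=\left((g_1,\ldots,g_{n+1}),\tilde s\right)$ and $b=\tilde s(n+1)$, the projected permutation $s=p_{n,n+1}(\tilde s)$ satisfies $s(i)=\tilde s(i)$ unless $\tilde s(i)=n+1$, in which case $s(i)=b$; equivalently $s^{-1}(b)=\tilde s^{-1}(n+1)$ while $s^{-1}(j)=\tilde s^{-1}(j)$ for every other $j\le n$. The projected weights are $g'_i=g_i$ for $i\ne b$ and $g'_b=g_bg_{n+1}$ (if $n+1$ is a fixed point of $\tilde s$ then $b=n+1$ and $g'_i=g_i$ for all $i\le n$). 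A preliminary observation I would isolate is that the fixed-point status of $n+1$ is preserved by the action: since $\pi,t$ fix $n+1$, one has $(\pi\tilde s t)(n+1)=\pi(\tilde s(n+1))$, which equals $n+1$ precisely when $\tilde s(n+1)=n+1$; hence no case-crossing occurs, and the permutation-part identity $p_{n,n+1}(\pi\tilde s t)=\pi\,p_{n,n+1}(\tilde s)\,t$ reduces to the classical symmetric-group statement and is checked at once from the displayed formula for $s$.

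The substance is then the bookkeeping of the $G$-weights in the four generator cases. For left multiplication by $(\mathbf{e},\pi)$ the special position becomes $\pi(b)$ and the weight there is $g_{\pi^{-1}(\pi(b))}g_{n+1}=g_bg_{n+1}$, matching the result of projecting first; for left multiplication by $(\kappa,\mathrm{id})$ the weight at $b$ becomes $\kappa_b g_bg_{n+1}$ on both sides, because left multiplication appends $\kappa_b$ on the \emph{left} and so does not interfere with the right-hand factor $g_{n+1}$. The one genuinely delicate case is right multiplication by $(h,\mathrm{id})$, where the incoming $h$-weight and the merged factor $g_{n+1}$ both attach on the right and could a priori collide. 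Here I would compute that in $\tilde x\,(h,\mathrm{id})$ the $h$-factor entering coordinate $b$ is $h_{\tilde s^{-1}(b)}=h_{n+1}=e$, so no $h$ attaches directly at $b$, while coordinate $n+1$ receives $g_{n+1}h_{\tilde s^{-1}(n+1)}=g_{n+1}h_{i_m}$ with $i_m=\tilde s^{-1}(n+1)$; after projection the position $b$ carries $g_b\,g_{n+1}h_{i_m}$. Projecting first and then multiplying gives $g'_b\,h_{s^{-1}(b)}=g_bg_{n+1}h_{i_m}$, using $s^{-1}(b)=\tilde s^{-1}(n+1)=i_m$, and the two agree. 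I expect this right-$G^n$ case to be the main obstacle, and it works out exactly because the embedding forces $h_{n+1}=e$, so that the $h$-contribution to the merged position is routed through coordinate $n+1$ in precise agreement with the rule ``absorb $g_{n+1}$ on the right at position $\tilde s(n+1)$''. Assembling the four cases via the reduction of the first paragraph completes the proof.
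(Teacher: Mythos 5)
Your proof is correct, but it is not the paper's argument: the paper disposes of this proposition in a single sentence, appealing to the bipartite-graph description of the projection from Section~\ref{SectionProjection} --- multiplication of group elements is concatenation of graphs, the projection is the addition of the extra edge joining $n+1$ to $g_{n+1}$ (Fig.~\ref{Fig.3}), and since elements of $G\sim S(n)$ embedded in $G\sim S(n+1)$ carry the trivial strand at $n+1$, the two operations visibly commute. You instead give a fully explicit coordinate computation: you reduce to the generators $(\mathbf{e},\pi)$ and $(\kappa,\mathrm{id})$ of the semidirect product, recast the cycle description of $p_{n,n+1}$ as the closed formula $b=\tilde{s}(n+1)$, $s^{-1}(b)=\tilde{s}^{-1}(n+1)$, $g'_b=g_b g_{n+1}$ (this matches the paper's definition and its worked example $p_{6,7}$ in Fig.~\ref{Fig.3}, where $b=5$ and $g'_5=g_5g_7$), and verify each case from the multiplication rule. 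Your diagnosis of where the content lies is accurate: the only case with a potential collision is right multiplication by $(h,\mathrm{id})$, and it is rescued precisely by $h_{n+1}=e_G$, which is forced by the canonical inclusion; your check $g'_b\,h_{s^{-1}(b)}=g_bg_{n+1}h_{i_m}$ on both sides is right, as are the left cases. What each approach buys: the paper's graphical proof is short and conceptually transparent, but it delegates all verification to the picture; yours is longer but completely checkable and isolates the one algebraic fact the picture encodes (the unit weight on the $(n+1)$st strand of embedded elements). One small completeness remark: of your announced four generator cases you write out only three; the fourth, right multiplication by $(\mathbf{e},t)$, should be recorded, though it is immediate --- the weights are multiplied by units and hence unchanged, the special position $b=(\tilde{s}t)(n+1)=\tilde{s}(n+1)$ is unchanged, and the permutation part is exactly the classical symmetric-group identity you already stated.
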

\begin{proof}The equivariance of $p_{n,n+1}$ follows from the description of the projection in terms of the corresponding bipartite graph.
\end{proof}
%%%%%%%%%%%%%%%%%%%%%%%%%%%%%%%%%%%%%%%%%%%%%%%%%%%%
%%%%%%%%%%%%%%%%%%%%%%%%%%%%%%%%%%%%%%%%%%%%%%%%%%%%
%%%%%%%%%%%%%%%%%%%%%%%%%%%%%%%%%%%%%%%%%%%%%%%%%%%%
%%%%%%%%%%%%%%%%%%%%%%%%%%%%%%%%%%%%%%%%%%%%%%%%%%%%
%%%%%%%%%%%%%%%%%%%%%%%%%%%%%%%%%%%%%%%%%%%%%%%%%%%%
%%%%%%%%%%%%%%%%%%%%%%%%%%%%%%%%%%%%%%%%%%%%%%%%%%%%
\subsection{Definition of $\mathfrak{S}_{G}$}
Recall the definition of the space of virtual permutations $\mathfrak{S}$ introduced in Kerov, Olshanski, and Vershik \cite{KerovOlshanskiVershik}, \S 2. If
$$
S(1)\longleftarrow\ldots\longleftarrow S(n)\longleftarrow S(n+1)\longleftarrow\ldots
$$
is a sequence of canonical projections for symmetric groups, then the space of virtual permutations $\mathfrak{S}$ is the projective limit, $\mathfrak{S}=\underset{\longleftarrow}{\lim}\;S(n)$. Note that $\mathfrak{S}$ is a profinite group, and correspondingly,
it is a compact toplogical space.

Here we introduce an analogue of the space of virtual permutations $\mathfrak{S}$ starting from
a sequence of canonical projections for wreath products. Set
$$
G_n=G\sim S(n)
$$
and consider the sequence of canonical projections
$$
G_1\longleftarrow\ldots\longleftarrow G_n\longleftarrow G_{n+1}\longleftarrow\ldots .
$$
Let
$$
\mathfrak{S}_{G}=\underset{\longleftarrow}{\lim}\;G_n
$$
denote the projective limit of the sets $G_n$. By definition, the elements of $\mathfrak{S}_{G}$ are sequences
$$
x=\left(x_1,x_2,\ldots\right)
$$
such that $x_n\in G_n$, and $p_{n,n+1}\left(x_{n+1}\right)=x_n$ for all $n=1,2,\ldots$, where
$$
p_{n,n+1}: G_{n+1}\longrightarrow G_n
$$
is the canonical projection introduced in section \ref{SectionProjection}. The space $\mathfrak{S}_{G}$ is called the \textit{space of $G$-virtual permutations}.
%%%%%%%%%%%%%%%%%%%%%%%%%%%%%%%%%%%%%%%%%%%%%%%%%%%%%%%%%%%%%%%%%%
%%%%%%%%%%%%%%%%%%%%%%%%%%%%%%%%%%%%%%%%%%%%%%%%%%%%%%%%%%%%%%%%%%%
%%%%%%%%%%%%%%%%%%%%%%%%%%%%%%%%%%%%%%%%%%%%%%%%%%%%%%%%%%%%%%%%%%
%%%%%%%%%%%%%%%%%%%%%%%%%%%%%%%%%%%%%%%%%%%%%%%%%%%%%%%%%%%%%%%%%%%
%%%%%%%%%%%%%%%%%%%%%%%%%%%%%%%%%%%%%%%%%%%%%%%%%%%%%%%%%%%%%%%%%%
%%%%%%%%%%%%%%%%%%%%%%%%%%%%%%%%%%%%%%%%%%%%%%%%%%%%%%%%%%%%%%%%%%%
%%%%%%%%%%%%%%%%%%%%%%%%%%%%%%%%%%%%%%%%%%%%%%%%%%%%%%%%%%%%%%%%%%
%%%%%%%%%%%%%%%%%%%%%%%%%%%%%%%%%%%%%%%%%%%%%%%%%%%%%%%%%%%%%%%%%%%
\section{Central measures on $\mathfrak{S}_{G}$}
\subsection{The wreath product $G_{\infty}=G\sim S(\infty)$}\label{SubSectionWreathInfty}
Recall that $S(\infty)$ is the group of finite permutations of the set $\left\{1,2,\ldots\right\}$, and $G$ is a finite group.
Denote by $D_{\infty}\left(G\right)$ the restricted direct product of $G$, i.e.
$$
D_{\infty}\left(G\right)=\left\{g=\left(g_1,g_2,\ldots\right)\in G^{\infty}:\;g_j=e_{G}\;\mbox{except  finitely many j's}
\right\}.
$$
Here $e_{G}$ denotes the unit element of $G$. The infinite symmetric group $S(\infty)$ acts on
$D_{\infty}\left(G\right)$ according to the formula
\begin{equation}\label{3Action}
s(g)=\left(g_{s^{-1}(1)},g_{s^{-1}(2)},\ldots \right),\; s\in S(\infty),\; g\in D_{\infty}\left(G\right).
\end{equation}
\begin{defn}
The wreath product $G_{\infty}=G\sim S(\infty)$ of a finite group $G$ with the infinite symmetric group $S(\infty)$ is the semidirect product of $D_{\infty}\left(G\right)$ with $S(\infty)$ defined by action (\ref{3Action}). The underlying set of $G_{\infty}$ is $D_{\infty}\left(G\right)\times S(\infty)$, with multiplication defined by
$$
\left(g,s\right)\left(h,t\right)=\left(gs(h),st\right),
$$
where $g, h\in D_{\infty}\left(G\right)$, and $s, t\in S(\infty)$.
\end{defn}
Under the canonical inclusion $i_n:\; G_{n}\longrightarrow G_{\infty}$ we can regard $G_{\infty}$ as $\bigcup_{n=1}^{\infty}G_n$. Also, the group $G_{\infty}$ can be identified with the subset of $\mathfrak{S}_{G}$ consisting of  the stable sequences $\left(x_n\right)$ such that
$$
x_n=\left(g,s\right), \;\; \left(g,s\right)\in D_{\infty}\left(G\right)\times S(\infty),
$$
for sufficiently large $n$.

Let $W=\left(w_1,w_2\right)\in G_{\infty}\times G_{\infty}$. The right action of $G_{\infty}\times G_{\infty}$ on $\mathfrak{S}_{G}$ is defined as
$$
xW=y,\; x=\left(x_1,x_2,\ldots\right),\;y=\left(y_1,y_2,\ldots\right),
$$
where $y_n=w_2^{-1}x_nw_1$ for all large enough $n$. Specifically, the equality just written above holds whenever $n$ is so large that both $w_1$, $w_2$ are already in $G_n$. Note that
the action of $G_{\infty}\times G_{\infty}$ on $\mathfrak{S}_{G}$ preserves the topology.
\subsection{Central measures}
Let $P^{(n)}$ be a probability measure on $G_n=G\sim S(n)$. The measure $P^{(n)}$ is called central if it is invariant under the conjugation of $G_n$. Likewise, we say that a probability measure $P$ on $\mathfrak{S}_{G}$ is central if it is invariant under the conjugation of $G_{\infty}=G\sim S(\infty)$, i.e. if it is invariant under the action of
the diagonal subgroup $\diag\left(G_{\infty}\right)$ of $G_{\infty}\times G_{\infty}$
on $\mathfrak{S}_{G}$, see Section \ref{SubSectionWreathInfty}.

By the classical Kolmogorov theorem, any family $\left(P^{(n)}\right)_{n=1}^{\infty}$ of probability measures on the groups $G_n=G\sim S(n)$ consistent with respect to the canonical projection $p_{n,n+1}:\; G_{n+1}\longrightarrow G_n$ gives rise to a probability measure $\mathcal{P}=\underset{\longleftarrow}{\lim}P^{(n)}$
on the space $\mathfrak{S}_{G}$. If $P^{(n)}$ is central for each $n$, then it is not hard to see that
$\mathcal{P}=\underset{\longleftarrow}{\lim}P^{(n)}$ is a central probability measure on $\mathfrak{S}_{G}$.
Conversely, each central probability measure $\mathcal{P}$ on $\mathfrak{S}_{G}$ can be represented as a projective limit $\mathcal{P}=\underset{\longleftarrow}{\lim}P^{(n)}$, where $P^{(n)}$ is a central
probability measure on $G_n=G\sim S(n)$.
\begin{thm}\label{THEOREMCENTRALMEASURES} There exists a one-to-one correspondence $\mathcal{P}\longleftrightarrow\Pi$ between arbitrary central probability measures $\mathcal{P}$ on the space $\mathfrak{S}_G$, and arbitrary probability measures $\Pi$
on the space
\begin{equation}\label{SetNabla}
\begin{split}
\overline{\nabla}^{(k)}=&\biggl\{(x,\delta)\biggr|
x=\left(x^{(1)},\ldots,x^{(k)}\right),\;
\delta=\left(\delta^{(1)},\ldots,\delta^{(k)}\right);\\
&x^{(l)}=\left(x^{(l)}_1\geq x^{(l)}_2\geq\ldots\geq 0\right),\;\delta^{(l)}\geq 0,\;
1\leq l\leq k,\\
&\mbox{where}\;\;\sum\limits_{i=1}^{\infty}x_i^{(l)}\leq\delta^{(l)},\; 1\leq l\leq k,\;\mbox{and}\;\sum\limits_{l=1}^k\delta^{(l)}=1\biggl\}.
\end{split}
\end{equation}
Here $k=\left|G_{\ast}\right|$, where $G_{\ast}$ denotes the set of conjugacy classes in $G$.
\end{thm}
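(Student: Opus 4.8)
The plan is to reduce the statement to a colored version of Kingman's theorem on partition structures, identifying the combinatorial content of a central measure with an exchangeable random partition of $\mathbb{N}$ whose blocks carry one of $k$ colors. First I would pass from $\mathfrak{S}_G$ to its finite approximations. A central probability measure $P^{(n)}$ on $G_n$ is constant on conjugacy classes, hence is determined by a probability measure $M^{(n)}$ on the set $\Y_n^{(k)}$ of multiple partitions of $n$, which index those classes. As recorded just before the theorem, every central $\mathcal{P}$ on $\mathfrak{S}_G$ is the projective limit of central measures $P^{(n)}$; by the equivariance of $p_{n,n+1}$ (Proposition \ref{PROPOSITIONEQUIVARIANCEPROJECTION}) the pushforward of a central measure is again central, so the consistency condition becomes a coherence relation between $M^{(n+1)}$ and $M^{(n)}$ for a single Markov kernel $q_{n+1,n}$ from $\Y_{n+1}^{(k)}$ to $\Y_n^{(k)}$. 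Thus central measures on $\mathfrak{S}_G$ are in bijection with coherent families $(M^{(n)})_{n\ge1}$ for this kernel.

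Second I would compute $q_{n+1,n}$ explicitly. For a uniform element of the conjugacy class $K_{\tilde\Lambda}$, conjugation by $S(n+1)\subset G_{n+1}$ symmetrizes the positions, so the label $n+1$ occupies a cycle of length $j$ and type $c_l$ with probability $j\,r_j(\tilde\lambda^{(l)})/(n+1)$. Since $p_{n,n+1}$ removes $n+1$ and, by the cycle-product computation of Section \ref{SectionProjection}, preserves the type of the affected cycle, the image lies in the class obtained from $\tilde\Lambda$ by deleting one box from a length-$j$ row of the color-$l$ diagram. Hence
\[
q_{n+1,n}(\tilde\Lambda,\Lambda)=\frac{j\,r_j(\tilde\lambda^{(l)})}{n+1}
\]
whenever $\Lambda$ arises from $\tilde\Lambda$ by such a move (and $q_{n+1,n}=0$ otherwise). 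This is exactly the color-wise Kingman down-kernel: it is the law of the multiple partition obtained from $\tilde\Lambda$ by removing one ball chosen uniformly among all $n+1$ balls, each ball keeping its color. Consequently a coherent family $(M^{(n)})$ is the same data as a consistent family of distributions of colored partitions of $\{1,\dots,n\}$, i.e. an exchangeable random partition of $\mathbb{N}$ into colored blocks.

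Third I would invoke the paintbox representation in its colored form (the multiple partition structures of Strahov \cite{StrahovMPS}, Theorem 2.2, specializing Olshanski's reformulation \cite{OlshanskiRandomPermutations} of Kingman's theorem \cite{Kingman2}). To a point $(x,\delta)\in\overline{\nabla}^{(k)}$ one associates the paintbox on $[0,1]$ having, for each color $l$, disjoint intervals of lengths $x_1^{(l)},x_2^{(l)},\dots$ together with a color-$l$ ``dust'' region of length $\delta^{(l)}-\sum_i x_i^{(l)}\ge 0$; the total length is $\sum_l\delta^{(l)}=1$. Dropping i.i.d. uniform points and recording both coincidences and colors yields an exchangeable colored partition whose block frequencies are the $x_i^{(l)}$ and whose color frequencies are the $\delta^{(l)}$. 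The forward map $\Pi\mapsto\mathcal{P}$ integrates these sampling distributions against $\Pi$ and lifts the resulting coherent family to $\mathfrak{S}_G$, coherence being the built-in consistency of sampling. For the inverse map I would, given $\mathcal{P}$, read off the asymptotic frequencies $x_i^{(l)}=\lim_n \lambda_i^{(l)}(n)/n$ and color masses $\delta^{(l)}=\lim_n|\lambda^{(l)}(n)|/n$, which exist almost surely by a reverse-martingale and law-of-large-numbers argument in the exchangeable picture and automatically satisfy the constraints defining $\overline{\nabla}^{(k)}$; then $\Pi$ is the law of $(x,\delta)$.

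The step I expect to be the main obstacle is the bijectivity of the inverse map, equivalently the ergodic decomposition asserting that the extreme coherent families are precisely the paintbox processes. Concretely, one must prove that the conditional law of the colored partition given its asymptotic frequencies $(x,\delta)$ is exactly the paintbox attached to $(x,\delta)$; this is the heart of Kingman's theorem and, in the present colored setting, requires controlling the interaction between the ordering of block frequencies within each color and the simplex constraint $\sum_l\delta^{(l)}=1$ on the color masses. Once this conditional-paintbox identity is in place, uniqueness of $\Pi$ follows since the frequencies are measurable functionals of the partition, and the two maps are mutually inverse, which completes the asserted correspondence.
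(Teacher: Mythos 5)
Your proposal follows essentially the same route as the paper: pass from a central measure $\mathcal{P}$ to a consistent family of central measures $P^{(n)}$ on $G_n$, identify the induced consistency condition on $\Y_n^{(k)}$ with the Kingman-type down-kernel $\frac{1}{n+1}L^{(l)}r_{L^{(l)}}\left(\mu^{(l)}\right)$ defining a multiple partition structure, and conclude by citing Theorem 2.2 of Strahov \cite{StrahovMPS}. The only minor difference is that you obtain the kernel by an exchangeability argument (conjugating by $S(n+1)$ and using that $p_{n,n+1}$ preserves cycle types) where the paper explicitly counts preimages $\upsilon\left(\tLambda_{n+1}^{(k)},\Lambda_n^{(k)}\right)$ and uses the conjugacy-class size formula (\ref{TheSizeOfTheConjugacyClass}); the paintbox/ergodic-decomposition step you flag as the main obstacle is precisely the content of the cited theorem, which the paper likewise invokes rather than reproves.
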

Namely, each  central probability measure $\mathcal{P}$ on $\mathfrak{S}_{G}$
can be represented
as a projective limit measure,
$
\mathcal{P}=\underset{\longleftarrow}{\lim}P^{(n)},
$
where each  $P^{(n)}$ is a central probability measure on $G_n$. The measures
$P^{(n)}$ are given by  integral representations over $\overline{\nabla}^{(k)}$,
\begin{equation}\label{Dop2}
P^{(n)}\left(\Lambda_n^{(k)}\right)=\frac{1}{\left|K_{\Lambda_n^{(k)}}\right|}
\int\limits_{\overline{\nabla}^{(k)}}\mathbb{K}\left(\Lambda_n^{(k)},\omega\right)\Pi(d\omega),
\end{equation}
where $\mathbb{K}\left(\Lambda_n^{(k)},\omega\right)$ can be expressed in terms
of extended monomial symmetric functions, see Theorem 2.2 of Ref. \cite{StrahovMPS},
$\Pi$ is a probability measure on $\overline{\nabla}^{(k)}$, and $\left|K_{\Lambda_n^{(k)}}\right|$ is the number of elements in the conjugacy class
$K_{\Lambda_n^{(k)}}$ of $G_n$ given by equation (\ref{TheSizeOfTheConjugacyClass}).
Conversely, each probability measure  $\Pi$ on $\overline{\nabla}^{(k)}$ gives rise
to a central probability measure
$
\mathcal{P}=\underset{\longleftarrow}{\lim}P^{(n)},
$
via equation (\ref{Dop2}).

Let us describe in several words the idea of the proof of Theorem \ref{THEOREMCENTRALMEASURES}.  Since the conjugacy classes of $G_n$ are parameterized by multiple partitions, each central measure $P^{(n)}$ gives rise to a probability measure $\mathcal{M}_n^{(k)}$ on $\Y_n^{(k)}$, the set of multiple partitions of $n$ into $k$ components. The consistency of the family $\left(P^{(n)}\right)_{n=1}^{\infty}$ with canonical projection $p_{n,n+1}:\; G_{n+1}\longrightarrow G_n$ will imply that the family $\mathcal{M}_n^{(k)}$ is a \textit{multiple partition structure} in the sense of Strahov \cite{StrahovMPS}. Then we will use  Theorem 2.2 from Strahov \cite{StrahovMPS} which establishes a bijective correspondence between multiple partition structures  $\left(\mathcal{M}_n^{(k)}\right)_{n=1}^{\infty}$ and probability measures on the space
$\overline{\nabla}^{(k)}$.

The  proof of Theorem \ref{THEOREMCENTRALMEASURES} is given in Section \ref{SUBSECTIONPR}. In the next Section we recall the definition of multiple partition structures, and present  their relevant properties.
%%%%%%%%%%%%%%%%%%%%%%%%%%%%%%%%%%%%%%%%%%%%%%%%%%%%%%%%%%%%%%%%%%%
%%%%%%%%%%%%%%%%%%%%%%%%%%%%%%%%%%%%%%%%%%%%%%%%%%%%%%%%%%%%%%%%%%
\subsection{Multiple partition structures}\label{SubSectionMPS}
%%%%%%%%%%%%%%%%%%%%%%%%%%%%%%%%%%%%%%%%%%%%%%%%%%%%%%%%%%%%%%%%%%%
%%%%%%%%%%%%%%%%%%%%%%%%%%%%%%%%%%%%%%%%%%%%%%%%%%%%%%%%%%%%%%%%%%
It is convenient to identify multiple partitions with configuration of balls partitioned into boxes of different types.  Namely, suppose that a sample of $n$ identical balls is partitioned into boxes of $k$ different types.
Denote by $A_i^{(l)}$ the number of boxes of type $l$
containing precisely $i$ balls, where  $l\in\{1,\ldots,k\}$ and $i\in\{1,\ldots,n\}$.
Each list $\left(A_1^{(l)},\ldots,A_n^{(l)}\right)$ can be identified with a Young diagram
$\lambda^{(l)}$ according to the rule
$$
A_i^{(l)}=\sharp\;\mbox{of rows of size}\;i\;\;\mbox{in}\;\; \lambda^{(l)}.
$$
We write
\begin{equation}\label{SvyazDiagramaA}
\lambda^{(l)}=\left(1^{A_1^{(l)}}2^{A_2^{(l)}}\ldots n^{A_n^{(l)}}\right),\;\; 1\leq l\leq k,
\end{equation}
and form the family $\Lambda_n^{(k)}=\left(\lambda^{(1)},\ldots,\lambda^{(k)}\right)$.
It is not hard to check that $|\lambda^{(1)}|+\ldots+|\lambda^{(k)}|=n$, i.e. $\Lambda_n^{(k)}$
is a multiple partition of $n$ into $k$ components. Conversely, let $\Lambda_n^{(k)}=\left(\lambda^{(1)},\ldots,\lambda^{(k)}\right)$
be a multiple partition of $n$ into $k$ components.  Given $\lambda^{(l)}$ define $A_1^{(l)}$, $A_2^{(l)}$, $\ldots$, $A_n^{(l)}$
by formula (\ref{SvyazDiagramaA}) which means that exactly $A_i^{(l)}$ of the rows of $\lambda^{(l)}$
are equal to $i$. Then refer to $A_i^{(l)}$ as to the number of those boxes of type $l$ that
contain precisely $i$ balls.
Thus each $\Lambda_n^{(k)}$ corresponds to a configuration of $n$ balls partitioned into boxes
of $k$ different types and vice versa.

A \textit{random multiple partition} of $n$ with $k$ components is a   random variable
$\Lambda_n^{(k)}$ with values in the set $\Y_n^{(k)}$ defined by
\begin{equation}\label{SetYn}
\Y_n^{(k)}=\left\{\left(\lambda^{(1)},\ldots,\lambda^{(k)}\right):\;
|\lambda^{(1)}|+\ldots+|\lambda^{(k)}|=n\right\}.
\end{equation}
The set $\Y_n^{(k)}$ is called the set of multiple partitions of $n$
into $k$ components.
\begin{defn}\label{DEFINITIONMULTIPLEPARTITIONSTRUCTURE}
A multiple partition structure is a sequence $\mathcal{M}_1^{(k)}$, $\mathcal{M}_2^{(k)}$, $\ldots$ of distributions for $\Lambda_1^{(k)}$, $\Lambda_2^{(k)}$, $\ldots$ which is consistent in the following sense: if $n$ balls are partitioned into boxes of $k$ different types such that their configuration is $\Lambda_n^{(k)}$, and a ball is deleted uniformly at random, independently of $\Lambda_n^{(k)}$, then the multiple partition $\Lambda_{n-1}^{(k)}$ describing the configuration of the remaining balls is distributed according to $\mathcal{M}^{(k)}_{n-1}$.
\end{defn}

If $k=1$ then a multiple partition structure is a partition structure in the sense of Kingman \cite{Kingman1}.

It is shown in Strahov \cite{StrahovMPS}, Section 8.1, that the sequence
$\left(\mathcal{M}_n^{(k)}\right)_{n=1}^{\infty}$ is a multiple partition structure if and only if the consistency condition
\begin{equation}\label{ConsistencyConditionMPS}
\mathcal{M}_n^{(k)}\left(\Lambda_n^{(k)}\right)=
\sum\limits_{\tLambda_{n+1}^{(k)}\in\Y_{n+1}^{(k)}}
\Prob\left(\Lambda_n^{(k)}|\tLambda_{n+1}^{(k)}\right)\mathcal{M}_{n+1}^{(k)}\left(\tLambda_{n+1}^{(k)}\right),\;\forall\Lambda_n^{(k)}\in\Y_n^{(k)},\;\forall n=1,2,\ldots
\end{equation}
is satisfied. The Markov transition kernel in equation (\ref{ConsistencyConditionMPS}) can be written explicitly. Indeed, assume that  $\tLambda_{n+1}^{(k)}=\left(\mu^{(1)},\ldots,\mu^{(k)}\right)$, and $\Lambda_n^{(k)}=\left(\lambda^{(1)},\ldots,\lambda^{(k)}\right)$. Then the number
$\Prob\left(\Lambda_n^{(k)}|\tLambda_{n+1}^{(k)}\right)$ is not equal to zero only if
\begin{equation}\label{CTCM}
\mu^{(l)}\searrow\lambda^{(l)},\;\mbox{and}\;
\mu^{(i)}=\lambda^{(i)}\;
\mbox{for all}\; i\in\left\{1,\ldots,k\right\}, i\neq l
\end{equation}
is satisfied for some $l$, $l\in\left\{1,\ldots,k\right\}$.
The notation $\mu^{(l)}\searrow\lambda^{(l)}$ means that
$\mu^{(l)}$ is obtained from $\lambda^{(l)}$ by adding a box to some row of $\lambda^{(l)}$ of size $L^{(l)}-1$, $L^{(l)}\geq 1$, and we have
\begin{equation}
\Prob\left(\Lambda_n^{(k)}|\tLambda_{n+1}^{(k)}\right)=\frac{1}{n+1}r_{L^{(l)}}\left(\mu^{(l)}\right)L^{(l)}
\end{equation}
see Strahov \cite{StrahovMPS}, Section 8.1,
Here $r_{L^{(l)}}\left(\mu^{(l)}\right)$ is the number of rows of size $L^{(l)}$ in the Young diagram $\mu^{(l)}$.

\subsection{Proof of Theorem \ref{THEOREMCENTRALMEASURES}}\label{SUBSECTIONPR}
Let $\mathcal{P}$ be a central measure on $\mathfrak{S}_{G}$. Represent $\mathcal{P}$ as a projective measure, $\mathcal{P}=\underset{\longleftarrow}{\lim}P^{(n)}$. Then each $P^{(n)}$ is a central probability measure on $G_n=G\sim S(n)$. The consistency condition of $\left(P^{(n)}\right)_{n=1}^{\infty}$ with respect to the canonical projection $p_{n,n+1}:\; G_{n+1}\longrightarrow G_n$ reads
\begin{equation}\label{7.1CMT}
\underset{p_{n,n+1}(y)=x}{\sum\limits_{y:\;y\in G_{n+1}}}P^{(n+1)}(y)=P^{(n)}(x).
\end{equation}
We would like to show that each such sequence $\left(P^{(n)}\right)_{n=1}^{\infty}$ is in one-to-one correspondence with a multiple partition structure $\left(\mathcal{M}_n^{(k)}\right)_{n=1}^{\infty}$.
Denote by $P^{(n)}\left(\Lambda_n^{(k)}\right)$ the value of $P^{(n)}$ on the conjugacy class $K_{\Lambda_n^{(k)}}$ of $G_n=G\sim S(n)$ parameterized by a multiple partition $\Lambda_n^{(k)}$, and by
$P^{(n+1)}\left(\tLambda_{n+1}^{(k)}\right)$ the value of $P^{(n+1)}$ on the conjugacy class
$K_{\tLambda_{n+1}^{(k)}}$ of $G_{n+1}=G\sim S(n+1)$ parameterized by  a multiple partition
$\tLambda_{n+1}^{(k)}$. Then the consistency condition (\ref{7.1CMT}) implies
\begin{equation}\label{7.2CMT}
\sum\limits_{\tLambda_{n+1}^{(k)}\in\;\Y_{n+1}^{(k)}}\upsilon\left(\tLambda_{n+1}^{(k)},\Lambda_n^{(k)}\right)
P^{(n+1)}\left(\tLambda_{n+1}^{(k)}\right)=P^{(n)}\left(\Lambda_n^{(k)}\right),
\end{equation}
where $\upsilon\left(\tLambda_{n+1}^{(k)},\Lambda_n^{(k)}\right)$ is the number of  elements $y$,
$y\in K_{\tLambda_{n+1}^{(k)}}$, whose image under $p_{n,n+1}$ is a given element $x$, $x\in K_{\Lambda_n^{(k)}}$. The number $\upsilon\left(\tLambda_{n+1}^{(k)},\Lambda_n^{(k)}\right)$ can be computed explicitly. Assume that $\tLambda_{n+1}^{(k)}=\left(\mu^{(1)},\ldots,\mu^{(k)}\right)$, $\Lambda_n^{(k)}=\left(\lambda^{(1)},\ldots,\lambda^{(k)}\right)$.
Set $x=(g,s)$, $y=(\tilde{g},t)$, where
$g=\left(g_1,\ldots,g_n\right)\in G^n$, $\tilde{g}=\left(\tilde{g}_1,\ldots,\tilde{g}_{n+1}\right)\in G^{n+1}$,
$s\in S(n)$, and $t\in S(n+1)$. If $p_{n,n+1}(y)=x$, then $n+1$ is extracted from one of the cycles of $t$, and $s$ is obtained.  If the cycle of $n+1$ in $t$ is $(n+1)$, then $s$ is obtained from $t$ by removing this cycle.

Suppose that the cycle of $n+1$ in $t$ has length two or more. Then we remove $n+1$ from this cycle, and the obtained cycle of $s$ is of the same type as the original cycle of $t$, see Section \ref{SectionProjection}. Since the multiple partition $\Lambda_n^{(k)}$ describes the cycles of $s$, and the multiple partition $\tLambda_{n+1}^{(k)}$ describes the cycles of $t$, the number
$\upsilon\left(\tLambda_{n+1}^{(k)},\Lambda_n^{(k)}\right)$ is not equal to zero only if the condition (\ref{CTCM})
is satisfied for some $l$, $l\in\left\{1,\ldots,k\right\}$.

Add $n+1$ to   $s$  as a separate cycle $(n+1)$, and change
$g=\left(g_1,\ldots,g_n\right)$ into $\tilde{g}=\left(g_1,\ldots,g_n,g_{n+1}\right)$ (where
$g_{n+1}\in c_l$) to get $y=\left(\tilde{g},t\right)\in K_{\tLambda_{n+1}^{(k)}}$.
If $\Lambda_n^{(k)}=\left(\lambda^{(1)},\ldots,\lambda^{(k)}\right)$, then $\tLambda_{n+1}^{(k)}=\left(\mu^{(1)},\ldots,\mu^{(k)}\right)$ is such that $\mu^{(l)}$ is obtained
by adding one box to the bottom of $\lambda^{(l)}$ (and by keeping $\lambda^{(i)}=\mu^{(i)}$
for all $i\neq l$). In this case
$$
\upsilon\left(\tLambda_{n+1}^{(k)},\Lambda_n^{(k)}\right)=|c_l|.
$$

Assume that $n+1$ is inserted into a cycle of $s$ whose cycle-type is $l$, and whose length is $L^{(l)}-1$
(where $L^{(l)}\geq 2$). Namely, assume that the cycle
$$
i_1\rightarrow\ldots\rightarrow i_m\rightarrow i_{m+1}\rightarrow\ldots\rightarrow
i_{L^{(l)}-1}
$$
of $s$  turns into the cycle
$$
i_1\rightarrow\ldots\rightarrow i_m\rightarrow n+1\rightarrow i_{m+1}\rightarrow\ldots\rightarrow
i_{L^{(l)}-1}
$$
of $t$. Then $g=\left(g_1,\ldots,g_{i_{m+1}},\ldots,g_n\right)$ is replaced by
$\tilde{g}=\left(g_1,\ldots,g_{i_{m+1}}g^{-1}_{n+1},\ldots,g_n,g_{n+1}\right)$, where
$g_{n+1}$ is an arbitrary element of $G$. As a result, $\left(\tilde{g},t\right)\in K_{\tLambda_{n+1}^{(k)}}$, and $\mu^{(l)}$ is obtained from $\lambda^{(l)}$ by adding
a box to some row of $\lambda^{(l)}$  whose size is $L^{(l)}-1$. In this case
$$
\upsilon\left(\tLambda_{n+1}^{(k)},\Lambda_n^{(k)}\right)=|G|\left(L^{(l)}-1\right)r_{L^{(l)}-1}\left(\lambda^{(l)}\right).
$$
We conclude that
\begin{equation}
\upsilon\left(\tLambda_{n+1}^{(k)},\Lambda_n^{(k)}\right)=
\left\{
  \begin{array}{ll}
    |G|\left(L^{(l)}-1\right)r_{L^{(l)}-1}\left(\lambda^{(l)}\right), & L^{(l)}\geq 2, \\
    |c_l|, & L^{(l)}=1.
  \end{array}
\right.
\end{equation}
provided that  $\Lambda_n^{(k)}=\left(\lambda^{(1)},\ldots,\lambda^{(k)}\right)$ and $\tLambda_{n+1}^{(k)}=\left(\mu^{(1)},\ldots,\mu^{(k)}\right)$ are such that condition
(\ref{CTCM}) is satisfied.

Set
$$
\mathcal{M}_n^{(k)}\left(\Lambda_n^{(k)}\right)=\left|K_{\Lambda_n^{(k)}}\right|
P^{(n)}\left(\Lambda_n^{(k)}\right).
$$
This equation defines a probability measure on the set $\Y_n^{(k)}$ of the multiple partitions of $n$ with $k$ components.
In the case $L^{(l)}\geq 2$ we find
\begin{equation}\label{3K3.7}
\frac{\left|K_{\tLambda_{n+1}^{(k)}}\right|}{\left|K_{\Lambda_n^{(k)}}\right|}
=(n+1)\left|G\right|\frac{L^{(l)}-1}{L^{(l)}}
\frac{r_{L^{(l)}-1}\left(\lambda^{(l)}\right)}{r_{L^{(l)}}\left(\mu^{(l)}\right)},
\end{equation}
where we have used equation (\ref{TheSizeOfTheConjugacyClass}). In the case $L^{(l)}=1$ we obtain
\begin{equation}\label{3K3.8}
\frac{\left|K_{\tLambda_{n+1}^{(k)}}\right|}{\left|K_{\Lambda_n^{(k)}}\right|}
=(n+1)\left|G\right|
\frac{1}{r_{1}\left(\mu^{(l)}\right)}\frac{1}{\zeta_{c_l}}.
\end{equation}
It follows from (\ref{7.2CMT}), (\ref{7.2CMT}), (\ref{3K3.7}), (\ref{3K3.8}) that the sequence
$\left(\mathcal{M}_n^{(k)}\right)_{n=1}^{\infty}$ of such probability measures satisfies the consistency condition (\ref{ConsistencyConditionMPS}).

In other words, the consistency of the family $\left(P^{(n)}\right)_{n=1}^{\infty}$
under canonical projection $p_{n,n+1}:\; G_{n+1}=G\sim S(n+1)\longrightarrow G_n=G\sim S(n)$
translates as the condition on $\left(\mathcal{M}_n^{(k)}\right)_{n=1}^{\infty}$ to be a  multiple partition structure, see Section \ref{SubSectionMPS}, and  Strahov \cite{StrahovMPS}. In particular, Theorem 2.2 in Strahov \cite{StrahovMPS} establishes a bijective correspondence between multiple partition structures  $\left(\mathcal{M}_n^{(k)}\right)_{n=1}^{\infty}$ and probability measures $\Pi$ on the space $\overline{\nabla}^{(k)}$ defined by equation (\ref{SetNabla}).
As a consequence, we obtain a bijective correspondence between arbitrary central probability measures $\mathcal{P}$ on the space $\mathfrak{S}_{G}$, and arbitrary probability measures $\Pi$ on the space
$\overline{\nabla}^{(k)}$. Theorem \ref{THEOREMCENTRALMEASURES} is proved.
\qed
%%%%%%%%%%%%%%%%%%%%%%%%%%%%%%%%%%%%%%%%%%%%%%%%%%%%%%%%%%%%%%%%%%%%%%%%%%%%%%%%%%%%
%%%%%%%%%%%%%%%%%%%%%%%%%%%%%%%%%%%%%%%%%%%%%%%%%%%%%%%%%%%%%%%%%%%%%%%%%%%%%%%%%%%%%%%%%%%%%%%%%%%
%%%%%%%%%%%%%%%%%%%%%%%%%%%%%%%%%%%%%%%%%%%%%%%%%%%%%%%%%%%%%%%%%%%%%%%%%%%%%%%%%%%%
%%%%%%%%%%%%%%%%%%%%%%%%%%%%%%%%%%%%%%%%%%%%%%%%%%%%%%%%%%%%%%%%%%%%%%%%%%%%%%%%%%%%%%%%%%%%%%%%%%%
%%%%%%%%%%%%%%%%%%%%%%%%%%%%%%%%%%%%%%%%%%%%%%%%%%%%%%%%%%%%%%%%%%%%%%%%%%%%%%%%%%%%
%%%%%%%%%%%%%%%%%%%%%%%%%%%%%%%%%%%%%%%%%%%%%%%%%%%%%%%%%%%%%%%%%%%%%%%%%%%%%%%%%%%%%%%%%%%%%%%%%%%
%%%%%%%%%%%%%%%%%%%%%%%%%%%%%%%%%%%%%%%%%%%%%%%%%%%%%%%%%%%%%%%%%%%%%%%%%%%%%%%%%%%%
%%%%%%%%%%%%%%%%%%%%%%%%%%%%%%%%%%%%%%%%%%%%%%%%%%%%%%%%%%%%%%%%%%%%%%%%%%%%%%%%%%%%%%%%%%%%%%%%%%%
%%%%%%%%%%%%%%%%%%%%%%%%%%%%%%%%%%%%%%%%%%%%%%%%%%%%%%%%%%%%%%%%%%%%%%%%%%%%%%%%%%%%
%%%%%%%%%%%%%%%%%%%%%%%%%%%%%%%%%%%%%%%%%%%%%%%%%%%%%%%%%%%%%%%%%%%%%%%%%%%%%%%%%%%%%%%%%%%%%%%%%%%
%%%%%%%%%%%%%%%%%%%%%%%%%%%%%%%%%%%%%%%%%%%%%%%%%%%%%%%%%%%%%%%%%%%%%%%%%%%%%%%%%%%%
%%%%%%%%%%%%%%%%%%%%%%%%%%%%%%%%%%%%%%%%%%%%%%%%%%%%%%%%%%%%%%%%%%%%%%%%%%%%%%%%%%%%%%%%%%%%%%%%%%%
\section{The fundamental cocycles of the dynamical system $\left(\mathfrak{S}_{G},G_{\infty}\times G_{\infty}\right)$}
%%%%%%%%%%%%%%%%%%%%%%%%%%%%%%%%%%%%%%%%%%%%%%%%%%%%%%%%%%%%%%%%%%%%%%%%%%%%%%%%%%%%
%%%%%%%%%%%%%%%%%%%%%%%%%%%%%%%%%%%%%%%%%%%%%%%%%%%%%%%%%%%%%%%%%%%%%%%%%%%%%%%%%%%%%%%%%%%%%%%%%%%
%%%%%%%%%%%%%%%%%%%%%%%%%%%%%%%%%%%%%%%%%%%%%%%%%%%%%%%%%%%%%%%%%%%%%%%%%%%%%%%%%%%%
%%%%%%%%%%%%%%%%%%%%%%%%%%%%%%%%%%%%%%%%%%%%%%%%%%%%%%%%%%%%%%%%%%%%%%%%%%%%%%%%%%%%%%%%%%%%%%%%%%%
%%%%%%%%%%%%%%%%%%%%%%%%%%%%%%%%%%%%%%%%%%%%%%%%%%%%%%%%%%%%%%%%%%%%%%%%%%%%%%%%%%%%
%%%%%%%%%%%%%%%%%%%%%%%%%%%%%%%%%%%%%%%%%%%%%%%%%%%%%%%%%%%%%%%%%%%%%%%%%%%%%%%%%%%%%%%%%%%%%%%%%%%
%%%%%%%%%%%%%%%%%%%%%%%%%%%%%%%%%%%%%%%%%%%%%%%%%%%%%%%%%%%%%%%%%%%%%%%%%%%%%%%%%%%%
%%%%%%%%%%%%%%%%%%%%%%%%%%%%%%%%%%%%%%%%%%%%%%%%%%%%%%%%%%%%%%%%%%%%%%%%%%%%%%%%%%%%%%%%%%%%%%%%%%%
%%%%%%%%%%%%%%%%%%%%%%%%%%%%%%%%%%%%%%%%%%%%%%%%%%%%%%%%%%%%%%%%%%%%%%%%%%%%%%%%%%%%
%%%%%%%%%%%%%%%%%%%%%%%%%%%%%%%%%%%%%%%%%%%%%%%%%%%%%%%%%%%%%%%%%%%%%%%%%%%%%%%%%%%%%%%%%%%%%%%%%%%
%%%%%%%%%%%%%%%%%%%%%%%%%%%%%%%%%%%%%%%%%%%%%%%%%%%%%%%%%%%%%%%%%%%%%%%%%%%%%%%%%%%%
%%%%%%%%%%%%%%%%%%%%%%%%%%%%%%%%%%%%%%%%%%%%%%%%%%%%%%%%%%%%%%%%%%%%%%%%%%%%%%%%%%%%%%%%%%%%%%%%%%%
Recall that $c_1$, $\ldots$, $c_k$ denote the conjugacy classes of the finite group $G$. If
$$
y=\left(\left(g_1,\ldots,g_n\right),s\right)\in G\sim S(n),
$$
then $[y]_{c_l}$ denotes the number of cycles of type $c_l$ in $s$.
\begin{thm}\label{THEOREMCocycles} There exist $k$ integer functions $C_{l}\left(x,W\right)$ on $\mathfrak{S}_{G}\times\left(G_{\infty}\times G_{\infty}\right)$ uniquely defined by the following property:
if $n$ is large enough so that $W\in G_n\times G_n$, then
\begin{equation}\label{Cocycles}
C_l\left(x,W\right)=\left[p_n\left(xW\right)\right]_{c_l}-\left[p_n(x)\right]_{c_l}.
\end{equation}
Here $l=1,\ldots,k$;  $p_n :\;\mathfrak{S}_G\longrightarrow G\sim S(n)$ is the natural projection, and the action of $G_{\infty}\times G_{\infty}$ on $\mathfrak{S}_{G}$ is defined as in Section \ref{SubSectionWreathInfty}.
\end{thm}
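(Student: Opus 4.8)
The plan is to deduce existence from a stabilization statement: once I show that, for fixed $x$ and $W$, the right-hand side of (\ref{Cocycles}) does not depend on $n$ so long as $n$ is large enough that $W\in G_n\times G_n$, the function $C_l(x,W)$ can simply be defined as this common value. Uniqueness will then be immediate, since the prescribed property fixes $C_l$ at every pair $(x,W)$, and integrality is clear because $C_l(x,W)$ is a difference of two nonnegative integers.

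First I would record how the type counts change along the tower $\mathfrak{S}_G\to\cdots\to G_{n+1}\to G_n\to\cdots$. Fix $z=(z_1,z_2,\ldots)\in\mathfrak{S}_G$ and write $z_{n+1}=\left((g_1,\ldots,g_{n+1}),\tilde s\right)$. By the description of $p_{n,n+1}$ in Section \ref{SectionProjection}, the passage from $z_{n+1}$ to $z_n=p_{n,n+1}(z_{n+1})$ either deletes the length-one cycle $(n+1)$ — this happens exactly when $n+1$ is a fixed point of $\tilde s$, and then the deleted cycle has cycle-product $g_{n+1}$ — or else it shortens the longer cycle through $n+1$ while preserving its type. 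Therefore, for each $l$,
\begin{equation}
[p_{n+1}(z)]_{c_l}-[p_n(z)]_{c_l}=
\begin{cases}
1,&\tilde s(n+1)=n+1\ \text{and}\ g_{n+1}\in c_l,\\
0,&\text{otherwise}.
\end{cases}
\nonumber
\end{equation}

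Now fix $N$ with $W=(w_1,w_2)\in G_N\times G_N$. For $n\geq N$ one has $p_n(xW)=w_2^{-1}x_n w_1$ and $p_n(x)=x_n$, and by Proposition \ref{PROPOSITIONEQUIVARIANCEPROJECTION} the elements $(xW)_m=w_2^{-1}x_m w_1$ form a consistent sequence, so the increment formula above applies to both $z=x$ and $z=xW$. Writing $D_l(n)=[p_n(xW)]_{c_l}-[p_n(x)]_{c_l}$ and regrouping, $D_l(n+1)-D_l(n)$ equals the increment of $xW$ at level $n+1$ minus the increment of $x$ at level $n+1$. Hence the whole matter reduces to the following claim: for $n+1>N$, the index $n+1$ is a fixed point of the permutation part of $x_{n+1}$ with weight in $c_l$ if and only if the same holds for $(xW)_{n+1}=w_2^{-1}x_{n+1}w_1$.

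I expect this last claim to be the main obstacle, and I would settle it by a direct computation in the wreath product. Since $w_1,w_2\in G_N$, their permutation parts fix every index exceeding $N$, and their $D_\infty(G)$-components equal $e_G$ at those indices. Using the multiplication rule $(g,s)(h,t)=(g\cdot s(h),st)$, the coordinate action $s(g)_j=g_{s^{-1}(j)}$, and the inverse formula $(g,s)^{-1}=(s^{-1}(g^{-1}),s^{-1})$, one checks first that the permutation part of $(xW)_{n+1}$ fixes $n+1$ precisely when $\tilde s$ does (the outer factors cannot move $n+1$), and second that, in the fixed-point case, the three contributions to the $(n+1)$-st group coordinate coming from $w_2^{-1}$, from $x_{n+1}$, and from $w_1$ are respectively $e_G$, $g_{n+1}$, and $e_G$, so the weight at $n+1$ is exactly $g_{n+1}$, unchanged by $W$. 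This proves the claim, whence $D_l(n+1)=D_l(n)$ for all $n\geq N$; defining $C_l(x,W)$ to be this stable value completes the argument.
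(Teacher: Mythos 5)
Your proof is correct, but it takes a genuinely different route from the paper. The paper fixes $x_{n+1}\in G_{n+1}$ projecting to $x_n$ and proves the $n$-independence of $\left[x_nW\right]_{c_l}-\left[x_n\right]_{c_l}$ by reducing $W$ to generators: a multiplicativity lemma (Proposition \ref{PROPOSITIONCoW}, resting on the equivariance of $p_{n,n+1}$) reduces the statement to one-sided multiplications, which are then checked separately for elements of the form $\left(\left(g_1,\ldots,g_n\right),e_{S(n)}\right)$ and for transpositions $\left(e_{G^n},(ij)\right)$, via a careful case analysis of how cycles merge and split and how the cycle-products transform (Propositions \ref{PROPOSITIONCoW1} and \ref{PROPOSITIONCoW2}). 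You instead telescope along the projective tower: you first observe that for any $z\in\mathfrak{S}_G$ the increment $\left[p_{n+1}(z)\right]_{c_l}-\left[p_n(z)\right]_{c_l}$ is the indicator that $n+1$ is a fixed point of the permutation part of $z_{n+1}$ with weight in $c_l$, and then verify by a direct coordinate computation in the wreath product (using that $w_1,w_2\in G_N$ act trivially at indices beyond $N$, together with the equivariance needed to identify $p_n(xW)$ with $w_2^{-1}x_nw_1$) that two-sided multiplication by $W$ changes neither the fixed-point property at $n+1$ nor the weight $g_{n+1}$ there. I checked your coordinate computation — including the inverse formula $(g,s)^{-1}=\left(s^{-1}(g^{-1}),s^{-1}\right)$ and the evaluation $h_{n+1}\,g_{u^{-1}(n+1)}\,f_{(u\tilde s)^{-1}(n+1)}=g_{n+1}$ — and it is sound. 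Your argument is shorter and avoids both the generator decomposition and the cycle-merging case analysis; what the paper's approach buys in exchange is explicit control over how the type counts $[\,\cdot\,]_{c_l}$ behave under elementary left and right multiplications, in the spirit of the classical $S(\infty)$ treatment, though that extra information is not strictly needed for the theorem itself.
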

\begin{proof}We need to prove that the quantities $C_l(x,W)$ defined by equation (\ref{Cocycles}) do not depend on $n$ provided that $n$ is so large that
the element $W$ of $G_{\infty}\times G_{\infty}$ already belongs to $G_n\times G_n$.

Let $W$ be an element of $G_n\times G_n$. By Proposition \ref{PROPOSITIONEQUIVARIANCEPROJECTION} the projection $p_n$ is equivariant with respect to two-sided action of $G_n$. Thus in order to
prove Theorem \ref{THEOREMCocycles} it is enough to show that the condition
\begin{equation}\label{CocCondition}
\left[x_nW\right]_{c_l}-\left[x_n\right]_{c_l}=\left[x_{n+1}W\right]_{c_l}-\left[x_{n+1}\right]_{c_l}
\end{equation}
is satisfied for all $x_n\in G_n$, and for all $x_{n+1}\in G_{n+1}$ such that $x_n=p_{n,n+1}\left(x_{n+1}\right)$.
\begin{prop}\label{PROPOSITIONCoW}Assume that (\ref{CocCondition}) is satisfied for $W_1,\ldots, W_m\in G_n\times G_n$. Then
(\ref{CocCondition}) is also satisfied for the product $W_1\ldots W_m$. In other words, if
\begin{equation}\label{CocCondition1}
\left[x_nW_p\right]_{c_l}-\left[x_n\right]_{c_l}=\left[x_{n+1}W_p\right]_{c_l}-\left[x_{n+1}\right]_{c_l}
\end{equation}
holds true for each $p=1,\ldots,m$,  and for all $x_n\in G_n$, $x_{n+1}\in G_{n+1}$ such that $x_n=p_{n,n+1}\left(x_{n+1}\right)$, then
\begin{equation}\label{CocCondition2}
\left[x_nW_1\ldots W_m\right]_{c_l}-\left[x_n\right]_{c_l}=\left[x_{n+1}W_1\ldots W_m\right]_{c_l}-\left[x_{n+1}\right]_{c_l}
\end{equation}
holds true as well.
\end{prop}
\begin{proof} The proof is by induction. If $m=1$, then (\ref{CocCondition2}) turns into
(\ref{CocCondition1}) which holds true by our assumption in the statement of Proposition \ref{PROPOSITIONCoW}.
Assume that $m\geq 2$, and that
\begin{equation}\label{CocCondition3}
\left[x_nW_1\ldots W_{m-1}\right]_{c_l}-\left[x_n\right]_{c_l}=\left[x_{n+1}W_1\ldots W_{m-1}\right]_{c_l}-\left[x_{n+1}\right]_{c_l}
\end{equation}
is satisfied for  all $x_n\in G_n$, and for all $x_{n+1}\in G_{n+1}$ such that $x_n=p_{n,n+1}\left(x_{n+1}\right)$.
Denote
$$
\tilde{x}_n=x_nW_1\ldots W_{m-1},
$$
and observe that
\begin{equation}\label{P3211}
p_{n,n+1}\left(x_{n+1}W_1\ldots W_{m-1}\right)=p_{n,n+1}\left(x_{n+1}\right)W_1\ldots W_{m-1}=\tilde{x}_n,
\end{equation}
where we have used the equivariance of the projection $p_{n,n+1}$, see Proposition \ref{PROPOSITIONEQUIVARIANCEPROJECTION}. Now we write
\begin{equation}\label{P3212}
\left[x_nW_1\ldots W_{m}\right]_{c_l}-\left[x_n\right]_{c_l}=\left[\tilde{x}_nW_m\right]_{c_l}-\left[\tilde{x}_n\right]_{c_l}+\left[\tilde{x}_n\right]_{c_l}-\left[x_n\right]_{c_l}.
\end{equation}
By our assumption in the statement of  Proposition \ref{PROPOSITIONCoW}, and by (\ref{P3211}) the first difference in the right-hand side of equation (\ref{P3212}) can be rewritten as
\begin{equation}\label{P3213}
\left[\tilde{x}_nW_m\right]_{c_l}-\left[\tilde{x}_n\right]_{c_l}=\left[x_{n+1}W_1\ldots W_m\right]_{c_l}-\left[x_{n+1}W_1\ldots W_{m-1}\right]_{c_l}.
\end{equation}
The second difference in the right-hand side of equation (\ref{P3212}) is
\begin{equation}\label{P3214}
\left[\tilde{x}_n\right]_{c_l}-\left[x_n\right]_{c_l}=\left[x_nW_1\ldots W_{m-1}\right]_{c_l}-\left[x_n\right]_{c_l}.
\end{equation}
By assumption (\ref{CocCondition3}), it can be replaced by $\left[x_{n+1}W_1\ldots W_{m-1}\right]_{c_l}-\left[x_{n+1}\right]_{c_l}$. Then we conclude that
$$
\left[x_nW_1\ldots W_{m}\right]_{c_l}-\left[x_n\right]_{c_l}=\left[x_{n+1}W_1\ldots W_{m}\right]_{c_l}-\left[x_{n+1}\right]_{c_l}
$$
holds true as well. Proposition \ref{PROPOSITIONCoW} is proved.
\end{proof}
Proposition \ref{PROPOSITIONCoW} implies that it suffices to prove (\ref{CocCondition})
for $W$ of the form $\left(w,e_{G_n}\right)$, and for $W$ of the form $\left(e_{G_n},w\right)$
where $w\in G_n=G\sim S(n)$, and $e_{G_n}$ is the unit element of $G_n=G\sim S(n)$. Below we consider the second case. In this case we need to prove that
\begin{equation}
\left[wx_n\right]_{c_l}-\left[x_n\right]_{c_l}=\left[wx_{n+1}\right]_{c_l}-\left[x_{n+1}\right]_{c_l}
\end{equation}
for any $w=\left(\left(g_1,\ldots,g_n\right),s\right)\in G\sim S(n)$, any $x_n\in G\sim S(n)$, and any $x_{n+1}\in G\sim S(n+1)$ such that $p_{n,n+1}\left(x_{n+1}\right)=x_n$.
\begin{prop}\label{PROPOSITIONCoW1} With $x_n$ and $x_{n+1}$ as above,
\begin{equation}\label{ge}
\left[\left(\left(g_1,\ldots,g_n\right),e_{S(n)}\right)x_n\right]_{c_l}-\left[x_n\right]_{c_l}
=\left[\left(\left(g_1,\ldots,g_n\right),e_{S(n)}\right)x_{n+1}\right]_{c_l}-\left[x_{n+1}\right]_{c_l},
\end{equation}
where $e_{S(n)}$ denotes the unit element of $S(n)$, and $g_1$, $\ldots$, $g_n$ are arbitrary elements of $G$.
\end{prop}
\begin{proof} Assume that $x_n=(h,t)$, $x_{n+1}=\left(\tilde{h},t'\right)$, and $t'$ is obtained from $t$ by inserting $n+1$ into the existing cycle:
$$
i_1\rightarrow \ldots\rightarrow i_p\rightarrow i_{p+1}\rightarrow\ldots\rightarrow i_1\Longrightarrow
i_1\rightarrow\ldots\rightarrow i_{p}\rightarrow n+1\rightarrow i_{p+1}\rightarrow\ldots\rightarrow i_1.
$$
If $h=\left(h_1,\ldots,h_n\right)$,  and $n+1$ is inserted between the numbers $i_p$ and $i_{p+1}$,
then $\tilde{h}=\left(h_1,\ldots,h_{i_{p+1}}h_{n+1}^{-1},\ldots,h_n,h_{n+1}\right)$,
i.e. $\tilde{h}$ is obtained from $h$ by adding an additional element of $G$ to the list $\left(h_1,\ldots,h_n\right)$
(this additional element is denoted by $h_{n+1}$), and by multiplication of $h_{i_{p+1}}$ by $h_{n+1}^{-1}$ from the right. Note that  the cycle-product of $x_{n+1}$ corresponding to the cycle
\begin{equation}\label{cycleiinserted}
i_1\rightarrow i_2\rightarrow\ldots\rightarrow i_{p}\rightarrow n+1\rightarrow i_{p+1}\rightarrow\ldots\rightarrow i_1
\end{equation}
is $h_{i_1}\ldots\left(h_{i_{p+1}}h_{n+1}^{-1}\right)h_{n+1}h_{i_p}\ldots h_{i_2}$,
which is the same as the cycle-product of $x_n$ corresponding to the cycle
\begin{equation}\label{cyclei}
i_1\rightarrow \ldots\rightarrow i_p\rightarrow i_{p+1}\rightarrow\ldots\rightarrow i_1
\end{equation}
We conclude that
$$
\left[x_n\right]_{c_l}=\left[x_{n+1}\right]_{c_l},\;\; l=1,\ldots, k
$$
in this case.

Now we have
\begin{equation}\label{4.13.a}
\left(\left(g_1,\ldots,g_n\right),e_{S(n)}\right)x_n=\left(\left(g_1h_1,\ldots,g_nh_n\right),t\right),
\end{equation}
and
$$
\left(\left(g_1,\ldots,g_n\right),e_{S(n)}\right)x_{n+1}
=\left(\left(g_1h_1,\ldots,g_{i_{p+1}}h_{i_{p+1}}h_{n+1}^{-1},\ldots,
g_nh_n,h_{n+1}\right),t'\right).
$$
We check that the cycle-product of $\left(\left(g_1,\ldots,g_n\right),e_{S(n)}\right)x_n$ corresponding to the
cycle (\ref{cyclei}) is the same as that of $\left(\left(g_1,\ldots,g_n\right),e_{S(n)}\right)x_{n+1}$
corresponding to the cycle (\ref{cycleiinserted}). Therefore,
$$
\left[\left(\left(g_1,\ldots,g_n\right),e_{S(n)}\right)x_n\right]_{c_l}
=\left[\left(\left(g_1,\ldots,g_n\right),e_{S(n)}\right)x_{n+1}\right]_{c_l}, \;\; l=1,\ldots, k.
$$
Thus we conclude that if $t'$ is obtained from $t$ by inserting $n+1$ into an existing cycle of $t$, then
equation  (\ref{ge}) is satisfied.

Now assume that $t'$  is obtained from $t$ by adding a new cycle of the form $(n+1)$. Then
$\tilde{h}=\left(h_1,\ldots,h_n,h_{n+1}\right)$, and we have
$$
\left(\left(g_1,\ldots,g_n\right),e_{S(n)}\right)x_{n+1}
=\left(\left(g_1h_1,\ldots,g_nh_n,h_{n+1}\right),t'\right).
$$
As for the product $\left(\left(g_1,\ldots,g_n\right)e_{S(n)}\right)x_n$, it is given by equation (\ref{4.13.a}).
We find
$$
\left[x_{n+1}\right]_{c_l}-\left[x_n\right]_{c_l}=
 \left\{
 \begin{array}{ll}
 1, & h_{n+1}\in c_l, \\
 0, & \mbox{otherwise},
\end{array}
\right.
$$
and
$$
\left[\left(\left(g_1,\ldots,g_n\right),e_{S(n)}\right)x_{n+1}\right]_{c_l}
-\left[\left(\left(g_1,\ldots,g_n\right),e_{S(n)}\right)x_n\right]_{c_l}=
 \left\{
 \begin{array}{ll}
 1, & h_{n+1}\in c_l, \\
 0, & \mbox{otherwise}.
\end{array}
\right.
$$
Equation (\ref{ge}) holds true in this case as well.
\end{proof}
\begin{prop}\label{PROPOSITIONCoW2} Assume that $s\in S(n)$ is a transposition $(ij)$ (where $1\leq i<j\leq n$), and let $e_{G^{n}}$ denote the unit element of $G^n=G\times\ldots\times G$. We have
\begin{equation}\label{3.3.13}
\left[\left(e_{G^n},s\right)x_n\right]_{c_l}-\left[x_n\right]_{c_l}
=\left[\left(e_{G^n},s\right)x_{n+1}\right]_{c_l}-\left[x_{n+1}\right]_{c_l}
\end{equation}
for each $l=1,\ldots,k$, each $x_n\in G\sim S(n)$, and each $x_{n+1}\in G\sim S(n+1)$ such that $p_{n,n+1}\left(x_{n+1}\right)=x_n$.
\end{prop}
\begin{proof}Set
$$
x_n=\left(g,t\right),\; g=\left(g_1,\ldots,g_n\right),\; t\in S(n),
$$
and write $t$ as a product of cycles. First, let us assume that $i$ and $j$ are situated in two different cycles of $t$.
We can write these cycles explicitly as
\begin{equation}\label{3.3.14}
i\rightarrow i_1\rightarrow\ldots\rightarrow i_p\rightarrow i,
\end{equation}
and
\begin{equation}\label{3.3.15}
j\rightarrow j_1\rightarrow\ldots\rightarrow j_f\rightarrow j.
\end{equation}
The cycle-product of $x_n$ corresponding to the first cycle is $g_{i_p}\ldots g_{i_1}g_i$, and the cycle-product of $x_n$ corresponding to the second cycle is $g_{j_f}\ldots g_{j_1}g_{j}$.

Set $x_{n+1}=\left(g',t'\right)$, and assume that $t'$ is obtained from $t$ by creating a new cycle $(n+1)$, or by inserting $n+1$ to an existing cycle of $t$ which do not contain both $i$ and $j$. In this case it is not hard to check that (\ref{3.3.13}) is satisfied.
Indeed, the cycle in which $n+1$ is going to be inserted does not affect the difference
$\left[\left(e_{G^n},s\right)x_n\right]_{c_l}-\left[x_n\right]_{c_l}$, and the same cycle with inserted $n+1$ does not affect the difference
$\left[\left(e_{G^n},s\right)x_{n+1}\right]_{c_l}-\left[x_n\right]_{c_l}$.

If $t'$ is obtained from $t$ by inserting $n+1$ into the cycle with $i$, then  the cycle (\ref{3.3.14})
turns into the cycle
\begin{equation}\label{3.3.17}
i\rightarrow i_1\rightarrow\ldots\rightarrow i_m\rightarrow n+1\rightarrow i_{m+1}\rightarrow
\ldots\rightarrow i_p\rightarrow i,
\end{equation}
and we obtain $\left[x_{n+1}\right]_{c_l}=\left[x_n\right]_{c_l}$.  Indeed,
as soon as $x_n=p_{n,n+1}\left(x_{n+1}\right)$,
the cycle-product of $x_n$ corresponding to the cycle (\ref{3.3.14}) is the same as that of $x_{n+1}$ corresponding to the cycle
 (\ref{3.3.17}).

In $\left(e_{G^n},s\right)x_n$ the cycles (\ref{3.3.14}) and (\ref{3.3.15}) of $t$ merge into the cycle
\begin{equation}\label{3.3.18}
i\rightarrow i_1\rightarrow\ldots\rightarrow i_p\rightarrow j\rightarrow j_{1}\rightarrow
\ldots\rightarrow j_f\rightarrow i.
\end{equation}
The cycle-product of $\left(e_{G^n},s\right)x_n$ corresponding to the cycle (\ref{3.3.18})
is
\begin{equation}\label{zctype}
g_ig_{j_f}\ldots g_{j_1}g_{j}g_{i_p}\ldots g_{i_1}.
\end{equation}
In $\left(e_{G^n},s\right)x_{n+1}$ the cycles (\ref{3.3.17}) and (\ref{3.3.15}) of $t'$ merge into the cycle
\begin{equation}\label{3.3.19}
i\rightarrow i_1\rightarrow\ldots\rightarrow i_m\rightarrow n+1\rightarrow i_{m+1}\rightarrow
\ldots\rightarrow i_p\rightarrow j\rightarrow j_1\rightarrow \ldots\rightarrow j_f\rightarrow i,
\end{equation}
and the cycle-product of $\left(e_{G^n},s\right)x_{n+1}$ corresponding to this cycle is given by
(\ref{zctype}) as well. We conclude that
\begin{equation}\label{Zvzbv}
\left[\left(e_{G^n},s\right)x_{n}\right]_{c_l}=\left[\left(e_{G^n},s\right)x_{n+1}\right]_{c_l},
\end{equation}
and that equation (\ref{3.3.13}) holds true provided $i$ and $j$ are situated in two different cycles of $t$.

Second, consider the case where $i$ and $j$ are situated in the same cycle of $t$. Let us write this cycle as
\begin{equation}\label{3.3.20}
i\rightarrow i_1\rightarrow\ldots\rightarrow i_p\rightarrow j\rightarrow j_{1}\rightarrow
\ldots\rightarrow j_f\rightarrow i.
\end{equation}
In $t'$ the corresponding cycle includes $n+1$, and it takes the form
\begin{equation}\label{3.3.21}
i\rightarrow i_1\rightarrow\ldots\rightarrow i_{m}\rightarrow n+1\rightarrow i_{m+1}\rightarrow\ldots
\rightarrow i_p\rightarrow j\rightarrow j_{1}\rightarrow
\ldots\rightarrow j_f\rightarrow i.
\end{equation}
Again, we have
\begin{equation}
\left[x_{n}\right]_{c_l}=\left[x_{n+1}\right]_{c_l},
\end{equation}
for all $l=1,\ldots,k$ as the cycle-product of $x_n$ corresponding to (\ref{3.3.20}) is the same as that of $x_{n+1}$ corresponding to (\ref{3.3.21}). Now, in $\left(e_{G^n},s\right)x_{n}$ the multiplication of $x_n$ by $\left(e_{G^n},s\right)$ leads to the splitting of (\ref{3.3.20}) into two cycles
\begin{equation}\label{3.3.22}
i\rightarrow i_1\rightarrow\ldots\rightarrow i_p\rightarrow i\;\;\; \mbox{and}\;\;\; j\rightarrow j_{1}\rightarrow
\ldots\rightarrow j_f\rightarrow j.
\end{equation}
Moreover, in $\left(e_{G^n},s\right)x_{n+1}$ the multiplication of $x_{n+1}$ by $\left(e_{G^n},s\right)$ leads to the splitting of (\ref{3.3.21}) into two cycles
\begin{equation}\label{3.3.23}
i\rightarrow i_1\rightarrow\ldots\rightarrow i_m\rightarrow n+1\rightarrow i_m\rightarrow\ldots\rightarrow i\;\;\; \mbox{and}\;\;\; j\rightarrow j_{1}\rightarrow
\ldots\rightarrow j_f\rightarrow j.
\end{equation}
Again we check that the cycle-products of   $\left(e_{G^n},s\right)x_n$ corresponding to (\ref{3.3.22}) are the same as those of $\left(e_{G^n},s\right)x_{n+1}$ corresponding to (\ref{3.3.23}), and conclude that
(\ref{Zvzbv}) is satisfied. Therefore, equation (3.3.13) holds true in the situation where $i$ and $j$ are situated in the same cycle of $t$ as well.
\end{proof}
Propositions (\ref{PROPOSITIONCoW}), (\ref{PROPOSITIONCoW1}), and (\ref{PROPOSITIONCoW2}) imply that equation (\ref{CocCondition}) is satisfied for $W=\left(e_{G_n},w\right)$, where $w$ is an arbitrary element of $G_n$. The case of  $W=\left(w,e_{G_n}\right)$ can be considered in the same way.
Theorem \ref{THEOREMCocycles} is proved.
\end{proof}
The $k$ integer-valued  functions $C_{l}\left(x,W\right)$ are called the \textit{fundamental cocycles of the dynamical system} $\left(\mathfrak{S}_G,G_{\infty}\times G_{\infty}\right)$.
It is important for what follows that these functions can be defined
correctly  for all $x\in\mathfrak{S}_G$ and $W\in G_{\infty}\times G_{\infty}$.
%%%%%%%%%%%%%%%%%%%%%%%%%%%%%%%%%%%%%%%%%%%%%%%%%%%%
%%%%%%%%%%%%%%%%%%%%%%%%%%%%%%%%%%%%%%%%%%%%%%%%%%%%
%%%%%%%%%%%%%%%%%%%%%%%%%%%%%%%%%%%%%%%%%%%%%%%%%%%%
%%%%%%%%%%%%%%%%%%%%%%%%%%%%%%%%%%%%%%%%%%%%%%%%%%%%
%%%%%%%%%%%%%%%%%%%%%%%%%%%%%%%%%%%%%%%%%%%%%%%%%%%%
%%%%%%%%%%%%%%%%%%%%%%%%%%%%%%%%%%%%%%%%%%%%%%%%%%%%
\section{The Ewens measures}
%%%%%%%%%%%%%%%%%%%%%%%%%%%%%%%%%%%%%%%%%%%%%%%%%%%%
%%%%%%%%%%%%%%%%%%%%%%%%%%%%%%%%%%%%%%%%%%%%%%%%%%%%
%%%%%%%%%%%%%%%%%%%%%%%%%%%%%%%%%%%%%%%%%%%%%%%%%%%%
%%%%%%%%%%%%%%%%%%%%%%%%%%%%%%%%%%%%%%%%%%%%%%%%%%%%
%%%%%%%%%%%%%%%%%%%%%%%%%%%%%%%%%%%%%%%%%%%%%%%%%%%%
%%%%%%%%%%%%%%%%%%%%%%%%%%%%%%%%%%%%%%%%%%%%%%%%%%%%
\subsection{The Ewens distribution on $G_n=G\sim S(n)$}
Recall that the Ewens probability measure $P_{t,n}^{\Ewens}$ on the symmetric group $S(n)$ is defined by
\begin{equation}
P_{t,n}^{\Ewens}(s)=\frac{t^{[s]}}{t(t+1)\ldots(t+n-1)},\;\; s\in S(n),
\end{equation}
where $t>0$, and $[s]$ denotes the number of cycles in $s$. The measure $P^{\Ewens}_{t,n}$ is invariant under the action of $S(n)$ on itself by conjugation, so $P_{t,n}^{\Ewens}$ is a central measure. As a central measure
$P_{t,n}^{\Ewens}$ gives rise to a probability measure $\mathcal{M}_{t,n}^{\Ewens}$ on the set $\Y_n$ of Young diagrams with $n$ boxes. The measure $\mathcal{M}_{t,n}^{\Ewens}$ is
\begin{equation}\label{MEwensSymmetric}
\mathcal{M}_{t, n}^{\Ewens}\left(\lambda\right)
=\frac{n!}{(t)_{n}}
\frac{t^{l(\lambda)}}{\prod\limits_{j=1}^{n}j^{r_j(\lambda)}r_j(\lambda)!},
\end{equation}
where $l(\lambda)$ is the length of $\lambda$.

The Ewens probability measure $P_{t,n}^{\Ewens}$ admits a nontrivial generalization
$P_{t_1,\ldots,t_k,n}^{\Ewens}$ which is a probability distribution on $G_n=G\sim S(n)$.
\begin{defn} Fix $t_1>0$, $\ldots$, $t_k>0$, and set
\begin{equation}\label{EwensMeasureWreathProduct}
P_{t_1,\ldots,t_k;n}^{\Ewens}(x)=\frac{t_1^{[x]_{c_1}}t_2^{[x]_{c_2}}\ldots t_k^{[x]_{c_k}}}{
|G|^n\left(\frac{t_1}{\zeta_{c_1}}+\ldots+\frac{t_k}{\zeta_{c_k}}\right)_n},\;
x=\left((g_1,\ldots,g_n),s\right)\in G\sim S(n),
\end{equation}
where $c_1$, $\ldots$, $c_k$ are the conjugacy classes of $G$, $[x]_{c_l}$ is the number of cycles of type $c_l$ in $s$, $\zeta_{c_l}=\frac{|G|}{|c_l|}$, and $(a)_n=a(a+1)\ldots (a+n-1)$ is the Pochhammer symbol. Each
$P_{t_1,\ldots,t_k;n}^{\Ewens}$ is a probability measure on $G\sim S(n)$. It is known that
$P_{t_1,\ldots,t_k;n}^{\Ewens}(x)$ is a probability distribution on $G\sim S(n)$,
see Strahov \cite{StrahovMPS},  Proposition 4.1. This probability distribution  is called the \textit{ Ewens distribution on the wreath product of a finite group $G$ with the symmetric group $S(n)$}.
\end{defn}
Recall that $\Y_n^{(k)}$ denotes the set of  multiple partitions of $n$ into $k$ components. This set is defined by equation (\ref{SetYn}).
The pushforward of $P_{t_1,\ldots,t_k; n}^{\Ewens}$ on $\Y_n^{(k)}$ is denoted by $\mathcal{M}_{t_1,\ldots,t_k}^{\Ewens}\left(\Lambda_n^{(k)}\right)$.
This is a probability measure on $\Y_n^{(k)}$, which can be written explicitly as
\begin{equation}
\begin{split}
\mathcal{M}_{t_1,\ldots,t_k}^{\Ewens}\left(\Lambda_n^{(k)}\right)
&=\frac{n!}{|\lambda^{(1)}|!\ldots |\lambda^{(k)}|!}
\frac{\left(T_1\right)_{|\lambda^{(1)}|}
\ldots\left(T_k\right)_{|\lambda^{(k)}|}}{\left(T_1+\ldots+T_k\right)_n}\\
&\times
\mathcal{M}_{T_1,|\lambda^{(1)}|}^{\Ewens}\left(\lambda^{(1)}\right)\ldots
\mathcal{M}_{T_k,|\lambda^{(k)}|}^{\Ewens}\left(\lambda^{(k)}\right),
\end{split}
\end{equation}
where $\Lambda_n^{(k)}=\left(\lambda^{(1)},\ldots,\lambda^{(k)}\right)\in\Y_n^{(k)}$, the parameters
$T_1$, $\ldots$, $T_k$ are defined by
\begin{equation}\label{5.4.a}
T_l=\frac{t_l}{\zeta_{c_l}},\; 1\leq l\leq k,
\end{equation}
and $\mathcal{M}_{T, n}^{\Ewens}$ stands for the Ewens distribution on the set of Young diagrams with $n$ boxes defined by equation (\ref{MEwensSymmetric}).

The crucial property of the probability measures $P_{t_1,\ldots,t_k; n}^{\Ewens}$
is that they are pairwise consistent with respect to the canonical projection
$p_{n,n+1}: G_{n+1}\longrightarrow G_n$.
\begin{prop}\label{PropositionProjection}We have
\begin{equation}
\underset{p_{n,n+1}(\tilde{x})=x}{\sum\limits_{\tilde{x}:\; \tilde{x}\in G\sim S(n+1)}}P_{t_1,\ldots,t_k;n+1}^{\Ewens}(\tilde{x})
=P_{t_1,\ldots,t_k;n}^{\Ewens}(x).
\end{equation}
\end{prop}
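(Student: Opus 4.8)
The plan is to fix $x=\left((g_1,\ldots,g_n),s\right)\in G_n$, write $m_l:=[x]_{c_l}$ for its cycle-type counts, and evaluate the left-hand side by summing the weights $P^{\Ewens}_{t_1,\ldots,t_k;n+1}(\tilde x)$ over the fiber $p_{n,n+1}^{-1}(x)$. First I would describe this fiber explicitly, reversing the recipe of Section \ref{SectionProjection}. Each $\tilde x=\left((\tilde g_1,\ldots,\tilde g_{n+1}),\tilde s\right)$ with $p_{n,n+1}(\tilde x)=x$ arises from $x$ by exactly one of two mutually exclusive operations: either (a) adjoining $(n+1)$ as a new fixed-point cycle of $\tilde s$ with an arbitrary label $\tilde g_{n+1}=w\in G$; or (b) inserting $n+1$ immediately after one of the elements $i\in\{1,\ldots,n\}$ in its cycle, with $\tilde g_{n+1}=w\in G$ arbitrary and the single adjacent label adjusted (as in Section \ref{SectionProjection}) so that the cycle-product is unchanged. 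Type (a) contributes $|G|$ preimages, and type (b) contributes $n|G|$ preimages (there are $n$ insertion slots and $|G|$ free choices of $w$), so each fiber has the expected cardinality $(n+1)|G|$.

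Next I would track the numerator $t_1^{[\tilde x]_{c_1}}\cdots t_k^{[\tilde x]_{c_k}}$ of (\ref{EwensMeasureWreathProduct}) in each case. In case (b) the projection preserves cycle types (Section \ref{SectionProjection}), so $[\tilde x]_{c_l}=m_l$ for every $l$ and the numerator of $\tilde x$ equals $\prod_l t_l^{m_l}$, the numerator of $x$; summing over the $n|G|$ type-(b) preimages yields $n|G|\prod_l t_l^{m_l}$. In case (a) the new $1$-cycle has cycle-product $w$, so $[\tilde x]_{c_l}=m_l+1$ when $w\in c_l$ and the remaining counts are unchanged; the numerator of $\tilde x$ is therefore $t_l\prod_j t_j^{m_j}$. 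Summing over $w\in G$ gives $\left(\sum_{l=1}^k|c_l|t_l\right)\prod_j t_j^{m_j}$, and here the crucial identity is $|c_l|t_l=|G|T_l$, coming from $\zeta_{c_l}=|G|/|c_l|$ and $T_l=t_l/\zeta_{c_l}$ in (\ref{5.4.a}), so that the type-(a) total equals $|G|\left(T_1+\cdots+T_k\right)\prod_j t_j^{m_j}$.

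Finally I would combine the two contributions over the common denominator $|G|^{n+1}(T_1+\cdots+T_k)_{n+1}$. Writing $a=T_1+\cdots+T_k$, the sum over the fiber becomes
\begin{equation}
\frac{\prod_{l}t_l^{m_l}}{|G|^{n+1}(a)_{n+1}}\bigl(n|G|+|G|\,a\bigr)
=\frac{\prod_{l}t_l^{m_l}}{|G|^{n}(a)_{n+1}}\,(a+n),
\nonumber
\end{equation}
and the Pochhammer identity $(a)_{n+1}=(a)_n(a+n)$ cancels the factor $(a+n)$, leaving exactly $P^{\Ewens}_{t_1,\ldots,t_k;n}(x)$ as given by (\ref{EwensMeasureWreathProduct}). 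The main obstacle is the bookkeeping of the fiber in the second paragraph: one must verify that insertion into an existing cycle leaves all the counts $[\tilde x]_{c_l}$ unchanged while creation of a new fixed point shifts a single count by one, and that the weighted count over the new labels produces precisely the factor $|G|\,a$ that the Pochhammer telescoping requires; everything else is routine.
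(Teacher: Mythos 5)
Your proof is correct and takes essentially the same route as the paper's: the paper likewise computes $\sum_{\tilde x}t_1^{[\tilde x]_{c_1}}\cdots t_k^{[\tilde x]_{c_k}}=\left(|G|n+|c_1|t_1+\cdots+|c_k|t_k\right)t_1^{[x]_{c_1}}\cdots t_k^{[x]_{c_k}}$ by splitting the fiber into the $n|G|$ insertions of $n+1$ into existing cycles (cycle-type counts unchanged) and the new fixed-point cycle $(n+1)$ (with $|c_l|$ choices of label raising the $c_l$-count by one), after which the Pochhammer identity $(a)_{n+1}=(a)_n(a+n)$ finishes the computation. Your write-up merely makes explicit the fiber enumeration, the adjacent-label adjustment, and the identity $|c_l|t_l=|G|T_l$ that the paper's terse proof leaves implicit.
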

\begin{proof} We compute
\begin{equation}
\underset{p_{n,n+1}(\tilde{x})=x}{\sum\limits_{\tilde{x}:\; \tilde{x}\in G\sim S(n+1)}}t_1^{[\tilde{x}]_{c_1}}
\ldots t_k^{[\tilde{x}]_{c_k}}
=\left(|G|n+|c_1|t_1+\ldots+|c_k|t_k\right)t_1^{[x]_{c_1}}\ldots t_k^{[x]_{c_k}}.
\end{equation}
The first term in the brackets, $|G|n$, comes from the fact that there are $n$ ways to insert $n+1$ to the existing cycle of $s\in S(n)$. If $n+1$ forms an extra cycle, then there are $|c_l|$ ways to increase the number of cycles of type $c_l$ in $s$. Proposition \ref{PropositionProjection} follows.
\end{proof}
%%%%%%%%%%%%%%%%%%%%%%%%%%%%%%%%%%%%%%%%%%%%%%%%%%%%%%%%%%%%%%%%%%%%%%%%%%%%%%%%%%%%%
%%%%%%%%%%%%%%%%%%%%%%%%%%%%%%%%%%%%%%%%%%%%%%%%%%%%%%%%%%%%%%%%%%%%%%%%%%
\subsection{The probability space $\left(\mathfrak{S}_{G},P_{t_1,\ldots,t_k}^{\Ewens}\right)$}
\label{SectionProbSpaceConstruction}
 It follows from Proposition \ref{PropositionProjection} that for any $t_1>0$, $\ldots$, $t_k>0$ the canonical projection $p_{n,n+1}: G_{n+1}\rightarrow G_n$ introduced in Section \ref{SectionProjection} preserves the measures $P_{t_1,\ldots,t_k;n}^{\Ewens}$. Hence the measure
$$
P_{t_1,\ldots,t_k}^{\Ewens}=\underset{\longleftarrow}{\lim}P_{t_1,\ldots,t_k;n}^{\Ewens}
$$
is correctly defined, $P_{t_1,\ldots,t_k}^{\Ewens}$ is a probability measure on $\mathfrak{S}_{G}$, and
$\left(\mathfrak{S}_{G},P_{t_1,\ldots,t_k}^{\Ewens}\right)$ is a probability space.
\begin{prop}\label{Proposition5.1.1} For each set of strictly positive parameters $t_1$, $\ldots$, $t_k$
the measure $P_{t_1,\ldots,t_k}^{\Ewens}$ is quasiinvariant with respect to the action of $G_{\infty}\times G_{\infty}$ on $\mathfrak{S}_{G}$. More precisely, the Radon-Nikodym derivative is given by
\begin{equation}\label{5.1.1.1}
\frac{P_{t_1,\ldots,t_k}^{\Ewens}\left(dxW\right)}{P_{t_1,\ldots,t_k}^{\Ewens}(dx)}
=t_1^{C_1\left(x,W\right)}\ldots t_k^{C_k\left(x,W\right)},
\; x\in\mathfrak{S}_{G},\; W\in G_{\infty}\times  G_{\infty},
\end{equation}
where $C_i\left(x,W\right)$ are the fundamental cocycles of the dynamical system
$\left(\mathfrak{S}_G, G_{\infty}\times G_{\infty}\right)$, see Theorem \ref{THEOREMCocycles}.
\end{prop}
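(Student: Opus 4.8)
The plan is to reduce the statement to a finite computation on $G_n=G\sim S(n)$ and then invoke Theorem \ref{THEOREMCocycles} to promote the resulting density into a genuine function on $\mathfrak{S}_G$. Fix $W=(w_1,w_2)\in G_{\infty}\times G_{\infty}$ and choose $n$ so large that $W\in G_n\times G_n$. Since $P_{t_1,\ldots,t_k}^{\Ewens}$ is the projective limit of the measures $P_{t_1,\ldots,t_k;n}^{\Ewens}$, its value on a level-$n$ cylinder set $p_n^{-1}(h)$ equals $P_{t_1,\ldots,t_k;n}^{\Ewens}(h)$ for each $h\in G_n$. By the equivariance of the projection (Proposition \ref{PROPOSITIONEQUIVARIANCEPROJECTION}, applied to $p_n$) together with the definition of the action in Section \ref{SubSectionWreathInfty}, the map $R_W\colon x\mapsto xW$ acts on the $n$-th coordinate by $p_n(xW)=w_2^{-1}\,p_n(x)\,w_1$. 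Consequently $R_W$ permutes the level-$n$ cylinder sets, carrying $p_n^{-1}(h)$ bijectively onto $p_n^{-1}\!\left(w_2^{-1}hw_1\right)$.

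I would then compute the ratio of the measures assigned to a cylinder and to its image. The decisive observation is that in the explicit formula (\ref{EwensMeasureWreathProduct}) the denominator $|G|^n\left(\frac{t_1}{\zeta_{c_1}}+\ldots+\frac{t_k}{\zeta_{c_k}}\right)_n$ depends only on $n$ and on $(t_1,\ldots,t_k)$, never on the group element. Hence in forming the ratio
\begin{equation*}
\frac{P_{t_1,\ldots,t_k;n}^{\Ewens}\!\left(w_2^{-1}hw_1\right)}{P_{t_1,\ldots,t_k;n}^{\Ewens}(h)}
=\prod\limits_{l=1}^{k} t_l^{\,[w_2^{-1}hw_1]_{c_l}-[h]_{c_l}}
\end{equation*}
the denominators cancel and only the numerators survive. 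Writing $h=p_n(x)$, so that $w_2^{-1}hw_1=p_n(xW)$, the exponent is exactly $[p_n(xW)]_{c_l}-[p_n(x)]_{c_l}$, which by Theorem \ref{THEOREMCocycles} equals the fundamental cocycle $C_l(x,W)$ and, crucially, is independent of the truncation level $n$. This is precisely where Theorem \ref{THEOREMCocycles} is indispensable: it converts the level-$n$ exponent into a well-defined measurable function on all of $\mathfrak{S}_G$.

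Finally I would assemble these facts into the Radon--Nikodym statement. Interpreting $\frac{P(dxW)}{P(dx)}$ as the density of the measure $\nu(A):=P_{t_1,\ldots,t_k}^{\Ewens}(AW)$ with respect to $P_{t_1,\ldots,t_k}^{\Ewens}$, the computation above gives, for every level-$n$ cylinder $A=p_n^{-1}(h)$, the identity $\nu(A)=\prod_{l}t_l^{C_l(x,W)}\,P_{t_1,\ldots,t_k}^{\Ewens}(A)$, with $C_l(\cdot,W)$ constant on $A$. Since the cylinder sets form an algebra generating the Borel $\sigma$-algebra of $\mathfrak{S}_G$, and the candidate density $x\mapsto\prod_{l}t_l^{C_l(x,W)}$ is constant on the cylinders at each fixed level, the relation $\nu(A)=\int_A\prod_{l}t_l^{C_l(x,W)}\,P_{t_1,\ldots,t_k}^{\Ewens}(dx)$ extends from cylinders to all Borel sets by the standard monotone-class (Carath\'eodory extension) argument. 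This yields both the quasiinvariance of $P_{t_1,\ldots,t_k}^{\Ewens}$ and the asserted formula (\ref{5.1.1.1}).

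I expect the only genuinely delicate point to be the bookkeeping of the group action: one must verify carefully that the combined left multiplication by $w_2^{-1}$ and right multiplication by $w_1$ produces the cocycle $C_l(x,W)$ for $W$ itself, rather than for $W^{-1}$, which hinges on the precise conventions for the action $xW$ and for the definition of $\nu$. Everything else --- the cancellation of denominators and the extension from the generating algebra of cylinders --- is routine once Theorem \ref{THEOREMCocycles} supplies a level-independent density.
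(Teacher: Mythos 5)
Your proposal is correct and takes essentially the same route as the paper's own proof: both reduce the identity (\ref{5.1.1.1}) to level-$n$ cylinder sets $V_n(y)=p_n^{-1}(y)$ on which the candidate density is constant, use the fact that the denominator in (\ref{EwensMeasureWreathProduct}) is independent of the group element so that the ratio of Ewens weights is exactly $\prod_l t_l^{[yW]_{c_l}-[y]_{c_l}}$, invoke Theorem \ref{THEOREMCocycles} for the level-independence of the exponents, and extend from the generating family of cylinders to all Borel sets. The bookkeeping issue you flag at the end is handled precisely as you anticipate, via the equivariance of the projections (Proposition \ref{PROPOSITIONEQUIVARIANCEPROJECTION}), which gives $p_n(xW)=w_2^{-1}p_n(x)w_1$ and the identity $V_n(y)W=V_n(yW)$ used in the paper.
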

\begin{proof}
We need to check that  the equation
\begin{equation}\label{5.1.1.2}
P_{t_1,\ldots,t_k}^{\Ewens}\left(VW\right)
=\int\limits_{V}t_1^{C_1\left(x,W\right)}\cdots t_k^{C_k\left(x,W\right)}
P_{t_1,\ldots,t_k}^{\Ewens}\left(dx\right)
\end{equation}
is satisfied for every Borel subset $V\subseteq\mathfrak{S}_{G}$.

Assume that $W\in G_m\times G_m$, $n\geq m$, and choose $y\in G_n$.
Define $V_n(y)$ as a subset of $\mathfrak{S}_{G}$ consisting of the sequences
$\left(x_1, x_2,\ldots\right)\in\mathfrak{S}_{G}$ with the property $x_n=y$.
Note that each Borel set is generated by cylinder sets, and each cylinder set is a disjoint union of the sets of the form $V_n(y)$. Therefore, it is enough to check (\ref{5.1.1.2}) in the case $V=V_n(y)$.

Next observe that the function
$$
t_1^{C_1\left(.,W\right)}\cdots t_k^{C_k\left(.,W\right)}:\;\mathfrak{S}_{G}\longrightarrow\R
$$
is constant on $V_n(y)$, and its value on this set is given by
$$
t_1^{\left[y W\right]_{c_1}-[y]_{c_1}}\ldots t_k^{\left[yW\right]_{c_k}-\left[y\right]_{c_k}},
$$
see Theorem \ref{THEOREMCocycles}. Also,
$$
P_{t_1,\ldots,t_k}^{\Ewens}\left(V_n(y)\right)=P_{t_1,\ldots,t_k;n}^{\Ewens}\left(y\right)
$$
by the very construction of $P_{t_1,\ldots,t_k}^{\Ewens}$, see Section \ref{SectionProbSpaceConstruction}. We conclude that if $V=V_n(y)$
then the integral in the right-hand of equation (\ref{5.1.1.2}) is equal to
\begin{equation}
t_1^{\left[y W\right]_{c_1}-[y]_{c_1}}\ldots t_k^{\left[yW\right]_{c_k}-\left[y\right]_{c_k}}
P_{t_1,\ldots,t_k;n}^{\Ewens}\left(y\right)=P_{t_1,\ldots,t_k;n}^{\Ewens}\left(yW\right).
\end{equation}
Since $V_n(y) W=V_n\left(yW\right)$, and
$P_{t_1,\ldots,t_k}^{\Ewens}\left(V_n\left(yW\right)\right)=
P_{t_1,\ldots,t_k;n}^{\Ewens}\left(yW\right)$, we see that equation (\ref{5.1.1.2}) holds true indeed.
\end{proof}
%%%%%%%%%%%%%%%%%%%%%%%%%%%%%%%%%%%%%%%%%%%%%%%%%%%%%%%%%%%%%%%%%%%%%%
%%%%%%%%%%%%%%%%%%%%%%%%%%%%%%%%%%%%%%%%%%%%%%%%%%%%%%%%%%%%%%%%%%%%%%%%%%%
\subsection{The correspondence between $P_{t_1,\ldots,t_k}^{\Ewens}$ and the multiple Poisson-Dirichlet distribution $PD\left(T_1,\ldots,T_k\right)$.}
\label{CorPPD}
Observe that $P_{t_1,\ldots,t_k}^{\Ewens}$ is a central probability measure on $\mathfrak{S}_{G}$. This follows from  the representation $P_{t_1,\ldots,t_k}^{\Ewens}=\underset{\longleftarrow}{\lim} P_{t_1,\ldots,t_k; n}^{\Ewens}$, and from the invariance of the probability measure
$P_{t_1,\ldots,t_k; n}^{\Ewens}$ on $G_n$ under the action on itself by conjugations. Also, this agrees with the fact that
$$
C_l\left(x,W\right)=0,\;\; W\in\diag\left(G_{\infty}\right),
$$
for each $l=1,\ldots,k$ in formula (\ref{5.1.1.1}) (as it follows from the very definition of the fundamental cocycles $C_l\left(x,W\right)$ in the statement of Theorem \ref{THEOREMCocycles}).

By Theorem \ref{THEOREMCENTRALMEASURES}  there is a unique probability measure
on the set $\overline{\nabla}^{(k)}$ defined by equation (\ref{SetNabla})
which corresponds to $P_{t_1,\ldots,t_k}^{\Ewens}$. This is the \textit{multiple Poisson-Dirichlet distribution} $PD\left(T_1,\ldots,T_k\right)$ introduced in Strahov \cite{StrahovMPS}, Section 4.4. The parameters  $T_1$, $\ldots$, $T_k$ are defined in terms of $t_1$, $\ldots$, $t_k$ by equation (\ref{5.4.a}). The multiple Poisson-Dirichlet distribution is a generalization of the Poisson-Dirichlet distribution $PD(t)$ (see Kingman \cite{Kingman1}), and it is defined as follows.

Recall that  the Poisson-Dirichlet distribution $PD(t)$ can be understood as the Poisson-Dirichlet limit of the Dirichlet distribution $D(\tau_1,\ldots,\tau_M)$ with density
\begin{equation}\label{PoissonDirichletDensity}
\frac{\Gamma\left(\tau_1+\ldots+\tau_M\right)}{\Gamma\left(\tau_1\right)\ldots\Gamma\left(\tau_M\right)}
x_1^{\tau_1-1}x_2^{\tau_2-1}\ldots x_M^{\tau_M-1}
\end{equation}
relative to the $(M-1)$- dimensional Lebesgue measure on the simplex
$$
\triangle_M=\left\{(x_1,\ldots,x_M):\; x_i\geq 0,\; x_1+\ldots+x_M=1\right\},
$$
where $\tau_1$, $\ldots$, $\tau_M$ are strictly positive parameters.
Assume that $(x_1,\ldots,x_M)$ has the Dirichlet distribution with equal parameters,
$$
\tau_1=\ldots=\tau_M=\frac{t}{M-1}.
$$
If
$
x_{(1)}\geq x_{(2)}\geq\ldots\geq x_{(M)}
$
denote the $x_j$ arranged in descending order, then $x_{(1)}$, $x_{(2)}$, $\ldots$ converge in joint distribution
as $M\longrightarrow\infty$, the limit is $PD(t)$. The Poisson-Dirichlet distribution
$PD(t)$ is concentrated on the set
\begin{equation}\label{nablazero}
\overline{\nabla}_0^{(1)}=\left\{x=\left(x_1,x_2,\ldots,\right):\;
x_1\geq x_2\geq\ldots\geq 0,\;\sum\limits_{i=1}^{\infty}x_i=1\right\}.
\end{equation}

Let $t_1>0$, $\ldots$, $t_k>0$.  For each $l$, $1\leq l\leq k$, let $x^{(l)}=\left(x^{(l)}_1,x^{(l)}_2,\ldots \right)$
be independent sequences of random variables such that
$$
x^{(l)}\sim PD(t_l),\; l=1,\ldots, k.
$$
Furthermore, let  $\delta^{(1)}$, $\ldots$, $\delta^{(k)}$ be random variables independent of $x^{(1)}$, $\ldots$, $x^{(k)}$, and such that joint distribution of
 $\delta^{(1)}$, $\ldots$, $\delta^{(k)}$ is the Dirichlet distribution $D\left(t_1,\ldots,t_k\right)$.
The joint distribution of the sequences
$\delta^{(1)}x^{(1)}$, $\ldots$, $\delta^{(k)}x^{(k)}$ is called the \textit{multiple Poisson-Dirichlet distribution}
$PD(t_1,\ldots,t_k)$.\\
The distribution $PD(t_1,\ldots,t_k)$ is concentrated on
\begin{equation}\label{GeneralizedThomaSet}
\begin{split}
\overline{\nabla}_0^{(k)}=&\biggl\{(x,\delta)\biggr|
x=\left(x^{(1)},\ldots,x^{(k)}\right),
\delta=\left(\delta^{(1)},\ldots,\delta^{(k)}\right);\\
&x^{(l)}=\left(x^{(l)}_1,x^{(l)}_2,\ldots\right),
x_1^{(l)}\geq x_2^{(l)}\geq\ldots\geq 0,\; 1\leq l\leq k,\\
&
\delta^{(1)}\geq 0,\ldots,\delta^{(k)}\geq 0,\\
&\mbox{where}\;\;\sum\limits_{i=1}^{\infty}x_i^{(l)}=\delta^{(l)},\; 1\leq l\leq k,\;\mbox{and}\;\sum\limits_{l=1}^k\delta^{(l)}=1\biggl\}.
\end{split}
\nonumber
\end{equation}
 If $k=1$, the multiple Poisson-Dirichlet distribution turns into the usual Poisson-Dirichlet distribution $PD(t_1)$.

The fact that $PD\left(T_1,\ldots, T_k\right)$ corresponds to $P_{t_1,\ldots,t_k}^{\Ewens}$ is a consequence of Theorem 4.3 in Strahov \cite{StrahovMPS}. Indeed, as it is explained in the proof of Theorem \ref{THEOREMCENTRALMEASURES}, the probability measure $P_{t_1,\ldots,t_k}^{\Ewens}$
gives rise to the multiple partition structure
$\left(\mathcal{M}_{t_1,\ldots,t_k; n}^{\Ewens}\right)_{n=1}^{\infty}$.
Theorem 4.3 in Strahov \cite{StrahovMPS} provides a representation of
$\left(\mathcal{M}_{t_1,\ldots,t_k; n}^{\Ewens}\right)_{n=1}^{\infty}$
in terms of $PD\left(T_1,\ldots, T_k\right)$.
%%%%%%%%%%%%%%%%%%%%%%%%%%%%%%%%%%%%%%%%%%%%%%%%%%%%%%%%%%%%%%%%%%%%%%%%%
%%%%%%%%%%%%%%%%%%%%%%%%%%%%%%%%%%%%%%%%%%%%%%%%%%%%%%%%%%%%%%%%%%%%%%%%%%%%%
%%%%%%%%%%%%%%%%%%%%%%%%%%%%%%%%%%%%%%%%%%%%%%%%%%%%%%%%%%%%%%%%%%%%%%%%%%%%%
\section{Generalized regular representation}\label{SectionGeneralizedRepresentations}
In this Section we construct a representation $T_{z_1,\ldots,z_k}$ of the group
$G_{\infty}\times G_{\infty}$, and show that $T_{z_1,\ldots,z_k}$ can be understood as the inductive limit of the two-sided regular representations
of $G_n\times G_n$.  We begin with a description of inductive limits of representations for chains of finite groups.
%%%%%%%%%%%%%%%%%%%%%%%%%%%%%%%%%%%%%%%%%%%%%%%%%%%%%%%%%%%%%%%%%%%%%%%%%
%%%%%%%%%%%%%%%%%%%%%%%%%%%%%%%%%%%%%%%%%%%%%%%%%%%%%%%%%%%%%%%%%%%%%%%%%%%%%
%%%%%%%%%%%%%%%%%%%%%%%%%%%%%%%%%%%%%%%%%%%%%%%%%%%%%%%%%%%%%%%%%%%%%%%%%%%%%
\subsection{Inductive limits of representations}\label{SectionInductive Limits}
Let $G(1)\subseteq G(2)\subseteq\ldots $ be a collection of finite groups. Set
$G=\bigcup\limits_{n=1}^{\infty}G(n)$.  The group $G$ is called the inductive limit of $G(1)$, $G(2)$, $\ldots$.
Assume that for each $n=1,2,\ldots $ a unitary representation $\left(T_n,H(T_n)\right)$ of $G(n)$ is defined (here $H\left(T_n\right)$
denotes the Hilbert space in which $T_n$ acts).  In addition, assume that for each $n$  a linear map
$f_n\; :H\left(T_n\right)\rightarrow H\left(T_{n+1}\right)$ which is an isometric embedding of $H\left(T_n\right)$ into $H\left(T_{n+1}\right)$ is defined, and that this embedding intertwines the $G(n)$-representations $T_n$ and $T_{n+1}|_{G(n)}$,
i.e. the condition
\begin{equation}\label{3.6.1}
T_{n+1}|_{G(n)}(g) f_n=f_nT_n(g)
\end{equation}
is satisfied for each $g\in G(n)$. Denote by $H$ the Hilbert completion of the space
$\bigcup\limits_{n=1}^{\infty}H\left(T_n\right)$.
In the space $H$   a unitary representation $T$ of the group $G$ arises which is
uniquely defined by the formula
\begin{equation}\label{3.6.2}
T(g)\zeta=T_n(g)\zeta,\;\;\mbox{if}\; g\in G(n)\; \mbox{and}\; \zeta\in H\left(T_n\right).
\end{equation}
The representation $\left(T,H\right)$ is called the \textit{inductive limit of the representations} $\left(T_1, H\left(T_1\right)\right)$,
$\left(T_2, H\left(T_2\right)\right)$, $\ldots$, see
Olshanski \cite{Olshanski}, Section 1.16.
  \begin{figure}[t]
%\begin{center}
{\scalebox{0.6}{\includegraphics{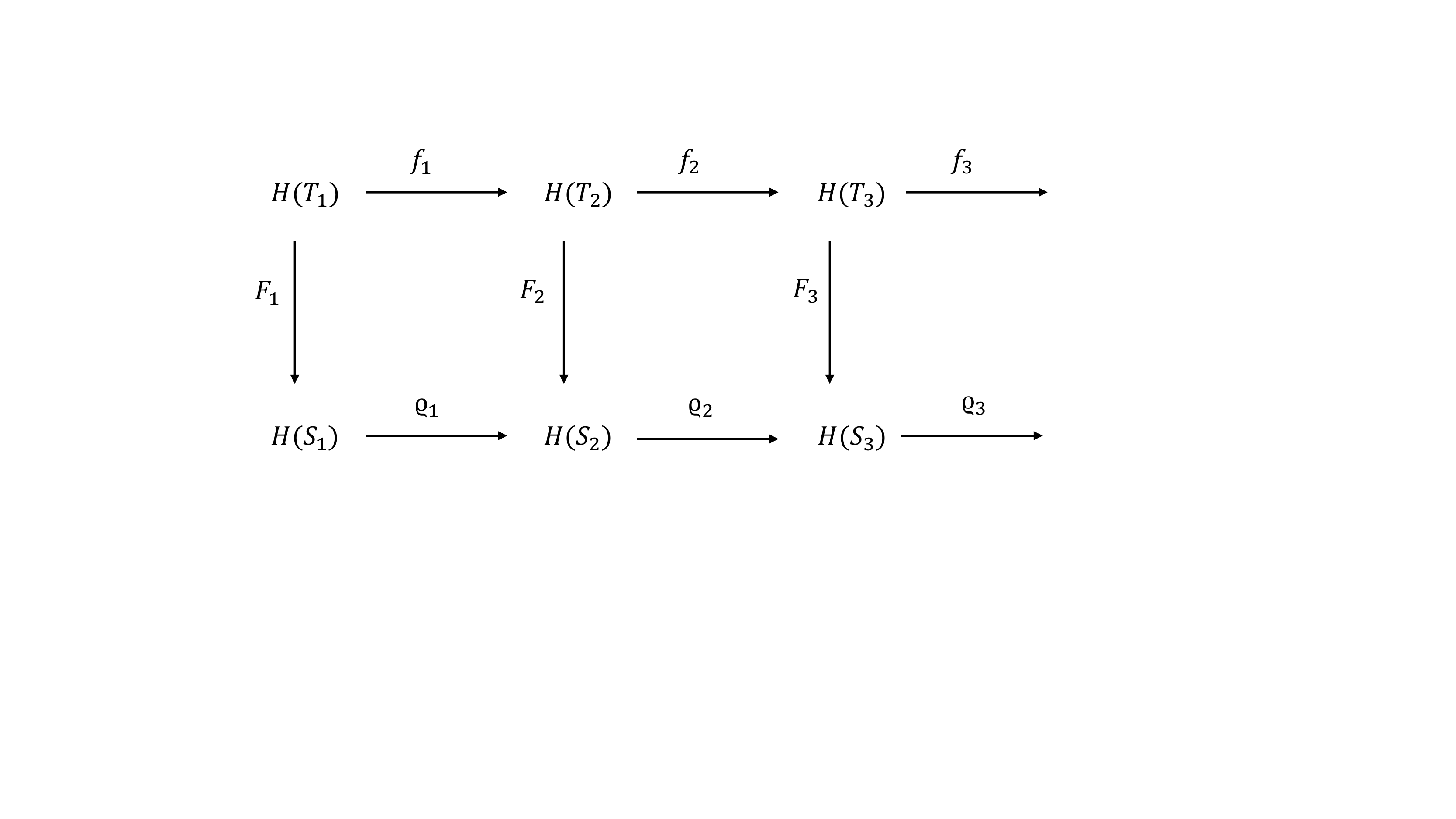}}}
%\end{center}
\caption{The maps between the representation spaces}
\label{Fig.5}
\end{figure}

\begin{prop}\label{PropositionEqvInd}
Let $\left\{T_n\right\}_{n=1}^{\infty}$ and $\left\{S_n\right\}_{n=1}^{\infty}$ be collections of representations of finite groups $G(1)$, $G(2)$, $\ldots$ respectively, where $G(1)\subseteq G(2)\subseteq\ldots$.
Consider the  diagram shown on Fig.\ref{Fig.5}, and
assume that for each $n=1,2,\ldots$ the following conditions are satisfied:
\begin{itemize}
\item The linear map $F_n$ is from $H\left(T_n\right)$ onto $H\left(S_n\right)$, and it intertwines the $G(n)$-representations $T_n$ and $S_n$.
\item The linear map $f_n$ is an isometric embedding of $H\left(T_n\right)$ into $H\left(T_{n+1}\right)$, and it intertwines  the $G(n)$-representations $T_n$ and $T_{n+1}|_{G(n)}$.
\item The map $\varrho_n$ is an isometric embedding of $H\left(S_n\right)$ into $H\left(S_{n+1}\right)$ such that
the condition $F_{n+1}f_n=\varrho_nF_n$ is satisfied. In other words, the $n$th block of the diagram
on Fig.\ref{Fig.5} is commutative.
\end{itemize}
Then the inductive limits of $\left\{T_n\right\}_{n=1}^{\infty}$ and $\left\{S_n\right\}_{n=1}^{\infty}$ are well defined, and these inductive limits are equivalent representations.
\end{prop}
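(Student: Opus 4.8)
The plan is to assemble the family $\{F_n\}$ into a single operator $F$ between the two Hilbert completions and to verify that it is a unitary carrying $T$ to $S$. First I would recall the construction of the inductive limit in the precise form needed here: $H(T)$ is the completion of $\bigcup_{n}H(T_n)$, where each $H(T_n)$ is viewed as a subspace of $H(T_{n+1})$ through the isometric embedding $f_n$ (and hence of every later $H(T_m)$ through the composition $f_{m-1}\cdots f_n$), and $T$ acts by (\ref{3.6.2}); symmetrically, $H(S)$ is the completion of $\bigcup_n H(S_n)$ with the embeddings $\varrho_n$, carrying the action $S$. Both unions are dense in their completions by construction.

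Next I would define $F$ on the dense subspace $\bigcup_n H(T_n)\subseteq H(T)$ by declaring that its restriction to $H(T_n)$ equals $F_n$. The crucial point, which I expect to be the real content of the argument, is that this prescription is \emph{consistent}: a vector $\xi\in H(T_n)$ is simultaneously an element of $H(T_{n+1})$, namely $f_n\xi$, so I must check that $F_{n+1}(f_n\xi)$ and $F_n(\xi)$ represent the same element of $H(S)$. Since $F_n(\xi)\in H(S_n)$ is identified in $H(S)$ with $\varrho_n F_n(\xi)$, consistency amounts to exactly the relation $F_{n+1}f_n=\varrho_n F_n$ assumed in the hypotheses. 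Thus the commutativity of each block of Figure~\ref{Fig.5} is precisely what makes $F$ well defined across consecutive levels, and therefore across all levels by iteration; this is the only step that genuinely uses the structure of the diagram, and everything afterward is routine.

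It then remains to check that $F$ is unitary and intertwining. Using that each $F_n$ is a surjective intertwining isometry, i.e.\ a unitary isomorphism of the representations $T_n$ and $S_n$, together with the isometry of $f_n$ and $\varrho_n$, one gets for $\xi\in H(T_n)$ the chain $\|F\xi\|_{H(S)}=\|F_n\xi\|_{H(S_n)}=\|\xi\|_{H(T_n)}=\|\xi\|_{H(T)}$, so $F$ is isometric on its dense domain and extends to an isometry $H(T)\to H(S)$; its range contains every $H(S_n)$ because $F_n$ is onto, hence is dense, so the extension is a unitary operator. Finally, for $g\in G(n)$ and $\xi\in H(T_m)$ with $m\geq n$, one has $F T(g)\xi=F_m T_m(g)\xi=S_m(g)F_m\xi=S(g)F\xi$ by (\ref{3.6.2}) and the fact that $F_m$ intertwines $T_m$ and $S_m$; passing to the limit over the dense subspace $\bigcup_m H(T_m)$ and invoking continuity of $F$, $T(g)$ and $S(g)$ yields $F T(g)=S(g)F$ for all $g\in G$. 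This exhibits $F$ as a unitary intertwiner, so the inductive limits $T$ and $S$ are equivalent, as claimed.
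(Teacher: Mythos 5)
Your overall architecture (define $F$ on the dense union $\bigcup_n H(T_n)$ by $F|_{H(T_n)}=F_n$, use the commutation relation $F_{n+1}f_n=\varrho_nF_n$ for consistency across levels, extend by continuity, and verify the intertwining $FT(g)=S(g)F$ on the dense subspace) matches the second half of the paper's proof, and your treatment of well-definedness and surjectivity of $F$ is, if anything, more explicit than the paper's. But there is a genuine gap at the very first step, where you write that ``symmetrically, $H(S)$ is the completion of $\bigcup_n H(S_n)$ with the embeddings $\varrho_n$, carrying the action $S$.'' The hypotheses are \emph{not} symmetric: $f_n$ is assumed to intertwine $T_n$ and $T_{n+1}|_{G(n)}$ (condition (\ref{3.6.1})), whereas $\varrho_n$ is only assumed to be an isometric embedding making the $n$th block of Fig.\ref{Fig.5} commute --- it is \emph{not} assumed to intertwine $S_n$ and $S_{n+1}|_{G(n)}$. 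Without that property, formula (\ref{3.6.2}) does not define a representation $S$ on the completion: for $\tilde\zeta\in H(S_n)$ and $g\in G(n)$ the two prescriptions $S_n(g)\tilde\zeta$ and $S_{n+1}(g)\varrho_n\tilde\zeta$ need not agree. Your final computation $FT(g)\xi=F_mT_m(g)\xi=S_m(g)F_m\xi=S(g)F\xi$ tacitly invokes exactly this unproven compatibility. This is why the conclusion of the Proposition explicitly includes ``the inductive limits \dots\ are well defined,'' and why the bulk of the paper's proof is devoted to deriving the intertwining of $\varrho_n$ from the other hypotheses: given $\tilde\zeta\in H(S_n)$, write $\tilde\zeta=F_n\zeta$ (this is where the surjectivity of $F_n$ is essential), and compute $S_{n+1}(g)\varrho_nF_n\zeta=S_{n+1}(g)F_{n+1}f_n\zeta=F_{n+1}T_{n+1}(g)f_n\zeta$, while $\varrho_nS_n(g)F_n\zeta=\varrho_nF_nT_n(g)\zeta=F_{n+1}f_nT_n(g)\zeta=F_{n+1}T_{n+1}(g)f_n\zeta$; comparing the two sides yields (\ref{z743}). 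Once you insert this verification before constructing $F$, your argument goes through.

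A secondary, smaller point: you assert that each $F_n$ is a ``surjective intertwining isometry,'' but the stated hypotheses only say $F_n$ is onto and intertwining, not isometric. Some such assumption is indeed needed for $F$ to extend continuously to the Hilbert completions (the paper's own proof glosses over this), and it does hold in the paper's application (Proposition \ref{Proposition3.7.1}), so your reading is defensible --- but you should flag it as an added hypothesis rather than use it silently, since it is the only ingredient that makes your norm chain $\|F\xi\|_{H(S)}=\|F_n\xi\|_{H(S_n)}=\|\xi\|_{H(T_n)}=\|\xi\|_{H(T)}$ valid.
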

\begin{proof} Let us check that the isometric embedding $\varrho_n:\; H\left(S_n\right)\rightarrow H\left(S_{n+1}\right)$ intertwines the $G(n)$-representations $S_n$ and $S_{n+1}|_{G(n)}$, i.e. let us check that the condition
\begin{equation}\label{z743}
S_{n+1}(g)\varrho_n\tilde{\zeta}=\varrho_nS_n(g)\tilde{\zeta}
\end{equation}
is satisfied for each $\tilde{\zeta}\in H\left(S_n\right)$, and each $g\in G(n)$.
Consider the left-hand side of equation (\ref{z743}).
Since $F_n :\; H\left(T_n\right)\rightarrow H\left(S_n\right)$
is onto, $\tilde{\zeta}=F_n\zeta$ for some $\zeta\in H\left(T_n\right)$. Also,
$$
\varrho_nF_n\zeta=F_{n+1}f_n\zeta
$$
by the commutativity of the $n$th block in the diagram shown on Fig.\ref{Fig.5}. In addition, since $F_{n+1}$ intertwines the representations $T_{n+1}$ and $S_{n+1}$,
we have
$$
S_{n+1}(g)F_{n+1}=F_{n+1}T_{n+1}(g),\;\;\forall g\in G(n).
$$
Thus the left-hand side of equation (\ref{z743}) is equal to $F_{n+1}T_{n+1}(g)f_n\zeta$.

Now consider the right-hand side of equation(\ref{z743}). We have
$$
S_n(g)F_n\zeta=F_nT_n(g)\zeta,\;\forall g\in G(n),
$$
since the map $F_n$ intertwines the $G(n)$-representations $T_n$ and $S_n$. By the commutativity of the
$n$th block in the diagram shown on Fig.\ref{Fig.5},
$$
\varrho_nF_n=F_{n+1}f_n.
$$
Finally,
$$
f_nT_n(g)=T_{n+1}(g)f_n,\;\; \forall g\in G(n)
$$
since the isometric embedding $f_n: H\left(T_n\right)\rightarrow H\left(T_{n+1}\right)$ intertwines the $G(n)$-representations $T_n$ and $T_{n+1}|_{G(n)}$.
Thus we conclude that the right-hand side of equation (\ref{z743}) can be written as $F_{n+1}T_{n+1}(g)f_n\zeta$ as well, and condition (\ref{z743}) is satisfied.

Let $H$ be the Hilbert completion of the space $\bigcup_{n=1}^{\infty}H\left(T_n\right)$, and let $\tilde{H}$ be the Hilbert completion of the space
$\bigcup_{n=1}^{\infty}H\left(S_n\right)$.  Since $f_n$ intertwines the $G(n)$-representations  $T_n$ and $T_{n+1}|_{G(n)}$, and  $\varrho_n$ intertwines the $G(n)$-representations $S_n$ and $S_{n+1}|_{G(n)}$,
the inductive limit $(T,H)$ of $\left(T_n,H\left(T_n\right)\right)$, and
the inductive limit $(S,\tilde{H})$ of $\left(S_n,H\left(S_n\right)\right)$ can be defined. Introduce the map
$F:\; H\rightarrow\tilde{H}$ by the condition that $F|_{H\left(T_n\right)}=F_n$. It remains to check that $F$ intertwines the representations $T$ and $S$.
This follows from the fact that for each $n$ the linear map $F_n$ intertwines $T_n$ and $S_n$.
\end{proof}
%%%%%%%%%%%%%%%%%%%%%%%%%%%%%%%%%%%%%%%%%%%%%%%%%%%%%%
%%%%%%%%%%%%%%%%%%%%%%%%%%%%%%%%%%%%%%%%%%%%%%%%%%%%
%%%%%%%%%%%%%%%%%%%%%%%%%%%%%%%%%%%%%%%%%%%%%%%%%%%%
%%%%%%%%%%%%%%%%%%%%%%%%%%%%%%%%%%%%%%%%%%%%%%%%%%%%
%%%%%%%%%%%%%%%%%%%%%%%%%%%%%%%%%%%%%%%%%%%%%%%%%%%%
%%%%%%%%%%%%%%%%%%%%%%%%%%%%%%%%%%%%%%%%%%%%%%%%%%%%
%%%%%%%%%%%%%%%%%%%%%%%%%%%%%%%%%%%%%%%%%%%%%%%%%%%%
\subsection{The generalized regular representation of $G_{\infty}\times G_{\infty}$}
\label{SectionTz1z2}
%%%%%%%%%%%%%%%%%%%%%%%%%%%%%%%%%%%%%%%%%%%%%%%%%%%%
%%%%%%%%%%%%%%%%%%%%%%%%%%%%%%%%%%%%%%%%%%%%%%%%%%%%
%%%%%%%%%%%%%%%%%%%%%%%%%%%%%%%%%%%%%%%%%%%%%%%%%%%%
%%%%%%%%%%%%%%%%%%%%%%%%%%%%%%%%%%%%%%%%%%%%%%%%%%%%
%%%%%%%%%%%%%%%%%%%%%%%%%%%%%%%%%%%%%%%%%%%%%%%%%%%%
%%%%%%%%%%%%%%%%%%%%%%%%%%%%%%%%%%%%%%%%%%%%%%%%%%%%
Fix $t_1>0$, $\ldots$, $t_k>0$ and $z_1$, $\ldots$, $z_k\in\C$ such that $|z_1|^2=t_1$, $\ldots$, $|z_k|^2=t_k$, and set
\begin{equation}\label{6.1.1}
\left(T_{z_1,\ldots,z_k}\left(W\right)f\right)(x)=f(xW)\;z_1^{C_1\left(x,W\right)}\ldots z_k^{C_k\left(x,W\right)},
\end{equation}
where $f\in L^{2}\left(\mathfrak{S}_{G}, P_{t_1,\ldots,t_k}^{\Ewens}\right)$, $W\in G_{\infty}\times G_{\infty}$,
the action of $G_{\infty}\times G_{\infty}$ on $\mathfrak{S}_G$ is defined as it is described in Section \ref{SubSectionWreathInfty}, and the functions $C_l\left(x,W\right)$ (where $l=1,\ldots,k$) are defined as in the statement of Theorem
\ref{THEOREMCocycles}, equation (\ref{Cocycles}). It is not hard to check that
$$
C_l\left(x,W_1W_2\right)=C_l\left(x,W_1\right)+C_l\left(xW_1,W_2\right)
$$
for each $l=1,\ldots,k$, each $x\in\mathfrak{S}_G$, and each $W_1, W_2\in G_{\infty}\times G_{\infty}$.
This implies that (\ref{6.1.1}) defines a representation of $G_{\infty}\times G_{\infty}$ in the space $L^2\left(\mathfrak{S}_G, P_{t_1,\ldots,t_k}^{\Ewens}\right)$.  Proposition \ref{Proposition5.1.1} can be applied to show that
$T_{z_1,\ldots,z_k}$ are unitary operators in $L^2\left(\mathfrak{S}_G, P_{t_1,\ldots,t_k}^{\Ewens}\right)$.
The representation $\left(T_{z_1,\ldots,z_k}, L^2\left(\mathfrak{S}_G, P_{t_1,\ldots,t_k}^{\Ewens}\right)\right)$ is called the \textit{generalized regular representation} of $G_{\infty}\times G_{\infty}$.

%%%%%%%%%%%%%%%%%%%%%%%%%%%%%%%%%%%%%%%%%%%%
%%%%%%%%%%%%%%%%%%%%%%%%%%%%%%%%%%%%%%%%%%%%%%%%%%%%%
%%%%%%%%%%%%%%%%%%%%%%%%%%%%%%%%%%%%%%%%%%%%%%%%%%%%%%%%
\subsection{The generalized regular representation as an inductive limit of the two-sided regular representations of $G_n\times G_{n}$}\label{Section6}
%%%%%%%%%%%%%%%%%%%%%%%%%%%%%%%%%%%%%%%%%%%%
%%%%%%%%%%%%%%%%%%%%%%%%%%%%%%%%%%%%%%%%%%%%%%%%%%%%%
%%%%%%%%%%%%%%%%%%%%%%%%%%%%%%%%%%%%%%%%%%%%%%%%%%%%%%%%
Recall that $G_n$ denotes the finite wreath product $G\sim S(n)$.
Let $\mu_n^{\Uniform}$ be the uniform probability measure on $G_n$. The two-sided regular representation of $G_n\times G_n$ is defined by
$$
\left(\Reg^n(g)f\right)(x)=f\left(g_2^{-1}xg_1\right),
$$
where $g=\left(g_1,g_2\right)\in G_n\times G_n$, $x\in G_n$, $f\in L^2\left(G_n,\mu_n^{\Uniform}\right)$.

In the terminology of Section \ref{SectionInductive Limits}, the Hilbert space $L^2\left(G_n,\mu_n^{\Uniform}\right)$ corresponds to $H\left(S_n\right)$, and the two-sided regular representation $\Reg^n$ of $G_n\times G_n$ corresponds to $S_n$.

Given $n=1,2,\ldots$, consider the canonical projection
$p_n: \mathfrak{S}_{G}\longrightarrow G_n$. A function $F=f\circ p_n$, where $f$ is
any function on $G_n$, is called a cylinder function of level $n$ on the space $\mathfrak{S}_{G}$. Denote the set of such functions by $\Cyl^n\left(\mathfrak{S}_{G}\right)$. Note that $\Cyl^n\left(\mathfrak{S}_{G}\right)$ is a subspace of $L^2\left(\mathfrak{S}_G, P^{\Ewens}_{t_1,\ldots,t_k}\right)$.
It is important that the image of $P_{t_1,\ldots,t_k}^{\Ewens}$ with respect to $p_n$ coincides with $P_{t_1,\ldots,t_k;n}^{\Ewens}$, as it follows from Proposition \ref{PropositionProjection}, and from the fact that
$P_{t_1,\ldots,t_k}^{\Ewens}$ is the projective limit measure,
$P_{t_1,\ldots,t_k}^{\Ewens}
=\underset{\longleftarrow}{\lim}P_{t_1,\ldots,t_k;n}^{\Ewens}$. This enables to identify $\Cyl^n\left(\mathfrak{S}_{G}\right)$ and $L^2\left(G_n, P^{\Ewens}_{t_1,\ldots,t_k;n}\right)$.

This representation $T_{z_1,\ldots,z_k}\biggl|_{L^2\left(G_n,P_{t_1,\ldots,t_k;n}^{\Ewens}\right)}$
is defined by
$$
\left(T_{z_1,\ldots,z_k}(g)\biggl|_{L^2\left(G_n,P_{t_1,\ldots,t_k;n}^{\Ewens}\right)}f\right)(x)=f(xg)\;z_1^{[xg]_{c_1}-[x]_{c_1}}\ldots z_k^{[xg]_{c_k}-[x]_{c_k}},
$$
where $f\in L^2\left(G_n,P_{t_1,\ldots,y_k;n}^{\Ewens}\right)$, $g=\left(g_1,g_2\right)\in G_n\times G_n$, $x\in G_n$. The representation
$T_{z_1,\ldots,z_k}\biggl|_{L^2\left(G_n,P_{t_1,\ldots,y_k;n}^{\Ewens}\right)}$ will play a role of $T_n$ in Section \ref{SectionInductive Limits}, and the Hilbert space
$L^2\left(G_n,P_{t_1,\ldots,y_k;n}^{\Ewens}\right)$ will correspond to $H\left(T_n\right)$.

Introduce  $F_{z_1,\ldots,z_k}^n$,
$$
F_{z_1,\ldots,z_k}^n:\;L^2\left(G_n,P_{t_1,\ldots,t_k;n}^{\Ewens}\right)\longrightarrow L^2\left(G_n,\mu_n^{\Uniform}\right),
$$
as the operator of multiplication by the function
\begin{equation}\label{3.7.1}
F_{z_1,\ldots,z_k}^n(x)=\frac{\left(n!\right)^{\frac{1}{2}}}{\left[\left(\frac{t_1}{\zeta_{c_1}}+\ldots+\frac{t_k}{\zeta_{c_k}}\right)_n\right]^{\frac{1}{2}}}
z_1^{[x]_{c_1}}z_2^{[x]_{c_2}}\ldots z_k^{[x]_{c_k}},\; x\in G_n,
\end{equation}
where $\zeta_{c_l}$ is defined as in Section \ref{SectionFiniteWreathProduct}.
\begin{prop}\label{Proposition3.7.1} The operator $F^n_{z_1,\ldots,z_k}$ is an isometry. Moreover, this operator intertwines the representations
$T_{z_1,\ldots,z_k}\biggl|_{L^2\left(G_n,P_{t_1,\ldots,t_k;n}^{\Ewens}\right)}$ and $Reg^n$ of $G_n\times G_n$.
\end{prop}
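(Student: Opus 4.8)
The plan is to verify the two assertions separately, both by direct pointwise computation, since the normalizing constant in $F^n_{z_1,\ldots,z_k}$ is tailored precisely to make the relevant cancellations work.

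First I would prove the isometry. Since $F^n_{z_1,\ldots,z_k}$ is the operator of multiplication by the function in (\ref{3.7.1}), for any $f\in L^2\left(G_n,P^{\Ewens}_{t_1,\ldots,t_k;n}\right)$ one has
\[
\left\| F^n_{z_1,\ldots,z_k} f \right\|^2_{L^2\left(G_n,\mu_n^{\Uniform}\right)}
= \sum_{x\in G_n} \left| F^n_{z_1,\ldots,z_k}(x) \right|^2 |f(x)|^2 \,\mu_n^{\Uniform}(x).
\]
Hence the isometry reduces to the pointwise identity $\left|F^n_{z_1,\ldots,z_k}(x)\right|^2\mu_n^{\Uniform}(x)=P^{\Ewens}_{t_1,\ldots,t_k;n}(x)$ for every $x\in G_n$. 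Using $\mu_n^{\Uniform}(x)=\left(|G|^n n!\right)^{-1}$ together with the relations $|z_l|^2=t_l$, which give $\left|z_l^{[x]_{c_l}}\right|^2=t_l^{[x]_{c_l}}$, I would compute
\[
\left| F^n_{z_1,\ldots,z_k}(x) \right|^2 \mu_n^{\Uniform}(x)
= \frac{n!}{\left(\frac{t_1}{\zeta_{c_1}}+\ldots+\frac{t_k}{\zeta_{c_k}}\right)_n}\,
t_1^{[x]_{c_1}}\cdots t_k^{[x]_{c_k}}\cdot\frac{1}{|G|^n n!},
\]
and observe that after the cancellation of $n!$ this is exactly the Ewens weight (\ref{EwensMeasureWreathProduct}). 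Thus the two norms agree for all $f$.

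Next I would check the intertwining relation $F^n_{z_1,\ldots,z_k}\,T_{z_1,\ldots,z_k}(g)=\Reg^n(g)\,F^n_{z_1,\ldots,z_k}$ for $g=(g_1,g_2)\in G_n\times G_n$, again pointwise. Writing $C$ for the constant normalizing factor, so that $F^n_{z_1,\ldots,z_k}(x)=C\,z_1^{[x]_{c_1}}\cdots z_k^{[x]_{c_k}}$, the left-hand side evaluated at $x$ is
\[
C\prod_{l=1}^k z_l^{[x]_{c_l}}\cdot f(xg)\prod_{l=1}^k z_l^{[xg]_{c_l}-[x]_{c_l}}
= C\,f(xg)\prod_{l=1}^k z_l^{[xg]_{c_l}},
\]
since the exponent $[x]_{c_l}$ cancels against the $-[x]_{c_l}$ coming from the cocycle in the definition of $T_{z_1,\ldots,z_k}$; here $xg=g_2^{-1}xg_1$ is the right action of Section \ref{SubSectionWreathInfty}. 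The right-hand side evaluated at $x$ equals $\left(F^n_{z_1,\ldots,z_k}f\right)(xg)=C\,f(xg)\prod_{l=1}^k z_l^{[xg]_{c_l}}$, which is the same expression. Hence the two operators coincide.

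The computation involves no real obstacle: the content lies entirely in the fact that the constant $(n!)^{1/2}/\left[\left(\cdots\right)_n\right]^{1/2}$ and the exponents $[x]_{c_l}$ were chosen so that (i) the square of the multiplier converts the uniform measure into the Ewens measure, using $|z_l|^2=t_l$, and (ii) the factor $\prod_l z_l^{[x]_{c_l}}$ transforms under $T_{z_1,\ldots,z_k}(g)$ exactly like $F^n_{z_1,\ldots,z_k}$ shifted to $xg$, because the restricted cocycle is $[xg]_{c_l}-[x]_{c_l}$ (Theorem \ref{THEOREMCocycles}). The only point to state with a little care is the consistent use of the right-action convention $xg=g_2^{-1}xg_1$. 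I would also note in passing that, since each $z_l\neq 0$, the function (\ref{3.7.1}) never vanishes, so $F^n_{z_1,\ldots,z_k}$ is in fact a bijection of these finite-dimensional spaces, hence onto; this surjectivity is what will later be needed to apply Proposition \ref{PropositionEqvInd}.
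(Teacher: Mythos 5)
Your proof is correct and coincides with the paper's intended argument: the paper's proof of Proposition \ref{Proposition3.7.1} consists of the single remark that both claims follow directly from the definitions, and your pointwise computations --- the identity $\left|F^n_{z_1,\ldots,z_k}(x)\right|^2\mu_n^{\Uniform}(x)=P^{\Ewens}_{t_1,\ldots,t_k;n}(x)$ for the isometry, and the cancellation of the cocycle exponent $-[x]_{c_l}$ against the multiplier for the intertwining --- are exactly the verification the paper leaves out. Your closing observation that $F^n_{z_1,\ldots,z_k}$ is onto because the multiplier never vanishes (as each $z_l\neq 0$) is a worthwhile addition, since surjectivity of the maps $F_n$ is precisely the hypothesis of Proposition \ref{PropositionEqvInd} used later in the proof of Theorem \ref{THEOREM6.3}.
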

\begin{proof} The proof is straightforward. Namely, the properties of $F_{z_1,\ldots,z_k}^{n}$ stated in Proposition \ref{Proposition3.7.1}
follow from the very definitions of the corresponding operator and the representations.
\end{proof}
 Define the operator $L_{z_1,\ldots,z_k}^n$,
$$
L_{z_1,\ldots,z_k}^n:\;  L^2\left(G_n,\mu_n^{\Uniform}\right)\longrightarrow L^2\left(G_{n+1},\mu_{n+1}^{\Uniform}\right),
$$
as
\begin{equation}\label{3.3.5}
\left(L_{z_1,\ldots,z_k}^n\psi\right)(x)=\sqrt{\frac{n+1}{\frac{t_1}{\zeta_{c_1}}+\ldots+\frac{t_k}{\zeta_{c_k}}}}\;\;
z_1^{[x]_{c_1}-\left[p_{n,n+1}\left(x\right)\right]_{c_1}}\ldots z_k^{[x]_{c_k}-\left[p_{n,n+1}\left(x\right)\right]_{c_k}}
\psi\left(p_{n,n+1}(x)\right),
\end{equation}
where  $\psi\in L^2\left(G_n,\mu_n^{\Uniform}\right)$, $x\in G_{n+1}$.
It is not hard to check that
for any non-zero complex numbers $z_1$, $\ldots$, $z_k$ the operator $L_{z_1,\ldots,z_k}^n$ is an isometric embedding of $L^2\left(G_n,\mu_n^{\Uniform}\right)$ into $L^2\left(G_{n+1},\mu_{n+1}^{\Uniform}\right)$,
which intertwines the $G_n\times G_n$-representations $\Reg^n$ and $\Reg^{n+1}\biggl|_{G_n\times G_n}$.
\begin{thm} \label{THEOREM6.3}
Let $T_{z_1,\ldots,z_k}'$ denote the inductive limit of the representations $\Reg^n$ with respect to the embedding
$$
 L^2\left(G_1,\mu_1^{\Uniform}\right)\overset{L_{z_1,\ldots,z_k}^1}{\longrightarrow} L^2\left(G_2,\mu_2^{\Uniform}\right)\overset{L_{z_1,\ldots,z_k}^2}{\longrightarrow}\ldots
$$
Then the representations $T_{z_1,\ldots,z_k}'$ and $T_{z_1,\ldots,z_k}$ are equivalent. In other words, the generalized regular representation $T_{z_1,\ldots,z_k}$ is equivalent to the inductive limit of the two-sided regular representations of $G_n\times G_{n}$.
\end{thm}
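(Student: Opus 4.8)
The plan is to deduce Theorem~\ref{THEOREM6.3} from the abstract equivalence criterion of Proposition~\ref{PropositionEqvInd}, applied to the two chains of finite-group representations laid out in the diagram on Fig.~\ref{Fig.5}. On the bottom row I take $S_n=\Reg^n$ acting on $H(S_n)=L^2(G_n,\mu_n^{\Uniform})$, with the connecting isometric embeddings $\varrho_n=L^n_{z_1,\ldots,z_k}$; by construction the inductive limit of this chain is exactly $T'_{z_1,\ldots,z_k}$. On the top row I take $T_n=T_{z_1,\ldots,z_k}\big|_{L^2(G_n,P^{\Ewens}_{t_1,\ldots,t_k;n})}$ acting on $H(T_n)=L^2(G_n,P^{\Ewens}_{t_1,\ldots,t_k;n})\cong\Cyl^n(\mathfrak{S}_G)$, with $f_n$ the tautological inclusion $\Cyl^n\hookrightarrow\Cyl^{n+1}$, and $F_n=F^n_{z_1,\ldots,z_k}$ as the vertical maps. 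The theorem then splits into two tasks: first, to show that the inductive limit of the top chain $\{T_n,f_n\}$ is the representation $T_{z_1,\ldots,z_k}$ itself; second, to verify the three hypotheses of Proposition~\ref{PropositionEqvInd} for this diagram.

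For the first task I would argue as follows. The cylinder functions $\bigcup_{n\ge 1}\Cyl^n(\mathfrak{S}_G)$ are dense in $L^2(\mathfrak{S}_G,P^{\Ewens}_{t_1,\ldots,t_k})$, so this space is the Hilbert completion of the increasing union of the $H(T_n)$. That $f_n$ is an isometric embedding follows from the consistency of the Ewens measures (Proposition~\ref{PropositionProjection}): the pushforward of $P^{\Ewens}_{t_1,\ldots,t_k}$ under $p_n$ is $P^{\Ewens}_{t_1,\ldots,t_k;n}$, so the $L^2$-norm of $\phi\circ p_n$ is unchanged when $\phi$ is reread at level $n+1$ via $\phi\circ p_{n,n+1}$. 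The linchpin is the identity $T_{z_1,\ldots,z_k}(W)\big|_{\Cyl^n}=T_n(W)$ for every $W\in G_n\times G_n$: equivariance of $p_n$ (Proposition~\ref{PROPOSITIONEQUIVARIANCEPROJECTION}) gives $p_n(xW)=p_n(x)W$, while Theorem~\ref{THEOREMCocycles} shows that $C_l(x,W)=[p_n(x)W]_{c_l}-[p_n(x)]_{c_l}$ is a level-$n$ cylinder function of $x$; hence $T_{z_1,\ldots,z_k}(W)$ carries $\phi\circ p_n$ to $(T_n(W)\phi)\circ p_n$ and stabilizes $\Cyl^n$. With this, $T_n$ and $T_{n+1}\big|_{G_n\times G_n}$ are both restrictions of the single operator family $T_{z_1,\ldots,z_k}(W)$, so the intertwining relation $T_{n+1}\big|_{G_n\times G_n}(W)f_n=f_nT_n(W)$ is automatic, and $T_{z_1,\ldots,z_k}$ is indeed the inductive limit of $\{T_n,f_n\}$.

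It remains to check the hypotheses of Proposition~\ref{PropositionEqvInd}. That $F_n=F^n_{z_1,\ldots,z_k}$ is onto and intertwines $T_n$ with $\Reg^n$ is Proposition~\ref{Proposition3.7.1} (onto because $F^n_{z_1,\ldots,z_k}$ is multiplication by a nowhere-vanishing function, hence invertible), and that $\varrho_n=L^n_{z_1,\ldots,z_k}$ is an isometric embedding intertwining $\Reg^n$ and $\Reg^{n+1}\big|_{G_n\times G_n}$ is recorded just before the theorem. The only genuine computation is the commutativity of the $n$th square, $F^{n+1}_{z_1,\ldots,z_k}f_n=L^n_{z_1,\ldots,z_k}F^n_{z_1,\ldots,z_k}$. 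Applying both sides to $\phi\circ p_n$ and evaluating at $x\in G_{n+1}$, the $z_l$-exponents telescope, since
\[
z_l^{[x]_{c_l}-[p_{n,n+1}(x)]_{c_l}}\,z_l^{[p_{n,n+1}(x)]_{c_l}}=z_l^{[x]_{c_l}},
\]
so both sides equal a common scalar multiple of $z_1^{[x]_{c_1}}\cdots z_k^{[x]_{c_k}}\phi(p_{n,n+1}(x))$; the two scalar prefactors then agree by the Pochhammer recursion $(a)_{n+1}=(a)_n(n+a)$ with $a=\frac{t_1}{\zeta_{c_1}}+\cdots+\frac{t_k}{\zeta_{c_k}}$, which is precisely the normalization built into $F^n_{z_1,\ldots,z_k}$ and $L^n_{z_1,\ldots,z_k}$. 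Proposition~\ref{PropositionEqvInd} then yields the equivalence of the two inductive limits, i.e. $T'_{z_1,\ldots,z_k}\cong T_{z_1,\ldots,z_k}$. I expect the main obstacle to be the first task — correctly identifying $T_{z_1,\ldots,z_k}$ as the inductive limit of its cylinder restrictions, where Theorem~\ref{THEOREMCocycles} and the equivariance of $p_n$ do the real work — rather than the bookkeeping in the commutative square.
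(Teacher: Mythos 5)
Your proposal is correct and takes essentially the same route as the paper: the paper's proof applies Proposition~\ref{PropositionEqvInd} to exactly this diagram, with your $f_n$ being the paper's embedding $\alpha_n\colon\varphi\mapsto\varphi\circ p_{n,n+1}$, $F_n=F^n_{z_1,\ldots,z_k}$, and $\varrho_n=L^n_{z_1,\ldots,z_k}$, after identifying $T_{z_1,\ldots,z_k}$ with the inductive limit of its restrictions to the cylinder subspaces $\Cyl^n\left(\mathfrak{S}_G\right)$. You merely make explicit steps the paper asserts without computation (the stabilization of $\Cyl^n\left(\mathfrak{S}_G\right)$ via Theorem~\ref{THEOREMCocycles} and the telescoping/Pochhammer verification of the commuting square, which incidentally shows the normalization in equation~(\ref{3.3.5}) should read $\sqrt{(n+1)/(T+n)}$ with $T=\frac{t_1}{\zeta_{c_1}}+\cdots+\frac{t_k}{\zeta_{c_k}}$, as your recursion $(T)_{n+1}=(T)_n(T+n)$ requires).
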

\begin{proof} Consider the diagram shown on Fig. \ref{Fig.6},
\begin{figure}[t]
%\begin{center}
{\scalebox{0.5}{\includegraphics{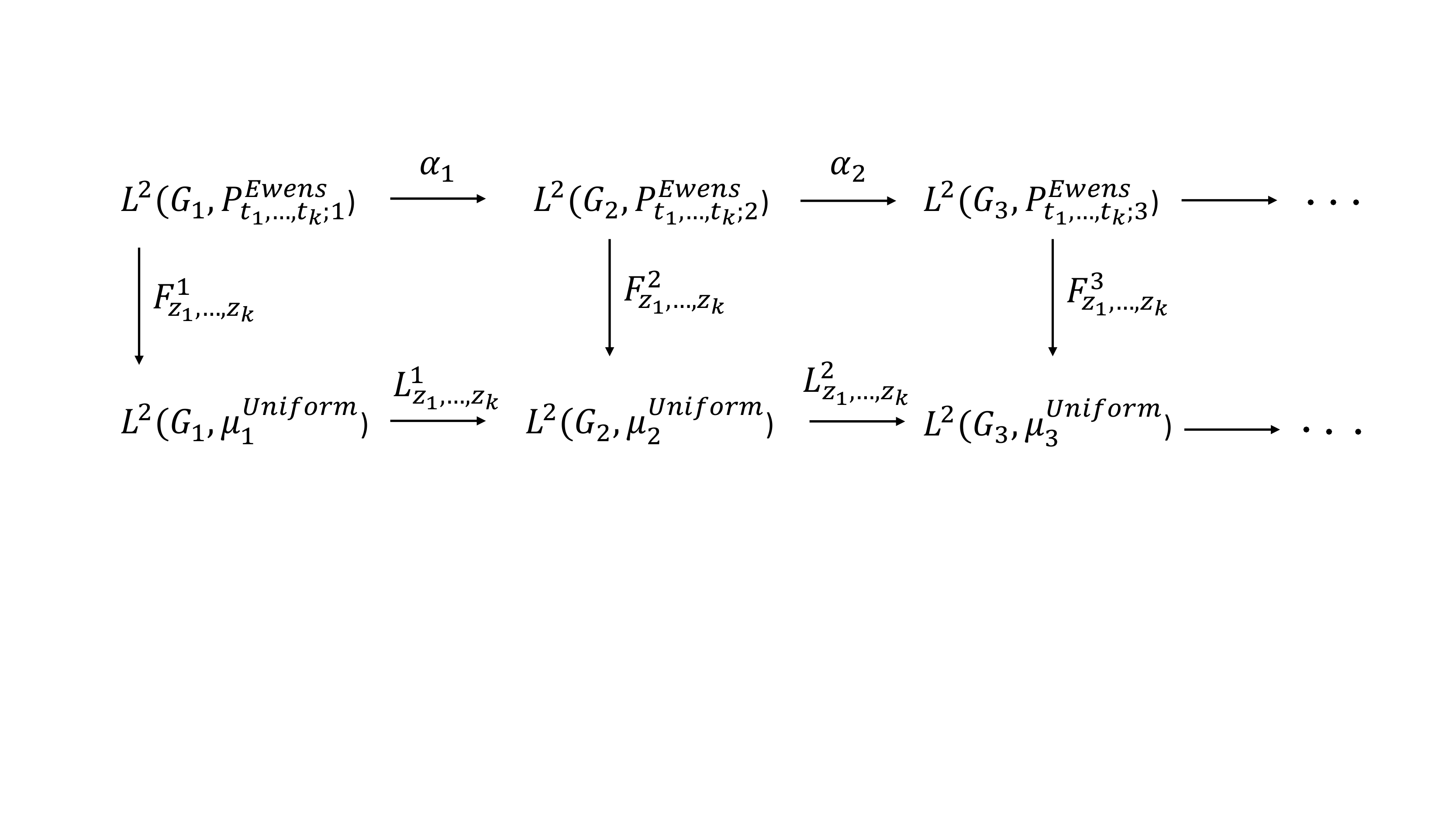}}}
%\end{center}
\caption{The equivalence of inductive limits}
\label{Fig.6}
\end{figure}
where the map
$$
\alpha_{n}:\; L^2\left(G_n,P_{t_1,\ldots,t_k;n}^{\Ewens}\right)\longrightarrow
L^2\left(G_{n+1},P_{t_1,\ldots,t_k;n+1}^{\Ewens}\right)
$$
is an isometric embedding defined by
$$
\left(\alpha_n\varphi\right)(x)=\varphi\left(p_{n,n+1}(x)\right),\;\; x\in G_{n+1},\;\;\varphi\in L^2\left(G_n, P_{t_1,\ldots,t_k;n}^{\Ewens}\right).
$$
Using the definition of $T_{z_1,\ldots,z_k}$ it is straightforward to verify that $\alpha_n$ intertwines the $G_n\times G_n$-representations  $T_{z_1,\ldots,z_k}\biggl|_{L^2\left(G_n,P_{t_1,\ldots,t_k;n}^{\Ewens}\right)}$ and
$T_{z_1,\ldots,z_k}\biggl|_{L^2\left(G_{n+1},P_{t_1,\ldots,t_k;n+1}^{\Ewens}\right)}$.

Recall
that we identify the Hilbert spaces $L^2\left(G_n, P_{t_1,\ldots,t_k;n}^{\Ewens}\right)$ and
$L^2\left(\Cyl^n\left(\mathfrak{S}_G\right), P_{t_1,\ldots,t_k}^{\Ewens}\right)$, the last one is the subspace of  $L^2\left(\mathfrak{S}_G, P_{t_1,\ldots,t_k}^{\Ewens}\right)$. Moreover,
$\bigcup_{n=1}^{\infty}L^2\left(\Cyl^n\left(\mathfrak{S}_G\right), P_{t_1,\ldots,t_k}^{\Ewens}\right)$ is a dense subset of $L^2\left(\mathfrak{S}_G, P_{t_1,\ldots,t_k}^{\Ewens}\right)$.
It follows that the representation
$\left(T_{z_1,\ldots,z_k}, L^2\left(\mathfrak{S}_{G}, P_{t_1,\ldots,t_k}^{\Ewens}\right)\right)$ defined in Section \ref{SectionTz1z2} can be understood as the inductive limit of the representations
$$
T_{z_1,\ldots,z_k}\biggl|_{ L^2\left(G_1, P_{t_1,\ldots,t_k;1}^{\Ewens}\right)},\;\;T_{z_1,\ldots,z_k}\biggl|_{ L^2\left(G_2, P_{t_1,\ldots,t_k;2}^{\Ewens}\right)},\ldots .
$$
Now we use Proposition \ref{PropositionEqvInd} to conclude that this inductive limit is equivalent to that of $\left(\Reg^1, L^2\left(G_1,\mu_1^{\Uniform}\right)\right)$,
$\left(\Reg^2, L^2\left(G_2,\mu_2^{\Uniform}\right)\right)$, $\ldots$. Indeed, for each $n$ the linear map $F_{z_1,\ldots,z_k}^n$ intertwines the representations
$T_{z_1,\ldots,z_k}\biggl|_{ L^2\left(G_n, P_{t_1,\ldots,t_k;n}^{\Ewens}\right)}$ and $\Reg^n$, see Proposition \ref{Proposition3.7.1}. Moreover, the map
$$
L^n_{z_1,\ldots,z_k}:\;\; L^2\left(G_n,\mu_n^{\Uniform}\right)\longrightarrow L^2\left(G_{n+1},\mu_{n+1}^{\Uniform}\right)
$$
is defined by (\ref{3.3.5}) in such a way that the condition
$$
F_{z_1,\ldots,z_k}^{n+1}\circ\alpha_n
=L_{z_1,\ldots,z_k}^n\circ F_{z_1,\ldots,z_k}^n
$$
is satisfied, and the $n$th block of the diagram shown on Fig. \ref{Fig.6}
is commutative. In addition,  $L^n_{z_1,\ldots,z_k}$ defines an isometric embedding of $L^2\left(G_n,\mu_n^{\Uniform}\right)$ into $L^2\left(G_{n+1},\mu_{n+1}^{\Uniform}\right)$.  Thus  Proposition \ref{PropositionEqvInd}
implies that the inductive limit $T_{z_1,\ldots,z_k}'$ of the representations
$\Reg^n$ with respect to the embedding
$$
L^2\left(G_1,\mu_1^{\Uniform}\right)\overset{L^1_{z_1,\ldots,z_k}}{\longrightarrow} L^2\left(G_2,\mu_2^{\Uniform}\right)
\overset{L^2_{z_1,\ldots,z_k}}{\longrightarrow}\ldots
$$
is well-defined, and it is equivalent to $T_{z_1,\ldots,z_k}$.
\end{proof}
%%%%%%%%%%%%%%%%%%%%%%%%%%%%%%%%%%%%%%%%%%%%%%%%%%%%%%%%%%%%%%%%%%%%%%
%%%%%%%%%%%%%%%%%%%%%%%%%%%%%%%%%%%%%%%%%%%%%%%%%%%%%%%%%%%%%%%%%%%%%%%%%
%%%%%%%%%%%%%%%%%%%%%%%%%%%%%%%%%%%%%%%%%%%%%%%%%%%%%%%%%%%%%%%%%%%%%%%
\section{Characters and spherical functions}
We wish to introduce and study the character of $T_{z_1,\ldots,z_k}$, which is a  representation of $G_{\infty}\times G_{\infty}$.
For  representations of groups like $G_{\infty}\times G_{\infty}$
 the conventional definition of characters is not applicable. However, as in the case of $S(\infty)\times S(\infty)$ this difficulty can be overcome.

 It is well known that for any finite group $K$, the pair
 $\left(K\times K,\diag (K)\right)$ is a Gelfand pair. In particular,
 this is true for
 $\left(G_n\times G_n,\diag\left(G_n\right)\right)$, where $G_n$ is the wreath product of the symmetric group $S(n)$ with a finite group $G$. The group $G_{\infty}$ is the union of an ascending chain of finite subgroups $G_n$. Proposition
8.15 and Corollary 8.16 in Borodin and Olshanski \cite{BorodinOlshanskiBook}
imply that $\left(G_{\infty}\times G_{\infty},\diag\left(G_{\infty}\right)\right)$
is a Gelfand pair (in the sense of Olshanski \cite{Olshanski}), which enables us
to use the language of spherical representations, and of spherical functions.
For a background on this material we refer the reader to Borodin and Olshanski
\cite{BorodinOlshanskiBook}, Section 8, and to references therein.  Below
we recall several definitions and facts needed in this work.

\subsection{Spherical representations of Gelfand pairs}
 Let $\mathcal{G}$ be a group, and $\mathcal{K}$ be a subgroup of $\mathcal{G}$.
Assume that $\left(\mathcal{G},\mathcal{K}\right)$ is a Gelfand pair in the sense of Olshanski \cite{Olshanski}.
\begin{defn}
A pair $\left(T,\zeta\right)$  where $T$ is a unitary representation of $\mathcal{G}$ acting in a Hilbert space $H(T)$, and $\zeta$ is a unit vector in $H(T)$ is called a \textit{spherical representation} of  $\left(\mathcal{G},\mathcal{K}\right)$
if the following conditions are satisfied:\\
(a) $\zeta$ is $\mathcal{K}$-invariant.\\
(b) $\zeta$ is cyclic, i.e. the span of vectors of the form $T(g)\zeta$, where $g\in \mathcal{G}$, is dense in $H(T)$.
In this case $\zeta$ is called the \textit{spherical vector}.
\end{defn}
Spherical representations $\left(T_1,\zeta_1\right)$ and $\left(T_2,\zeta_2\right)$
of $\left(\mathcal{G},K\right)$ are called \textit{equivalent} if
there exists an  isometric isomorphism between the  Hilbert spaces
$H\left(T_1\right)$ and $H\left(T_2\right)$ which commutes with the action of
$\mathcal{G}$  and preserves the spherical vectors, see Olshanski \cite{OlshanskiNato}, Section 2.1.
 A spherical representation $\left(T,\zeta\right)$ is called \textit{irreducible} if $(T, H(T))$ is an irreducible representation of $\mathcal{G}$.

Let  $(T,\zeta)$ be a spherical representation of a Gelfand pair $\left(\mathcal{G}, \mathcal{K}\right)$, and let $\left<.,.\right>$ denote the inner product in $H(T)$.
The function $\varphi(g)=\left<T(g)\zeta,\zeta\right>$, where $g\in \mathcal{G}$, is called the \textit{spherical function} of $\left(T,\zeta\right)$.
The  spherical function of $\left(T,\zeta\right)$ is called irreducible,
if  $\left(T,\zeta\right)$ is irreducible.
It is not hard to check that two spherical representations are equivalent if and only if their spherical functions are coincide.
If $T$ is irreducible, then $\zeta$ is unique (within a scalar factor $\alpha\in\C$, $|\alpha|=1$).
%%%%%%%%%%%%%%%%%%%%%%%%%%%%%%%%%%%
%%%%%%%%%%%%%%%%%%%%%%%%%%%%%%%%%%%%%%%%%%%%%%%%%%
\subsection{The problem of harmonic analysis}\label{SectionPHAGENERAL}
The problem of harmonic analysis on a general Gelfand pair $\left(\mathcal{G},\mathcal{K}\right)$ can be formulated as follows. A function $\varphi$ on $\mathcal{G}$ is called positive definite if for any $m=1,2$, $\ldots$, and $g_1,\ldots, g_m\in \mathcal{G}$ the matrix
$\left[\varphi\left(g_j^{-1}g_i\right)\right]_{i,j=1}^m$ is Hermitian and positive definite.
Denote by $\Phi_1\left(\mathcal{G}//\mathcal{K}\right)$ the set of all normalized positive definite functions on $\mathcal{G}$ which are $\mathcal{K}$-biinvariant. It is known that $\Phi_1\left(\mathcal{G}//\mathcal{K}\right)$ coincides with the set of all spherical functions of
$\left(\mathcal{G},\mathcal{K}\right)$. Moreover, the space $\Phi_1\left(\mathcal{G}//\mathcal{K}\right)$ is convex, and the irreducible spherical functions are precisely extreme points of the convex set $\Phi_1\left(\mathcal{G}//\mathcal{K}\right)$.

Let  $\varphi\in\Phi_1\left(\mathcal{G}//\mathcal{K}\right)$. The problem
of harmonic analysis on $\left(\mathcal{G},\mathcal{K}\right)$ is to represent  $\varphi$ in terms of the extreme points of the convex set $\Phi_1\left(\mathcal{G}//\mathcal{K}\right)$.
The case where $\varphi$ is a spherical function of some natural  spherical representation
of $\left(\mathcal{G},\mathcal{K}\right)$ is of special interest.

In the particular case of the Gelfand pair $\left(\mathcal{G}\times \mathcal{G},\diag\left(\mathcal{G}\right)\right)$ the problem of harmonic analysis
is reduced to that of representation of a character of $\mathcal{G}$ in terms of extreme characters. To see this recall the definition of a character for an arbitrary group $\mathcal{G}$.
\begin{defn}\label{characters7}
A function $\chi:\; \mathcal{G}\rightarrow\C$ is called a character of $\mathcal{G}$ if it is positive definite, central\footnote{The centrality of $\chi$ means $\chi(g_1^{-1}gg_1)=\chi(g)$ for any $g, g_1 \in \mathcal{G}$.},  and normalized at the unity element.
\end{defn}

Denote by $\mathcal{X}\left(\mathcal{G}\right)$ the set of all characters of $\mathcal{G}$. Note that
$\mathcal{X}\left(\mathcal{G}\right)$ is a convex set.

Now, assume that $\left(\mathcal{G}\times\mathcal{G},\diag(\mathcal{G})\right)$ is a Gelfand pair.
It can be shown (see, for example, Proposition 8.19 in Borodin and Olshanski \cite{BorodinOlshanskiBook}) that there is an isomorphism
$\chi\longleftrightarrow\varphi$ between $\mathcal{X}\left(\mathcal{G}\right)$ and
$\Phi_1\left(\left(\mathcal{G}\times \mathcal{G}\right)//\diag(\mathcal{G})\right)$. Let
$\left(T,\zeta\right)$ be a spherical representation of
$\left(\mathcal{G}\times \mathcal{G},\diag\left(\mathcal{G}\right)\right)$,
and let $\varphi(g_1,g_2)=\left<T\left(g_1,g_2\right)\zeta,\zeta\right>$
be the  spherical function of  $\left(T,\zeta\right)$. Then the correspondence $\chi\longleftrightarrow\varphi$ leads to the formula
\begin{equation}\label{MagenDavid}
\chi(g)=\left<T\left(g,e\right)\zeta,\zeta\right>,\; g\in \mathcal{G},
\end{equation}
where $e$ is the unit element of $\mathcal{G}$.

Under the isomorphism  between $\Phi_1\left(\left(\mathcal{G}\times \mathcal{G}\right)//\diag(\mathcal{G})\right)$ and  $\mathcal{X}\left(\mathcal{G}\right)$ the extreme points of the convex set $\Phi_1\left(\left(\mathcal{G}\times \mathcal{G}\right)//\diag(\mathcal{G})\right)$ correspond to
the extreme points of the convex
set $\mathcal{X}\left(\mathcal{G}\right)$ (called the extreme characters).
Therefore, in the particular case of $\left(\mathcal{G}\times\mathcal{G} ,\diag(\mathcal{G})\right)$,
\textit{the problem of harmonic analysis  is  to express a character $\chi$ of $\mathcal{G}$  in terms of the extreme characters of $\mathcal{G}$}.

We note that the extreme characters of $\mathcal{G}$ are associated with the irreducible spherical functions of the Gelfand pair $\left(\mathcal{G}\times\mathcal{G},\diag(\mathcal{G})\right)$. It follows that
the irreducible spherical representations of the Gelfand pair $\left(\mathcal{G}\times\mathcal{G},\diag(\mathcal{G})\right)$ (up to equivalence) are parameterized by the extreme characters of $\mathcal{G}$.
%%%%%%%%%%%%%%%%%%%%%%%%%%%%%%%%%%%%%%%%%%%%%%%%%%%%%%%%%%%%%%%%%%%%%%%%%%%%%%%%%%%%%
\subsection{The character $\chi_{z_1,\ldots,z_k}$ of $T_{z_1,\ldots,z_k}$}
Recall that the representation $T_{z_1,\ldots,z_k}$ acts in the Hilbert space $L^2\left(\mathfrak{S}_{G},P_{t_1,\ldots,t_k}^{\Ewens}\right)$.
Denote by $\zeta_0$ the function $f_0\equiv 1$ from this space. We check that $\zeta_0$ is $\diag\left(G_{\infty}\right)$-invariant, and has norm $1$.
Thus $\left(T_{z_1,\ldots,z_k},\zeta_0\right)$ is a spherical representation of the Gelfand pair
$\left(G_{\infty}\times G_{\infty},\diag\left(G_{\infty}\right)\right)$.
Let $\Phi_{z_1,\ldots,z_k}$ be the spherical function of $T_{z_1,\ldots,z_k}$
corresponding to this vector,
\begin{equation}
\Phi_{z_1,\ldots,z_k}(g)=\left\langle T_{z_1,\ldots,z_k}(g)\zeta_0,\zeta_0\right\rangle_{L^2\left(\mathfrak{S}_{G},P_{t_1,\ldots,t_k}^{\Ewens}\right)},\;\; g\in G_{\infty}\times G_{\infty}.
\end{equation}
The character $\chi_{z_1,\ldots,z_k}$ of $T_{z_1,\ldots,z_k}$ is defined by
\begin{equation}\label{CharacterT}
\chi_{z_1,\ldots,z_k}(x)=\Phi_{z_1,\ldots,z_k}(x,e)=\left\langle T_{z_1,\ldots,z_k}(x,e)\zeta_0,\zeta_0\right\rangle_{L^2\left(\mathfrak{S}_{G},P_{t_1,\ldots,t_k}^{\Ewens}\right)},\;\; x\in G_{\infty},
\end{equation}
in accordance with the general formula (\ref{MagenDavid}).
As it is explained in Section \ref{SectionPHAGENERAL} the function
$$
\chi_{z_1,\ldots,z_k}:\; G_{\infty}\longrightarrow\C
$$
defined by equation (\ref{CharacterT}) is a character of $G_{\infty}$
(in the sense of Definition \ref{characters7}). The problem of harmonic analysis on the Gelfand pair $\left(G_{\infty}\times G_{\infty},\diag\left(G_{\infty}\right)\right)$
considered below is to represent $\chi_{z_1,\ldots,z_k}$ in terms of the extreme characters of
$G_{\infty}$.
%%%%%%%%%%%%%%%%%%%%%%%%%%%%%%%%%%%%%%%%%%%%%%%%%%%%%%%%%%%%%%%%%%%%%%%%%
%%%%%%%%%%%%%%%%%%%%%%%%%%%%%%%%%%%%%%%%%%%%%%%%%%%%%%%%%%%%%%%%%%%%%%%%%%%%%%%%%%%%%%%%%%%%%%%%%%%%%%%%%%%%%%%%%%%%%%%%%%%%%%%%%%%%%%%%%%%%%%%%%%%%
\section{A formula for $\chi_{z_1,\ldots,z_k}$}
%%%%%%%%%%%%%%%%%%%%%%%%%%%%%%%%%%%%%%%%%%%%%%%%%%%%%%%%%%%%%%%%%%%%%%%%%
%%%%%%%%%%%%%%%%%%%%%%%%%%%%%%%%%%%%%%%%%%%%%%%%%%%%%%%%%%%%%%%%%%%%%%%%%%%%%%%%%%%%%%%%%%%%%%%%%%%%%%%%%%%%%%%%%%%%%%%%%%%%%%%%%%%%%%%%%%%%%%%%%%%%
In this Section we derive  a formula for $\chi_{z_1,\ldots,z_k}$. For this purpose we introduce the following notation.
Let $\lambda$ be a Young diagram, and assume that the box $\Box$ of $\lambda$ is situated on the intersection of the $i$th row and the $j$th column of $\lambda$. Then $c(\Box)=j-i$ is the content of the box $\Box$, and
$h(\Box)=\lambda_i-i+\lambda_j'-j+1$ is the hook-length of $\Box$ in $\lambda$.

Assume that $z\in\C\setminus\{0\}$. Set
\begin{equation}\label{StandardZMeasure}
M_z^{(n)}\left(\lambda\right)=\frac{n!}{\left(z\bar{z}\right)_n}\prod\limits_{\Box\in\lambda}
\frac{(z+c(\Box))(\bar{z}+c(\Box))}{h^2(\Box)},\;\; |\lambda|=n,
\end{equation}
where $|\lambda|$ denotes the number of boxes in $\lambda$.
This object, $M_z^{(n)}$, is called the $z$-measure with the parameter $z$, and it is a probability measure on the set $\Y_n$ of all Young diagrams with $n$ boxes, see Kerov, Olshanski, and Vershik \cite{KerovOlshanskiVershik}, Theorem 4.1.1.

Recall that the irreducible representations of $G_n=G\sim S(n)$
are parameterized by multiple partitions $\Lambda_n^{(k)}$,
$\Lambda_n^{(k)}\in\Y_n^{(k)}$, where
$\Y_n^{(k)}$ is the set of multiple partitions of $n$ into $k$ components introduced in Section \ref{SubSectionMPS}.
Denote by $\chi^{\Lambda_n^{(k)}}$ the character of the irreducible representation parameterized by $\Lambda_n^{(k)}$.
Also, $\DIM\left(\Lambda_n^{(k)}\right)$ denotes the dimension of the irreducible representation parameterized by $\Lambda_n^{(k)}$.
\begin{thm}\label{TheoremMultpleZmeasures} For $n=1,2,\ldots$
\begin{equation}\label{SPHERICALFUNCTIONZMEASURES}
\chi_{z_1,\ldots,z_k}\biggl|_{G\sim S(n)}
=\underset{\Lambda_n^{(k)}\in\Y_n^{(k)}}{\sum}
M_{z_1,\ldots,z_k}^{(n)}\left(\Lambda_n^{(k)}\right)
\frac{\chi^{\Lambda_n^{(k)}}}{\DIM\left(\Lambda_n^{(k)}\right)},
\end{equation}
Here $M_{z_1,\ldots,z_k}^{(n)}$ are probability measures on the set
$\Y_n^{(k)}$.
These measures can be expressed in terms of the $z$-measures $M_{a_1}^{(|\lambda^{(1)}|)}$, $\ldots$,
$M_{a_k}^{(|\lambda^{(k)}|)}$ defined by equation (\ref{StandardZMeasure}) as
\begin{equation}\label{MultipleZmeasures}
\begin{split}
M_{z_1,\ldots,z_k}^{(n)}\left(\lambda^{(1)},\ldots,\lambda^{(k)}\right)
&=
\frac{n!}{\left(a_1\bar{a}_1+\ldots+a_k\bar{a}_k\right)_n}
\frac{(a_1\bar{a}_1)_{|\lambda^{(1)}|}}{|\lambda^{(1)}|!}
\ldots
\frac{(a_k\bar{a}_k)_{|\lambda^{(k)}|}}{|\lambda^{(k)}|!}\\
&\times M_{a_1}^{(|\lambda^{(1)}|)}\left(\lambda^{(1)}\right)\ldots
M_{a_k}^{(|\lambda^{(k)}|)}\left(\lambda^{(k)}\right).
\end{split}
\end{equation}
The parameters $a_1$, $\ldots$, $a_k$ can be written in terms of $z_1$, $\ldots$, $z_k$ as follows. Let $G_{\ast}=\left\{c_1,\ldots,c_k\right\}$ be the set of conjugacy classes in $G$,
and $G^{\ast}=\left\{\gamma^1,\ldots,\gamma^k\right\}$ be the set of the irreducible characters of $G$.
 Then
\begin{equation}\label{Parametersa}
a_l=\sum\limits_{i=1}^k\frac{z_i}{\zeta_{c_i}}\;\overline{\gamma^l(c_i)},\;\; l=1,\ldots,k,
\end{equation}
where $\zeta_{c_i}=\frac{|G|}{|c_i|}$.
\end{thm}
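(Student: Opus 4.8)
The plan is to turn the abstract character into a concrete finite sum over $G_n$, diagonalize that sum through the irreducible characters of $G_n$, and then evaluate the resulting Fourier coefficient using the characteristic map for wreath products together with the classical content--hook formula for the symmetric group. First I would unwind the definition (\ref{CharacterT}). Since $\zeta_0\equiv 1$, formula (\ref{6.1.1}) gives $\chi_{z_1,\ldots,z_k}(x)=\int_{\mathfrak S_G}z_1^{C_1(y,(x,e))}\cdots z_k^{C_k(y,(x,e))}\,P_{t_1,\ldots,t_k}^{\Ewens}(dy)$. For $x\in G_n$ the cocycles satisfy $C_l(y,(x,e))=[p_n(y)x]_{c_l}-[p_n(y)]_{c_l}$ and depend only on $p_n(y)$ by Theorem \ref{THEOREMCocycles}; pushing the measure forward to $G_n$ (where it equals $P^{\Ewens}_{t_1,\ldots,t_k;n}$) and inserting (\ref{EwensMeasureWreathProduct}) with $t_l=z_l\bar z_l$ yields
\[
\chi_{z_1,\ldots,z_k}(x)=\frac{1}{|G|^{n}\left(T_1+\cdots+T_k\right)_{n}}\sum_{v\in G_n}\psi(vx)\,\overline{\psi(v)},\qquad \psi(v):=\prod_{l=1}^{k} z_l^{[v]_{c_l}},
\]
where $T_l=t_l/\zeta_{c_l}$.

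Next, $\psi$ is a central function on $G_n$, so I would expand $\psi=\sum_{\Lambda}\widehat\psi(\Lambda)\chi^{\Lambda}$ over the irreducible characters and apply the matrix-coefficient orthogonality $\frac{1}{|G_n|}\sum_{v}\chi^{\rho}(vx)\overline{\chi^{\sigma}(v)}=\delta_{\rho\sigma}\,\chi^{\rho}(x)/\DIM(\rho)$. Since $|G_n|=|G|^{n}n!$, this produces exactly the shape (\ref{SPHERICALFUNCTIONZMEASURES}), with the identification $M^{(n)}_{z_1,\ldots,z_k}(\Lambda)=\frac{n!}{(T_1+\cdots+T_k)_{n}}\,|\widehat\psi(\Lambda)|^{2}$. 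Nonnegativity is then manifest, and $\sum_\Lambda M^{(n)}_{z_1,\ldots,z_k}(\Lambda)=1$ follows by evaluating (\ref{SPHERICALFUNCTIONZMEASURES}) at $x=e$, using $\chi^{\Lambda}(e)=\DIM(\Lambda)$ and $\chi_{z_1,\ldots,z_k}(e)=\|\zeta_0\|^{2}=1$; hence each $M^{(n)}_{z_1,\ldots,z_k}$ is a probability measure on $\Y_n^{(k)}$.

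The core is the evaluation of $\widehat\psi(\Lambda)=\langle\psi,\chi^{\Lambda}\rangle$, and here I would invoke the characteristic map for $G\sim S(n)$ (Macdonald, Appendix B). The central function $\psi$ is the image of the specialization sending every conjugacy-class power sum $P_r(c_i)$, $r\ge 1$, to $z_i$, while $\chi^{\Lambda}$ corresponds to the product $\prod_{l}s_{\lambda^{(l)}}$ of Schur functions in the variable sets indexed by the irreducible characters $\gamma^1,\ldots,\gamma^k$ of $G$. Rewriting in the character-indexed power sums $p_r(\gamma^l)=\sum_i\zeta_{c_i}^{-1}\overline{\gamma^l(c_i)}\,P_r(c_i)$ turns the specialization into $p_r(\gamma^l)\mapsto a_l$ for all $r$, with $a_l$ exactly as in (\ref{Parametersa}), and makes the scalar product factorize as a product over $l$ of ordinary symmetric-function pairings $\langle(p_r\equiv a_l)_{r},\,s_{\lambda^{(l)}}\rangle$. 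Each factor is then the classical symmetric-group computation via Jucys--Murphy elements: $\sum_{s\in S(m)}a^{[s]}s=\prod_{i=1}^{m}(a+X_i)$ acts on the Specht module $V^{\mu}$ as the scalar $\prod_{\Box\in\mu}(a+c(\Box))$, so the hook-length formula gives $\langle(p_r\equiv a)_{r},s_{\mu}\rangle=\prod_{\Box\in\mu}(a+c(\Box))/h(\Box)$. Therefore $\widehat\psi(\Lambda)=\prod_{l=1}^{k}\prod_{\Box\in\lambda^{(l)}}\frac{a_l+c(\Box)}{h(\Box)}$ (only $|\widehat\psi(\Lambda)|^{2}$ is needed, so any conjugation in the change of basis is harmless, as it merely exchanges $a_l$ with $\bar a_l$). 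I expect this paragraph to be the main obstacle: pinning down the characteristic map and the conjugacy-class--to--irreducible-character change of power-sum bases precisely, with all $\zeta_{c_i}$ normalizations correct, so that the decoupled parameter is exactly (\ref{Parametersa}) and the pairing factorizes cleanly over $\gamma^1,\ldots,\gamma^k$.

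Finally I would square and reorganize. By (\ref{StandardZMeasure}) one has $\frac{(a_l\bar a_l)_{|\lambda^{(l)}|}}{|\lambda^{(l)}|!}\,M^{(|\lambda^{(l)}|)}_{a_l}(\lambda^{(l)})=\prod_{\Box\in\lambda^{(l)}}\frac{(a_l+c(\Box))(\bar a_l+c(\Box))}{h^{2}(\Box)}=\prod_{\Box\in\lambda^{(l)}}\frac{|a_l+c(\Box)|^{2}}{h^{2}(\Box)}$, so $|\widehat\psi(\Lambda)|^{2}$ is precisely the product over $l$ of these factors. Combining with the Plancherel identity $\sum_{l}a_l\bar a_l=\sum_{l}T_l$ — which follows from column orthogonality of the character table, $\sum_{l}\overline{\gamma^l(c_i)}\gamma^l(c_{i'})=\zeta_{c_i}\delta_{ii'}$, so that $\sum_l|a_l|^2=\sum_i|z_i|^2/\zeta_{c_i}=\sum_i T_i$ — the normalizing Pochhammer symbols match, and the expression $\frac{n!}{(T_1+\cdots+T_k)_{n}}|\widehat\psi(\Lambda)|^{2}$ collapses to the right-hand side of (\ref{MultipleZmeasures}), completing the proof.
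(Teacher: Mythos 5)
Your proposal is correct and follows essentially the same route as the paper's proof: reduce $\chi_{z_1,\ldots,z_k}\bigl|_{G_n}$ to the finite sum $\frac{1}{|G|^n\left(T_1+\cdots+T_k\right)_n}\sum_{v\in G_n}\psi(vx)\overline{\psi(v)}$ with $\psi=z_1^{[\cdot]_{c_1}}\cdots z_k^{[\cdot]_{c_k}}$, expand the central function $\psi$ in irreducible characters through the characteristic map and the change of power-sum bases $p_r(c_i)\leftrightarrow p_r(\gamma^l)$ (your Fourier coefficient $\widehat\psi(\Lambda)=\prod_l\prod_{\Box\in\lambda^{(l)}}\frac{a_l+c(\Box)}{h(\Box)}$ is exactly Lemma \ref{LEMMAZCYCLES}), and collapse the double sum by character orthogonality together with the identity $\sum_l a_l\bar a_l=\sum_l t_l/\zeta_{c_l}$. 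The only cosmetic differences are that you reach the finite sum directly from the cocycles and the Ewens pushforward rather than through the intertwiner $F^n_{z_1,\ldots,z_k}$ of Proposition \ref{Proposition3.7.1} (which is multiplication by the normalized $\psi$, so the two computations coincide), and you establish the content--hook specialization by Jucys--Murphy elements where the paper cites the generating-function expansion from Borodin--Olshanski.
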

The measures $M^{(n)}_{z_1,\ldots,z_k}$ defined by equation (\ref{MultipleZmeasures}) are called the \textit{multiple $z$-measures}.
The fact that $M^{(n)}_{z_1,\ldots,z_k}$ is a probability measure on $\Y_n^{(k)}$ follows immediately from equation (\ref{SPHERICALFUNCTIONZMEASURES}). Alternatively, this fact can be checked directly using formula (\ref{MultipleZmeasures}).

In order to prove Theorem \ref{TheoremMultpleZmeasures} we need different facts from the theory of symmetric functions related to
representation theory of the finite wreath products $G_n=G\sim S(n)$. We collect these
results in the next Section.
\subsection{Symmetric functions and  characters of $G\sim S(n)$}
The basic reference for this Section is Macdonald \cite{Macdonald}, Appendix B.
\subsubsection{The algebra $\Sym\left(G\right)$ of  symmetric functions.}\label{SectionAlgebraSymmetricFunctions}
For each conjugacy class $c_1$, $\ldots$, $c_k$ of $G$ we assign a sequence of variables, namely we assign
the sequence $\left(x_{ic_1}\right)_{i\geq 1}$ for the conjugacy class $c_1$, $\ldots$, the sequence $\left(x_{ic_k}\right)_{i\geq 1}$ for the conjugace class $c_k$.
Denote by $p_r(c_1)$, $\ldots$, $p_r(c_k)$ the $r$th power symmetric functions in variables
$\left(x_{ic_1}\right)_{i\geq 1}$, $\ldots$, $\left(x_{ic_k}\right)_{i\geq 1}$, respectively. The algebra $\Sym\left(G\right)$ is defined as that generated by
$\left(p_r(c_1)\right)_{r\geq 1}$, $\ldots$, $\left(p_r(c_k)\right)_{r\geq 1}$, i.e.
$$
\Sym\left(G\right)=\C\left[p_{r_1}(c_1),\ldots,p_{r_k}(c_k):\;\; r_1\geq 1,\ldots, r_k\geq 1\right].
$$
For each family $\Lambda=\left(\lambda^{(1)},\ldots,\lambda^{(k)}\right)$ of $k$ Young diagrams
$\lambda^{(1)},\ldots,\lambda^{(k)}$ we define
$$
p_{\Lambda}=p_{\lambda^{(1)}}\left(c_1\right)\ldots p_{\lambda^{(k)}}\left(c_k\right),
$$
where $p_{\lambda^{(j)}}\left(c_j\right)$ denotes the power symmetric function in variables
$\left(x_{ic_j}\right)_{i\geq 1}$ parameterized by the Young diagram $\lambda^{(j)}$.
It is known that the $p_{\Lambda}$ form a $\C$-basis of $\Sym(G)$.

In addition,  for each $r\geq 1$ define
\begin{equation}\label{1.1}
p_r\left(\gamma^l\right)=\sum\limits_{i=1}^k\zeta^{-1}_{c_i}\gamma^l\left(c_i\right)p_r\left(c_i\right),\;\; l\in\left\{1,\ldots,k\right\}.
\end{equation}
where  $\gamma^l\left(c_i\right)$ denotes the value of the irreducible character $\gamma^l$ on the conjugacy class $c_i$. The functions $p_r\left(\gamma^l\right)$ are algebraically independent and generate $\Sym\left(G\right)$ as $\C$-algebra
$$
\Sym\left(G\right)=\C\left[p_{r_1}(\gamma^1),\ldots,p_{r_k}(\gamma^k):\;\; r_1\geq 1,\ldots, r_k\geq 1\right].
$$
The orthogonality of the irreducible characters $\gamma^l$,  and equation (\ref{1.1}) imply
\begin{equation}\label{1.3}
p_r\left(c_i\right)=\sum\limits_{l=1}^k\overline{\gamma^l\left(c_i\right)}p_r\left(\gamma^l\right),\;\; r\geq 1.
\end{equation}
Equation (\ref{1.3}) is called \textit{the change of variables formula}.
The Schur functions $s_{\mu}\left(\gamma^l\right)$ can be introduced by the formula
\begin{equation}
s_{\mu}\left(\gamma^l\right)=\sum\limits_{\varrho\vdash n}z_{\varrho}^{-1}\chi_{\varrho}^{\mu}p_{\varrho}\left(\gamma^l\right),
\end{equation}
where $\chi_{\varrho}^{\mu}$ is the value of the irreducible character
of the symmetric group $S(n)$ parameterized by the Young diagram $\mu$ at elements in the conjugacy class $C_{\varrho}$ of $S(n)$, $z_{\varrho}=\frac{n!}{\left|C_{\varrho}\right|}$, and $p_{\varrho}\left(\gamma^l\right)=p_{\varrho_1}\left(\gamma^l\right)p_{\varrho_2}\left(\gamma^l\right)\ldots$.

\subsubsection{The characteristic map}\label{SectionAlgebraSymmetricFunctions1}
Denote by $R\left(G\sim S(n)\right)$ the complex vector space spanned by $G^{\ast}$. In this space introduce a hermitian scalar product
$$
\left<u,v\right>_{R\left(G\sim S(n)\right)}=\frac{1}{n!}
\sum\limits_{x\in G\sim S(n)}u(x)\overline{v(x)}.
$$
If $f\in R\left(G\sim S(n)\right)$, then $\ch(f)$ is defined by
\begin{equation}
\ch(f)=\left\langle f,\psi\right\rangle_{G\sim S(n)}=\frac{1}{n!|G|^n}\sum\limits_{x\in G\sim S(n)}f(x)\overline{\psi(x)},
\end{equation}
where
$$
\psi(x)=p_{\Lambda_n^{(k)}}\left(c_1,\ldots,c_k\right)=p_{\lambda^{(1)}}(c_1)\ldots p_{\lambda^{(k)}}(c_k),
$$
provided that $x\in G\sim S(n)$ belongs to the conjugacy class parameterized by $\Lambda_n^{(k)}=\left(\lambda^{(1)},\ldots,\lambda^{(k)}\right)$. If $f_{\left(\lambda^{(1)},\ldots,\lambda^{(k)}\right)}$
is the value of $f$ at elements of the conjugacy class parameterized by $\Lambda_n^{(k)}=\left(\lambda^{(1)},\ldots,\lambda^{(k)}\right)$, then
\begin{equation}
\begin{split}
&\ch(f)=\sum\limits_{\Lambda_n^{(k)}\in\Y_n^{(k)}}\;\;\frac{1}{\prod\limits_{l=1}^k\left(\prod\limits_{j=1}^n
j^{r_j(\lambda^{(l)})}r_j(\lambda^{(l)})!\right)}
\frac{1}{\prod\limits_{l=1}^k\zeta_{c_l}^{r_1(\lambda^{(l)})+\ldots+r_n(\lambda^{(l)})}}\\
&\times f_{\left(\lambda^{(1)},\ldots,\lambda^{(k)}\right)}p_{\lambda^{(1)}}(c_1)\ldots p_{\lambda^{(k)}}(c_k),
\end{split}
\end{equation}
where $\zeta_{c_l}$ and $r_j\left(\lambda^{(l)}\right)$ are defined in Section
\ref{SectionFiniteWreathProduct}.

The map $\ch:\; R\left(G\sim S(n)\right)\longrightarrow \Sym\left(G\right)$ is called the characteristic map.
Set
$$
R\left(G\right)=\bigoplus\limits_{n\geq 0}R\left(G\sim S(n)\right),
$$
and define in $R\left(G\right)$ a scalar product by
$$
\left<f,g\right>_{R\left(G\right)}=\sum\limits_{n\geq 0}\left<f_n,g_n\right>_{R\left(G\sim S(n)\right)},
$$
where $f=\sum\limits_nf_n$, $g=\sum\limits_n g_n$ with
$f_n\in R\left(G\sim S(n)\right)$ and $g\in R\left(G\sim S(n)\right)$.
A multiplication in $R\left(G\sim S(n)\right)$ can be introduced. With this multiplication, $R\left(G\right)$ turns into
a graded algebra.
The map $\ch$ gives rise to an isometric isomorphism of the graded algebras
$R\left(G\right)$ and $\Sym\left(G\right)$. In particular, we have
\begin{equation}
\ch\left(\chi^{\Lambda_n^{(k)}}\right)=s_{\lambda^{(1)}}\left(\gamma^{1}\right)
\ldots s_{\lambda^{(k)}}\left(\gamma^{k}\right),
\end{equation}
see Macdonald \cite{Macdonald}, Appendix B, equation (9.4).
%%%%%%%%%%%%%%%%%%%%%%%%%%%%%%%%%%%%%%%%%%%%%%%%%%%%%%%%%%%%%%%%%%%%
\subsubsection{The Frobenius-type character formula for $G\sim S(n)$}
Denote by $\chi^{\Lambda_n^{(k)}}_{\tLambda_n^{(k)}}$ the irreducible character of $G\sim S(n)$ parameterized by the multiple partition
$\Lambda_n^{(k)}=\left(\lambda^{(1)},\ldots,\lambda^{(k)}\right)$ of $n$, and evaluated at the conjugacy class of $G\sim S(n)$ parameterized by the multiple partition $\tLambda_n^{(k)}=\left(\mu^{(1)},\ldots,\mu^{(k)}\right)$ of $n$.
The Frobenius-type character formula for $\chi^{\Lambda_n^{(k)}}_{\tLambda_n^{(k)}}$
is
\begin{equation}\label{TheFrobeniusCharacterFormula}
p_{\mu^{(1)}}\left(c_1\right)\ldots p_{\mu^{(k)}}\left(c_k\right)
=\sum\limits_{\Lambda_n^{(k)}\in\Y_n^{(k)}}
\overline{\chi^{\Lambda_n^{(k)}}_{\tLambda_n^{(k)}}}\;
s_{\lambda^{(1)}}\left(\gamma^1\right)
\ldots s_{\lambda^{(k)}}\left(\gamma^k\right),
\end{equation}
see Macdonald \cite{Macdonald}, Appendix B, $\S$ 9.
Here $p_{\mu^{(l)}}\left(c_l\right)$ are the power symmetric functions and $s_{\lambda^{(l)}}\left(\gamma^l\right)$ are the Schur functions introduced in Section \ref{SectionAlgebraSymmetricFunctions}. The Frobenius-type character
formula (equation (\ref{TheFrobeniusCharacterFormula})) enables to derive an explicit expression for the dimensions of irreducible representations of $G\sim S(n)$.
\begin{prop}Denote by $\DIM\left(\Lambda_n^{(k)}\right)$ the dimension of the irreducible representation of $G\sim S(n)$ parameterized by the multiple partition
$\Lambda_n^{(k)}=\left(\lambda^{(1)},\ldots,\lambda^{(k)}\right)$ of $n$. We have
\begin{equation}\label{2.1.2.1}
\DIM\left(\Lambda_n^{(k)}\right)=\frac{n!}{|\lambda^{(1)}|!\ldots|\lambda^{(k)}|!}\prod\limits_{l=1}^k\left(d_l\right)^{|\lambda^{(l)}|}\dim\lambda^{(l)},
\end{equation}
where $d_l$ is the value of the irreducible character $\gamma^l$ of $G$ at the unit element, and $\dim\lambda^{(l)}$ is the number of standard Young diagrams of shape $\lambda^{(l)}$.
\end{prop}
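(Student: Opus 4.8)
The plan is to compute $\DIM\left(\Lambda_n^{(k)}\right)$ as the value of the irreducible character $\chi^{\Lambda_n^{(k)}}$ at the unit element of $G\sim S(n)$, and to extract this value directly from the Frobenius-type character formula (\ref{TheFrobeniusCharacterFormula}). First I would identify the conjugacy class of the identity $\left((e_G,\ldots,e_G),e_{S(n)}\right)$: each of its cycles is a $1$-cycle whose cycle-product is $e_G$, so if $c_{i_0}$ denotes the conjugacy class $\{e_G\}$ of $G$, then the identity lies in the class parameterized by the multiple partition $\tLambda_n^{(k)}$ with $\mu^{(i_0)}=(1^n)$ and $\mu^{(i)}=\emptyset$ for $i\neq i_0$. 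Consequently $\DIM\left(\Lambda_n^{(k)}\right)=\chi^{\Lambda_n^{(k)}}_{\tLambda_n^{(k)}}$ for this particular $\tLambda_n^{(k)}$.

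With this choice the left-hand side of (\ref{TheFrobeniusCharacterFormula}) becomes $p_{(1^n)}(c_{i_0})=\left(p_1(c_{i_0})\right)^n$. Next I would rewrite $p_1(c_{i_0})$ in terms of the generators $p_1(\gamma^l)$ by means of the change of variables formula (\ref{1.3}): since $\gamma^l(c_{i_0})=\gamma^l(e_G)=d_l$ is real, one obtains $p_1(c_{i_0})=\sum_{l=1}^k d_l\,p_1(\gamma^l)$. Expanding the $n$th power by the multinomial theorem then yields
$$
\left(p_1(c_{i_0})\right)^n=\sum_{n_1+\ldots+n_k=n}\frac{n!}{n_1!\ldots n_k!}\prod_{l=1}^k d_l^{n_l}\left(p_1(\gamma^l)\right)^{n_l}.
$$

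I would then invoke the classical single-alphabet identity $\left(p_1\right)^m=\sum_{\lambda\vdash m}(\dim\lambda)\,s_\lambda$ (the Schur expansion of $p_1^m$, equivalently the Frobenius formula $\chi^\lambda_{(1^m)}=\dim\lambda$ for $S(m)$), applied in each alphabet $\gamma^l$, so that $\left(p_1(\gamma^l)\right)^{n_l}=\sum_{\lambda^{(l)}\vdash n_l}(\dim\lambda^{(l)})\,s_{\lambda^{(l)}}(\gamma^l)$. Substituting and reindexing by the multiple partition $\Lambda_n^{(k)}=(\lambda^{(1)},\ldots,\lambda^{(k)})$ with $|\lambda^{(l)}|=n_l$ gives
$$
\left(p_1(c_{i_0})\right)^n=\sum_{\Lambda_n^{(k)}\in\Y_n^{(k)}}\frac{n!}{|\lambda^{(1)}|!\ldots|\lambda^{(k)}|!}\prod_{l=1}^k d_l^{|\lambda^{(l)}|}\dim\lambda^{(l)}\cdot s_{\lambda^{(1)}}(\gamma^1)\ldots s_{\lambda^{(k)}}(\gamma^k).
$$

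Finally I would compare this expression with the right-hand side of (\ref{TheFrobeniusCharacterFormula}). Because the products $s_{\lambda^{(1)}}(\gamma^1)\ldots s_{\lambda^{(k)}}(\gamma^k)$ form a $\C$-basis of $\Sym(G)$ (they are the images under the isometric isomorphism $\ch$ of the irreducible characters $\chi^{\Lambda_n^{(k)}}$), I can equate coefficients to obtain $\overline{\chi^{\Lambda_n^{(k)}}_{\tLambda_n^{(k)}}}$ equal to the displayed product; since the dimension is real this gives (\ref{2.1.2.1}). The only points requiring genuine care — and the closest thing to an obstacle — are the correct identification of the identity's conjugacy class together with the observation $\overline{\gamma^l(e_G)}=d_l$ (which renders the complex conjugation in (\ref{1.3}) harmless and removes any dependence on the index $i_0$), and a clean citation of the $p_1^m$ Schur expansion; the remaining steps are routine bookkeeping with the multinomial theorem and the linear independence of the Schur basis.
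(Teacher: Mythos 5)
Your proof is correct, but it takes a different route from the paper, which offers no derivation at all: the paper simply cites Macdonald, Appendix B, \S 9. There the formula (\ref{2.1.2.1}) comes out of the explicit construction of the irreducible representations of $G\sim S(n)$, which are induced from subgroups of the form $\left(G\sim S(n_1)\right)\times\ldots\times\left(G\sim S(n_k)\right)$ with $n_1+\ldots+n_k=n$, the $l$th factor carrying a twist of $\left(\gamma^l\right)^{\otimes n_l}$ by the $S(n_l)$-irreducible of shape $\lambda^{(l)}$; the dimension is then immediate from the dimension count for induced representations, the multinomial coefficient being the index of the subgroup in $S(n)$-direction. You instead stay entirely inside the character-theoretic toolkit the paper has already displayed: you evaluate the Frobenius-type formula (\ref{TheFrobeniusCharacterFormula}) at the class of the identity (correctly identified as $\mu^{(i_0)}=(1^n)$, $\mu^{(i)}=\emptyset$ for $i\neq i_0$, with $c_{i_0}=\{e_G\}$), pass to the $\gamma$-alphabets via the change of variables (\ref{1.3}), expand by the multinomial theorem, and use the classical identity $p_1^m=\sum_{\lambda\vdash m}(\dim\lambda)\,s_{\lambda}$ in each alphabet before equating coefficients in the basis $s_{\lambda^{(1)}}\left(\gamma^1\right)\ldots s_{\lambda^{(k)}}\left(\gamma^k\right)$. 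All the delicate points are genuinely handled: the algebraic independence of the $p_r\left(\gamma^l\right)$ (asserted in the paper) justifies both the multinomial bookkeeping and the coefficient extraction, the realness of $d_l=\gamma^l\left(e_G\right)$ disposes of the conjugations in (\ref{1.3}) and in (\ref{TheFrobeniusCharacterFormula}), and the identification of the products of Schur functions as $\ch$-images of the irreducible characters guarantees linear independence. What Macdonald's structural route buys is the representations themselves (and the formula without any appeal to the character formula); what your route buys is a short, self-contained verification using only statements quoted in this paper, which is arguably preferable in context.
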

\begin{proof} See Macdonald \cite{Macdonald}, Appendix B, $\S$ 9.
\end{proof}
%%%%%%%%%%%%%%%%%%%%%%%%%%%%%%%%%%%%%%%%%%%%%%%%%%%%%%%%%%%%%
%%%%%%%%%%%%%%%%%%%%%%%%%%%%%%%%%%%%%%%%%%%%%%%%%%%%%%%%%%%%%%%%%%
\subsection{Proof of Theorem \ref{TheoremMultpleZmeasures}}
\begin{lem}\label{LEMMAZCYCLES}Given  $z_1\in\C\setminus\{0\}$, $\ldots$, $z_k\in\C\setminus\{0\}$, the expansion of the central function
$$
x\longrightarrow z_1^{[x]_{c_1}}\ldots z_k^{[x]_{c_k}}
$$
defined on the group $G\sim S(n)$ in terms of the irreducible characters $\chi^{\Lambda_n^{(k)}}$, $\Lambda_n^{(k)}=\left(\lambda^{(1)},\ldots,\lambda^{(k)}\right)\in\Y_n^{(k)}$,
of $G\sim S(n)$
can be written as
\begin{equation}\label{EqMainLemmZ}
z_1^{[x]_{c_1}}\ldots z_k^{[x]_{c_k}}=
\sum\limits_{\Lambda_n^{(k)}\in\Y_n^{(k)}}
\left(\prod\limits_{\Box\in\lambda^{(1)}}\frac{a_1+c(\Box)}{h(\Box)}\right)
\ldots\left(\prod\limits_{\Box\in\lambda^{(k)}}\frac{a_k+c(\Box)}{h(\Box)}\right)
\chi^{\Lambda_n^{(k)}}(x),
\end{equation}
where the parameters $a_1$, $\ldots$, $a_k$ are defined by equation (\ref{Parametersa}).
\end{lem}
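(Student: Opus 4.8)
The plan is to compute the image of the central function $f(x)=z_1^{[x]_{c_1}}\cdots z_k^{[x]_{c_k}}$ under the characteristic map $\ch$ and to exploit the fact, recalled in Section \ref{SectionAlgebraSymmetricFunctions1}, that $\ch$ is an isometric isomorphism carrying $\chi^{\Lambda_n^{(k)}}$ to the product of Schur functions $s_{\lambda^{(1)}}(\gamma^1)\cdots s_{\lambda^{(k)}}(\gamma^k)$. Since $f$ depends only on the cycle-type data $[x]_{c_l}$ it is central, hence lies in $R(G\sim S(n))$, and on the conjugacy class labelled by $\Lambda_n^{(k)}=(\lambda^{(1)},\ldots,\lambda^{(k)})$ it takes the value $\prod_l z_l^{\ell(\lambda^{(l)})}$, where $\ell(\lambda^{(l)})$ denotes the number of rows of $\lambda^{(l)}$, which equals the number of cycles of type $c_l$. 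Once the Schur expansion of $\ch(f)$ is known, applying $\ch^{-1}$ term by term will yield (\ref{EqMainLemmZ}) at once.

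First I would substitute this value of $f$ into the explicit formula for $\ch$ of Section \ref{SectionAlgebraSymmetricFunctions1}. Summing over all $n$ and collecting the factors attached to each conjugacy class, the resulting generating function factorizes as
\[
\sum_{n\geq 0}\ch\left(f\big|_{G\sim S(n)}\right)=\prod_{l=1}^{k}\exp\left(\frac{z_l}{\zeta_{c_l}}\sum_{j\geq 1}\frac{p_j(c_l)}{j}\right),
\]
where the exponential comes from the elementary identity $\sum_\mu z_\mu^{-1}u^{\ell(\mu)}p_\mu=\exp\!\left(u\sum_j p_j/j\right)$ (here $z_\mu$ is the standard normalizing constant). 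Next I would apply the change of variables formula (\ref{1.3}), $p_j(c_l)=\sum_m\overline{\gamma^m(c_l)}\,p_j(\gamma^m)$, and interchange the order of summation; the coefficient of $\sum_j p_j(\gamma^m)/j$ then collapses to exactly $a_m=\sum_l(z_l/\zeta_{c_l})\overline{\gamma^m(c_l)}$ as defined in (\ref{Parametersa}), giving
\[
\sum_{n\geq 0}\ch\left(f\big|_{G\sim S(n)}\right)=\prod_{m=1}^{k}\exp\left(a_m\sum_{j\geq 1}\frac{p_j(\gamma^m)}{j}\right).
\]

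The main step is the single-alphabet identity
\[
\exp\left(a\sum_{j\geq 1}\frac{p_j}{j}\right)=\sum_{\lambda}\left(\prod_{\Box\in\lambda}\frac{a+c(\Box)}{h(\Box)}\right)s_\lambda,
\]
applied to each factor with $a=a_m$ and alphabet $\gamma^m$. To prove it I would expand the exponential as $\sum_\rho z_\rho^{-1}a^{\ell(\rho)}p_\rho$, use $p_\rho=\sum_\lambda\chi^\lambda_\rho s_\lambda$ (the inverse of the power-sum/Schur relation, valid by orthonormality of the irreducible characters of $S(n)$), and observe that the coefficient of $s_\lambda$ becomes $\sum_{\rho\vdash n}z_\rho^{-1}a^{\ell(\rho)}\chi^\lambda_\rho$. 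For a positive integer $N$ this last sum is precisely the principal specialization $s_\lambda(1^N)=\prod_{\Box}(N+c(\Box))/h(\Box)$ by the classical content–hook evaluation, and since both sides are polynomials in $N$ the identity extends to arbitrary $a\in\C$. This interpolation argument is the crux of the proof; everything else is bookkeeping.

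Finally, expanding the product of the $k$ exponentials indexed by $\gamma^1,\ldots,\gamma^k$ and extracting the homogeneous component of total degree $n$ gives
\[
\ch\left(f\big|_{G\sim S(n)}\right)=\sum_{\Lambda_n^{(k)}\in\Y_n^{(k)}}\prod_{m=1}^{k}\left(\prod_{\Box\in\lambda^{(m)}}\frac{a_m+c(\Box)}{h(\Box)}\right)s_{\lambda^{(1)}}(\gamma^1)\cdots s_{\lambda^{(k)}}(\gamma^k).
\]
Since $\ch$ is an isomorphism of graded algebras and $\ch(\chi^{\Lambda_n^{(k)}})=s_{\lambda^{(1)}}(\gamma^1)\cdots s_{\lambda^{(k)}}(\gamma^k)$, applying $\ch^{-1}$ to each term produces exactly (\ref{EqMainLemmZ}). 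The only genuinely nontrivial ingredient is the content–hook interpolation identity above; the factorization of the generating function and the reindexing through (\ref{1.3}) are routine.
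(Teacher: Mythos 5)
Your proof is correct and follows essentially the same route as the paper: compute $\ch$ of the central function, factor the generating function over the conjugacy classes, pass to the alphabets $p_r(\gamma^l)$ via the change of variables formula (\ref{1.3}) so that exactly the parameters $a_l$ of (\ref{Parametersa}) appear, expand each exponential factor into Schur functions with content/hook coefficients, and pull back through $\ch^{-1}$ using $\ch\bigl(\chi^{\Lambda_n^{(k)}}\bigr)=s_{\lambda^{(1)}}(\gamma^1)\cdots s_{\lambda^{(k)}}(\gamma^k)$. The only difference is that you prove the key identity $\exp\bigl(a\sum_{r\geq 1}p_r/r\bigr)=\sum_\lambda\bigl(\prod_{\Box\in\lambda}\frac{a+c(\Box)}{h(\Box)}\bigr)s_\lambda$ yourself, via the principal specialization $s_\lambda(1^N)$ and polynomial interpolation in $a$, where the paper cites it from Borodin and Olshanski; incidentally, your denominators $r$ are the correct ones, the paper's $u^r/r!$ being a typo for $u^r/r$.
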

\begin{proof}
Let $\ch$ be the characteristic map introduced in Section \ref{SectionAlgebraSymmetricFunctions1}.
Then it is not hard  to check that the relation
\begin{equation}\label{LemExpans}
\begin{split}
&1+\sum\limits_{r\geq 1}\ch\left[z_1^{[x]_{c_1}}\ldots z_k^{[x]_{c_k}}\right]u^r\\
&=\exp\left[\left(\frac{z_1}{\zeta_{c_1}}\right)\sum\limits_{r=1}^{\infty}\frac{u^r}{r!}p_r(c_1)\right]\ldots
\exp\left[\left(\frac{z_k}{\zeta_{c_k}}\right)\sum\limits_{r=1}^{\infty}\frac{u^r}{r!}p_r(c_k)\right]
\end{split}
\end{equation}
between the formal power series is satisfied.  The right-hand side of the equation  above can be rewritten as
\begin{equation}\label{EXINLIM}
\exp\left[\sum\limits_{r=1}^{\infty}\left(\sum\limits_{i=1}^k\frac{z_i}{\zeta_{c_i}}p_r(c_i)\right)
\frac{u^r}{r!}\right].
\end{equation}
Taking into account the change of variables formula (equation (\ref{1.3})) we see that (\ref{EXINLIM}) can be represented in the form
\begin{equation}\label{EXINLIM1}
\exp\left[\sum\limits_{r=1}^{\infty}\left(\sum\limits_{j=1}^ka_jp_r(\gamma^j)\right)
\frac{u^r}{r!}\right],
\end{equation}
where  the parameters $a_1$, $\ldots$, $a_k$ are defined by  equation (\ref{Parametersa}).
It is known that
\begin{equation}
\exp\left[a_l\sum\limits_{r=1}^{\infty}p_r(\gamma^{l})\frac{u^r}{r!}\right]
=\sum\limits_{\lambda^{(l)}\in\Y}\left(\prod\limits_{\Box\in\lambda^{(l)}}\frac{a_l+c(\Box)}{h(\Box)}\right)s_{\lambda^{(l)}}(u\gamma^{l}),
\end{equation}
where $l=1,\ldots, k$, see, for example, Borodin and Olshanski
\cite{BorodinOlshanskiBook}, Section 11. This can be used together with  expansion (\ref{LemExpans}) to conclude that
\begin{equation}
\begin{split}
&\ch\left[z_1^{[x]_{c_1}}\ldots z_k^{[x]_{c_k}}\right]\\
&=
\underset{|\lambda^{(1)}|+\ldots+|\lambda^{(k)}|=n}{\sum\limits_{\lambda^{(1)},\ldots,\lambda^{(k)}}}
\left(\prod\limits_{\Box\in\lambda^{(1)}}\frac{a_1+c(\Box)}{h(\Box)}\right)
\ldots\left(\prod\limits_{\Box\in\lambda^{(k)}}\frac{a_k+c(\Box)}{h(\Box)}\right)
s_{\lambda^{(1)}}\left(\gamma^{1}\right)\ldots s_{\lambda^{(k)}}\left(\gamma^{k}\right).
\end{split}
\end{equation}
The last equation implies (\ref{EqMainLemmZ}).
\end{proof}
Now we are ready to complete the proof of
of Theorem \ref{TheoremMultpleZmeasures}. Recall that $\chi_{z_1,\ldots,z_k}$ is defined by equation (\ref{CharacterT}). The representation $T_{z_1,\ldots,z_k}$ acting in $L^2\left(\mathfrak{S}_G,P_{t_1,\ldots,t_k}^{\Ewens}\right)$ is the inductive limit of the representations
$$
T_{z_1,\ldots,z_k}\biggl|_{ L^2\left(G_1, P_{t_1,\ldots,t_k;1}^{\Ewens}\right)},\;\;T_{z_1,\ldots,z_k}\biggl|_{ L^2\left(G_2, P_{t_1,\ldots,t_k;2}^{\Ewens}\right)},\ldots ,
$$
and $\zeta_0$ can be regarded as an element of $L^2\left(G_n, P_{t_1,\ldots,t_k;n}^{\Ewens}\right)$ for each $n=1,2,\ldots$. Since
$P^{\Ewens}_{t_1,\ldots,t_k}$ is the projective limit measure,
$P^{\Ewens}_{t_1,\ldots,t_k}=\underset{\longleftarrow}{\lim}
P_{t_1,\ldots,t_k;n}^{\Ewens}$, we can write
\begin{equation}
\chi_{z_1,\ldots,z_k}\biggl|_{G\sim S(n)}(x)=\left\langle T_{z_1,\ldots,z_k}(x,e)\zeta_0,\zeta_0\right\rangle_{L^2\left(G_n,P_{t_1,\ldots,t_k;n}^{\Ewens}\right)},\;\; x\in G\sim S(n).
\end{equation}
Since $F^n_{z_1,\ldots,z_k}$ is an isometry, and it intertwines the representations
$T_{z_1,\ldots,z_k}\biggl|_{L^2\left(G_n, P_{t_1,\ldots,t_k;n}^{\Ewens}\right)}$
and $\Reg^n$ of $G_n\times G_n$ (see Proposition \ref{Proposition3.7.1}),
we obtain
\begin{equation}
\left\langle T_{z_1,\ldots,z_k}(x,e)\zeta_0,\zeta_0
\right\rangle_{L^2\left(G_n,P_{t_1,\ldots,t_k;n}^{\Ewens}\right)}
=\left\langle\Reg^n(x,e) F_{z_1,\ldots,z_k}^n\zeta_0, F_{z_1,\ldots,z_k}^{n}\zeta_0\right\rangle_{L^2\left(G_n,\mu_n^{\Uniform}\right)}.
\end{equation}
Recall that $\zeta_0$ represents the unit vector in $L^2\left(G_n,P_{t_1,\ldots,t_k;n}^{\Ewens}\right)$, so its image $F_{z_1,\ldots,z_k}^n\zeta_0$
in $L^2\left(G_n,\mu_n^{\Uniform}\right)$ is the function $F_{z_1,\ldots,z_k}^n(y)$ defined by equation (\ref{3.7.1}).  Thus we obtain
\begin{equation}
\chi_{z_1,\ldots,z_k}(x)=\frac{1}{|G|^nn!}\sum\limits_{y\in G\sim S(n)}F_{z_1,\ldots,z_k}(yx)
\overline{F_{z_1,\ldots,z_k}(y)},\;\; x\in G\sim S(n),
\end{equation}
or
\begin{equation}
\chi_{z_1,\ldots,z_k}(x)=
\frac{1}{\left|G\right|^n
\left(\frac{t_1}{\zeta_{c_1}}+\ldots+\frac{t_k}{\zeta_{c_k}}\right)_n}
\sum\limits_{y\in\; G\sim S(n)}z_1^{[xy]_{c_1}}z_2^{[xy]_{c_2}}\ldots z_k^{[xy]_{c_k}}
\overline{z_1^{[y]}z_2^{[y]}\ldots z_k^{[y]}}.
\end{equation}
Next  we use Lemma \ref{LEMMAZCYCLES} to rewrite this expression as
\begin{equation}
\begin{split}
&\frac{1}{\left|G\right|^n
\left(\frac{t_1}{\zeta_{c_1}}+\ldots+\frac{t_k}{\zeta_{c_k}}\right)_n}
\underset{|\lambda^{(1)}|+\ldots+|\lambda^{(k)}|=n}{\sum\limits_{\lambda^{(1)},\ldots,\lambda^{(k)}}}
\;
\underset{|\mu^{(1)}|+\ldots+|\mu^{(k)}|=n}{\sum\limits_{\mu^{(1)},\ldots,\mu^{(k)}}}
\biggl[\\
&\left(\prod\limits_{\Box\in\lambda^{(1)}}\frac{a_1+c(\Box)}{h(\Box)}\right)
\ldots\left(\prod\limits_{\Box\in\lambda^{(k)}}\frac{a_k+c(\Box)}{h(\Box)}\right)
\left(\prod\limits_{\Box\in\mu^{(1)}}\frac{\bar{a}_1+c(\Box)}{h(\Box)}\right)
\ldots\left(\prod\limits_{\Box\in\mu^{(k)}}\frac{\bar{a}_k+c(\Box)}{h(\Box)}\right)\\
&\times\sum\limits_{y\;\in G\sim S(n)}\chi^{\lambda^{(1)},\ldots,\lambda^{(k)}}(xy)
\chi^{\mu^{(1)},\ldots,\mu^{(k)}}(y^{-1})
\biggr].
\nonumber
\end{split}
\end{equation}
We know that for irreducible characters $\chi^{\pi_1}$ and $\chi^{\pi_2}$
of any finite group $\mathcal{G}$ the orthogonality condition
\begin{equation}
\frac{1}{\left|\mathcal{G}\right|}\sum\limits_{h\in\mathcal{G}}\chi^{\pi_1}(gh)\chi^{\pi_2}(h^{-1})=
\left\{
\begin{array}{ll}
\frac{\chi^{\pi_1}(g)}{\dim(\pi_1)}, & \pi_1\sim\pi_2,\\
0, & \mbox{otherwise}
\end{array}
\right.
\end{equation}
is satisfied. Therefore, the sum over $G\sim S(n)$ gives the normalized character parameterized by $\lambda^{(1)}$, $\ldots$, $\lambda^{(k)}$, and we obtain
\begin{equation}
\begin{split}
&\chi_{z_1,\ldots,z_k}(x)=
\frac{n!}{
\left(\frac{t_1}{\zeta_{c_1}}+\ldots+\frac{t_k}{\zeta_{c_k}}\right)_n}\\
&\times
\underset{|\lambda^{(1)}|+\ldots+|\lambda^{(k)}|=n}{\sum\limits_{\lambda^{(1)},\ldots,\lambda^{(k)}}}\biggl[
\left(\prod\limits_{\Box\in\lambda^{(1)}}\frac{\left(a_1+c(\Box)\right)\left(\bar{a}_1+c(\Box)\right)}{h^2(\Box)}\right)
\ldots\left(\prod\limits_{\Box\in\lambda^{(k)}}\frac{\left(a_k+c(\Box)\right)\left(\bar{a}_k+c(\Box)\right)}{h^2(\Box)}\right)\\
&\times\frac{\chi^{\lambda^{(1)},\ldots,\lambda^{(k)}}(x)}{\DIM(\lambda^{(1)},\ldots,\lambda^{(k)})}\biggr],
\nonumber
\end{split}
\end{equation}
where $x\in G\sim S(n)$.
The orthogonality of the irreducible characters $\gamma^{j}$  together with equation (\ref{Parametersa})
can be used to obtain the equality
$$
\frac{t_1}{\zeta_{c_1}}+\ldots+\frac{t_k}{\zeta_{c_k}}=a_1\bar{a}_1+\ldots+a_k\bar{a}_k.
$$
Taking into account formula (\ref{StandardZMeasure}) we get equation (\ref{SPHERICALFUNCTIONZMEASURES}) with
$M_{z_1,\ldots,z_k}^{(n)}\left(\lambda^{(1)},\ldots,\lambda^{(k)}\right)$ defined by equation
(\ref{MultipleZmeasures}).\qed\\
%%%%%%%%%%%%%%%%%%%%%%%%%%%%%%%%%%%%%%%%%%%%%%%%%%%%%%%%%%%%%%%%%%%%%%%
%%%%%%%%%%%%%%%%%%%%%%%%%%%%%%%%%%%%%%%%%%%%%%%%%%%%%%%%%%%%%%%%%%%%%%%%%%%%
%%%%%%%%%%%%%%%%%%%%%%%%%%%%%%%%%%%%%%%%%%%%%%%%%%%%%%%%%%%%%%%%%%%%%%%
%%%%%%%%%%%%%%%%%%%%%%%%%%%%%%%%%%%%%%%%%%%%%%%%%%%%%%%%%%%%%%%%%%%%%%%%%%%%
%%%%%%%%%%%%%%%%%%%%%%%%%%%%%%%%%%%%%%%%%%%%%%%%%%%%%%%%%%%%%%%%%%%%%%%
%%%%%%%%%%%%%%%%%%%%%%%%%%%%%%%%%%%%%%%%%%%%%%%%%%%%%%%%%%%%%%%%%%%%%%%%%%%%
\section{Spectral decomposition of $\chi_{z_1,\ldots,z_k}$}
%%%%%%%%%%%%%%%%%%%%%%%%%%%%%%%%%%%%%%%%%%%%%%%%%%%%%%%%%%%%%%%%%%%%%%%
%%%%%%%%%%%%%%%%%%%%%%%%%%%%%%%%%%%%%%%%%%%%%%%%%%%%%%%%%%%%%%%%%%%%%%%%%%%%
%%%%%%%%%%%%%%%%%%%%%%%%%%%%%%%%%%%%%%%%%%%%%%%%%%%%%%%%%%%%%%%%%%%%%%%
%%%%%%%%%%%%%%%%%%%%%%%%%%%%%%%%%%%%%%%%%%%%%%%%%%%%%%%%%%%%%%%%%%%%%%%%%%%%
%%%%%%%%%%%%%%%%%%%%%%%%%%%%%%%%%%%%%%%%%%%%%%%%%%%%%%%%%%%%%%%%%%%%%%%
%%%%%%%%%%%%%%%%%%%%%%%%%%%%%%%%%%%%%%%%%%%%%%%%%%%%%%%%%%%%%%%%%%%%%%%%%%%%
The function $\chi_{z_1,\ldots,z_k}:\; G_{\infty}=G\sim S(\infty)\longrightarrow\C$ is
a character of the group $G_{\infty}$ (in the sense of Definition
\ref{characters7}). As any character of $G_{\infty}$ it admits an integral representation called the \textit{spectral decomposition of a character}.
Such integral representation can be deduced as a consequence of the relation between
characters of $G_{\infty}$ and harmonic functions  on a certain branching graph $\Gamma\left(G\right)$. The graph $\Gamma\left(G\right)$ reflects the branching rules for the characters of irreducible representations of $G_n=G\sim S(n)$, and harmonic functions on $\Gamma\left(G\right)$ can be represented as  Poisson-like integrals over certain set $\Omega\left(G\right)$,  which is  a generalization of the Thoma set.
%%%%%%%%%%%%%%%%%%%%%%%%%%%%%%%%%%%%%%%%%%%%%%%%%%%%%%%%%%%%%%%%%%%%%%%
%%%%%%%%%%%%%%%%%%%%%%%%%%%%%%%%%%%%%%%%%%%%%%%%%%%%%%%%%%%%%%%%%%%%%%%%%%%%
%%%%%%%%%%%%%%%%%%%%%%%%%%%%%%%%%%%%%%%%%%%%%%%%%%%%%%%%%%%%%%%%%%%%%%%
%%%%%%%%%%%%%%%%%%%%%%%%%%%%%%%%%%%%%%%%%%%%%%%%%%%%%%%%%%%%%%%%%%%%%%%%%%%%
%%%%%%%%%%%%%%%%%%%%%%%%%%%%%%%%%%%%%%%%%%%%%%%%%%%%%%%%%%%%%%%%%%%%%%%
%%%%%%%%%%%%%%%%%%%%%%%%%%%%%%%%%%%%%%%%%%%%%%%%%%%%%%%%%%%%%%%%%%%%%%%%%%%%
\subsection{The branching rule for the characters of the finite wreath products}
%%%%%%%%%%%%%%%%%%%%%%%%%%%%%%%%%%%%%%%%%%%%%%%%%%%%%%%%%%%%%%%%%%%%%%%
%%%%%%%%%%%%%%%%%%%%%%%%%%%%%%%%%%%%%%%%%%%%%%%%%%%%%%%%%%%%%%%%%%%%%%%%%%%%
%%%%%%%%%%%%%%%%%%%%%%%%%%%%%%%%%%%%%%%%%%%%%%%%%%%%%%%%%%%%%%%%%%%%%%%
%%%%%%%%%%%%%%%%%%%%%%%%%%%%%%%%%%%%%%%%%%%%%%%%%%%%%%%%%%%%%%%%%%%%%%%%%%%%
%%%%%%%%%%%%%%%%%%%%%%%%%%%%%%%%%%%%%%%%%%%%%%%%%%%%%%%%%%%%%%%%%%%%%%%
%%%%%%%%%%%%%%%%%%%%%%%%%%%%%%%%%%%%%%%%%%%%%%%%%%%%%%%%%%%%%%%%%%%%%%%%%%%%
If $p<n$, then the canonical inclusion
$$
i_{n,p}: G\sim S(p)\longrightarrow G\sim S(n)
$$
is defined by
$$
i_{n,p}\left[\left(\left(g_1,\ldots,g_p\right),s\right)\right]
=\left(\left(g_1,\ldots,g_p,e_{G},\ldots,e_{G}\right),\;s(p+1)(p+2)\ldots n\right),
$$
where $e_{G}$ denotes the unit element of $G$. Under this inclusion $G\sim S(p)$ can be understood as a subgroup of $G\sim S(n)$. Let $\chi^{\Lambda_n^{(k)}}$ be the character of the irreducible representation of $G\sim S(n)$ parameterized by $\Lambda_n^{(k)}=\left(\lambda^{(1)},\ldots,\lambda^{(k)}\right)\in\Y_n^{(k)}$. In order to present a formula for the restriction $\chi^{\Lambda_n^{(k)}}\biggl|_{G\sim S(n-1)}$ of $\chi^{\Lambda_n^{(k)}}$
to the subgroup $G\sim S(n-1)$ of $G\sim S(n)$ we need the following notation. Let $\Lambda_n^{(k)}\in\Y_n^{(k)}$ and $\tLambda_{n-1}^{(k)}\in\Y_{n-1}^{(k)}$.
Thus $\Lambda_n^{(k)}=\left(\lambda^{(1)},\ldots,\lambda^{(k)}\right)$ and
$\tLambda_{n-1}^{(k)}=\left(\mu^{(1)},\ldots,\mu^{(k)}\right)$ are such that
$|\lambda^{(1)}|+\ldots+|\lambda^{(k)}|=n$ and $|\mu^{(1)}|+\ldots+|\mu^{(k)}|=n-1$.
Assume that there exist $l\in\{1,\ldots,k\}$ such that $\mu^{(l)}\nearrow\lambda^{(l)}$
(i.e. $\lambda^{(l)}$ is obtained from $\mu^{(l)}$ by adding one box), and such that $\mu^{(i)}=\lambda^{(i)}$ for each $i$, $i\in\left\{1,\ldots,k\right\}$, $i\neq l$.
Then we write $\tLambda_{n-1}^{(k)}\nearrow\Lambda_n^{(k)}$. With this notation the branching rule for the characters of the finite wreath product $G\sim S(n)$ can be written as
\begin{equation}\label{2.3.branching}
\chi^{\Lambda_n^{(k)}}\biggl|_{G\sim S(n-1)}
=\sum\limits_{\tLambda_{n-1}^{(k)}:\;\tLambda_{n-1}^{(k)}\nearrow\Lambda_n^{(k)}}\Upsilon\left(\tLambda_{n-1}^{(k)},\Lambda_n^{(k)}\right)\chi^{\tLambda_{n-1}^{(k)}},
\end{equation}
where
\begin{equation}\label{2.3.multiplicity}
\Upsilon\left(\tLambda_{n-1}^{(k)},\Lambda_n^{(k)}\right)=\left\{
\begin{array}{ll}
d_1, & \mu^{(1)}\nearrow\lambda^{(1)},\\
d_2, & \mu^{(2)}\nearrow\lambda^{(2)}, \\
\vdots, & \\
d_k, & \mu^{(k)}\nearrow\lambda^{(k)},
\end{array}
\right.
\end{equation}
and $d_l=\gamma^l\left(e_{G}\right)$, $l=1,\ldots,k$.
The branching rule for the characters (equation (\ref{2.3.branching})) gives rise to the recurrence
relation for the dimensions of irreducible representations,
\begin{equation}
\DIM\left(\Lambda_n^{(k)}\right)
=\sum\limits_{\tLambda_{n-1}^{(k)}:\;\tLambda_{n-1}^{(k)}\nearrow\Lambda_n^{(k)}}\Upsilon\left(\tLambda_{n-1}^{(k)},\Lambda_n^{(k)}\right)\DIM\left(\tLambda_{n-1}^{(k)}\right).
\end{equation}
%%%%%%%%%%%%%%%%%%%%%%%%%%%%%%%%%%%%%%%%%%%%%%%%%%%%%%%
%%%%%%%%%%%%%%%%%%%%%%%%%%%%%%%%%%%%%%%%%%%%%%%%%%%%%%%%%
%%%%%%%%%%%%%%%%%%%%%%%%%%%%%%%%%%%%%%%%%%%%%%%%%%%%%%%%%%
%%%%%%%%%%%%%%%%%%%%%%%%%%%%%%%%%%%%%%%%%%%%%%%%%%%%%%%
%%%%%%%%%%%%%%%%%%%%%%%%%%%%%%%%%%%%%%%%%%%%%%%%%%%%%%%%%
%%%%%%%%%%%%%%%%%%%%%%%%%%%%%%%%%%%%%%%%%%%%%%%%%%%%%%%%%%
\subsection{The branching graph $\Gamma(G)$. Representation of harmonic functions on $\Gamma\left(G\right)$}
%%%%%%%%%%%%%%%%%%%%%%%%%%%%%%%%%%%%%%%%%%%%%%%%%%%%%%%
%%%%%%%%%%%%%%%%%%%%%%%%%%%%%%%%%%%%%%%%%%%%%%%%%%%%%%%%%
%%%%%%%%%%%%%%%%%%%%%%%%%%%%%%%%%%%%%%%%%%%%%%%%%%%%%%%%%%
\subsubsection{The branching graph $\Gamma(G)$}
Let $\Y^{(k)}$ denote the union of the sets $\Y_n^{(k)}$
(with the understanding that $\Y_0^{(k)}$ contains the element
$\Lambda_0^{(k)}=\left(\emptyset,\ldots,\emptyset\right)$ only).
We define a branching graph $\Gamma\left(G\right)$ with the vertex set $\Y^{(k)}$  by declaring that a pair of vertices $\left(\Lambda_{n-1}^{(k)},\tLambda_n^{(k)}\right)$ is connected by an edge of multiplicity $\Upsilon\left(\Lambda_{n-1}^{(k)},\tLambda_n^{(k)}\right)$ if and only if $\Lambda_{n-1}^{(k)}\nearrow\tLambda_n^{(k)}$.
In other words, the graph $\Gamma\left(G\right)$ is the branching graph which reflects the branching rule for the characters of irreducible representations of $G\sim S(n)$ (see equation (\ref{2.3.branching})). In particular, $\DIM\left(\Lambda_n^{(k)}\right)$ can be understood as the number of oriented paths from $\Lambda_0^{(k)}=\left(\emptyset,\ldots,\emptyset\right)$ to $\Lambda_n^{(k)}$ on the branching graph $\Gamma(G)$.
%%%%%%%%%%%%%%%%%%%%%%%%%%%%%%%%%%%%%%%%%%%%%%%%%%%%%%%
%%%%%%%%%%%%%%%%%%%%%%%%%%%%%%%%%%%%%%%%%%%%%%%%%%%%%%%%%
%%%%%%%%%%%%%%%%%%%%%%%%%%%%%%%%%%%%%%%%%%%%%%%%%%%%%%%%%%
\subsubsection{Harmonic functions and coherent systems of measures}
\begin{defn}\label{DEFINITIONHARMONICFUNCTIONG} A function  $\varphi:\; \Gamma\left(G\right)\longrightarrow\R_{\geq 0}$ is called harmonic if for each $n=1,2,\ldots$
\begin{equation}\label{3.2.1.ha}
\varphi\left(\Lambda_{n-1}^{(k)}\right)
=\sum\limits_{\tLambda_n^{(k)}:\;\tLambda_n^{(k)}\searrow\Lambda_{n-1}^{(k)}}
\Upsilon\left(\Lambda_{n-1}^{(k)},\tLambda_n^{(k)}\right)\varphi\left(\tLambda_n^{(k)}\right),
\end{equation}
where $\Upsilon$ is defined by (\ref{2.3.multiplicity}), and $\varphi\left(\Lambda_0^{(k)}\right)=1$.
\end{defn}
Let $\varphi $ be a harmonic function on $\Gamma\left(G\right)$. Set
\begin{equation}\label{3.2.2.h}
\mathcal{M}_n^{(k)}\left(\Lambda_n^{(k)}\right)=\DIM\left(\Lambda_n^{(k)}\right)
\varphi\left(\Lambda_n^{(k)}\right),\;\; n=0,1,2,\ldots .
\end{equation}
Then $\mathcal{M}_n^{(k)}$ is a probability measure on $\Y_n^{(k)}$. The sequence
$\left(\mathcal{M}_n^{(k)}\right)_{n=0}^{\infty}$
(where each element is defined by equation (\ref{3.2.2.h})) is called a coherent system of probability measures associated with the harmonic function $\varphi$ on $\Gamma\left(G\right)$.

If $\Lambda_{n-1}^{(k)}=\left(\mu^{(1)},\ldots,\mu^{(k)}\right)$, and
$\tLambda_n^{(k)}=\left(\lambda^{(1)},\ldots,\lambda^{(k)}\right)$, then it follows from equations (\ref{3.2.1.ha}), (\ref{3.2.2.h}) that
\begin{equation}\label{3.3.harmonic}
\frac{\mathcal{M}_{n-1}^{(k)}\left(\Lambda_{n-1}^{(k)}\right)}{\DIM\left(\Lambda_{n-1}^{(k)}\right)}
=\sum\limits_{j=1}^{k}d_j\underset{\lambda^{(i)}=\mu^{(i)},\; i\neq j}{\sum\limits_{\lambda^{(j)}\searrow\;\mu^{(j)}}}
\frac{\mathcal{M}_{n}^{(k)}\left(\tLambda_{n}^{(k)}\right)}{\DIM\left(\tLambda_{n}^{(k)}\right)},
\end{equation}
for each coherent system $\left(\mathcal{M}_n^{(k)}\right)_{n=0}^{\infty}$ of probability measures associated with a harmonic function on $\Gamma\left(G\right)$.
%%%%%%%%%%%%%%%%%%%%%%%%%%%%%%%%%%%%%%%%%%%%%%%%%%%%%%%
%%%%%%%%%%%%%%%%%%%%%%%%%%%%%%%%%%%%%%%%%%%%%%%%%%%%%%%%%
%%%%%%%%%%%%%%%%%%%%%%%%%%%%%%%%%%%%%%%%%%%%%%%%%%%%%%%%%%
\subsubsection{Representation of harmonic functions}
%%%%%%%%%%%%%%%%%%%%%%%%%%%%%%%%%%%%%%%%%%%%%%%%%%%%%%%%%%%%%%%%%%%%%%%
%%%%%%%%%%%%%%%%%%%%%%%%%%%%%%%%%%%%%%%%%%%%%%%%%%%%%%%%%%%%%%%%%%%%%%%%%%%%
%%%%%%%%%%%%%%%%%%%%%%%%%%%%%%%%%%%%%%%%%%%%%%%%%%%%%%%%%%%%%%%%%%%%%%%
%%%%%%%%%%%%%%%%%%%%%%%%%%%%%%%%%%%%%%%%%%%%%%%%%%%%%%%%%%%%%%%%%%%%%%%%%%%%
%%%%%%%%%%%%%%%%%%%%%%%%%%%%%%%%%%%%%%%%%%%%%%%%%%%%%%%%%%%%%%%%%%%%%%%
%%%%%%%%%%%%%%%%%%%%%%%%%%%%%%%%%%%%%%%%%%%%%%%%%%%%%%%%%%%%%%%%%%%%%%%%%%%%
Hora and Hirai \cite{HoraHirai} proved that there is
one-to-one correspondence between harmonic functions on $\Gamma\left(G\right)$ and probability measures on the set $\Omega\left(G\right)$ defined by
\begin{equation}\label{3.3.1.Thoma}
\begin{split}
&\Omega(G)=\biggl\{(\alpha,\beta,\delta)\biggr|
\alpha=\left(\alpha^{(1)},\ldots,\alpha^{(k)}\right),\beta=\left(\beta^{(1)},\ldots,\beta^{(k)}\right),
\delta=\left(\delta^{(1)},\ldots,\delta^{(k)}\right);\\
&\alpha^{(l)}=\left(\alpha_1^{(l)}\geq\alpha_2^{(l)}\geq\ldots\geq 0\right), \beta^{(l)}=\left(\beta_1^{(l)}\geq\beta_2^{(l)}\geq\ldots\geq 0\right),\\
&\delta^{(1)}\geq 0,\ldots,\delta^{(k)}\geq 0,\\
&\mbox{where}\;\;
\sum\limits_{i=1}^{\infty}\alpha_i^{(l)}+\beta_i^{(l)}\leq\delta^{(l)},\; 1\leq l\leq k,\;\mbox{and}\;\sum\limits_{l=1}^k\delta^{(l)}=1\biggl\}.
\end{split}
\end{equation}
In order to present this result we use the extended power symmetric functions $P_{r,l}^{o}\left(.\right)$ on $\Omega\left(G\right)$. These functions are obtained by specializing the power sums $p_1$, $p_2$, $\ldots$ to the following expressions
\begin{equation}\label{3.3.2.PowerSums}
\begin{split}
&p_1\longrightarrow P^{o}_{1,l}\left(\alpha,\beta,\delta\right)=\delta^{(l)},\\
&p_r\longrightarrow P^{o}_{r,l}\left(\alpha,\beta,\delta\right)
=\sum\limits_{i=1}^{\infty}\left(\lambda_i^{(l)}\right)^r+(-1)^{r-1}
\sum\limits_{i=1}^{\infty}\left(\beta_i^{(l)}\right)^r,\; r=2,3,\ldots .
\end{split}
\end{equation}
Here $l\in\left\{1,\ldots,k\right\}$.
Given the extended power symmetric functions $P_{r,l}^{o}$ we introduce
the extended Schur functions $S_{\lambda^{(l)}}^{o}(.)$ on $\Omega\left(G\right)$.  Namely, to obtain $S_{\lambda^{(l)}}^{o}(.)$  we express the Schur function
$s_{\lambda^{(l)}}$ as a polynomial in variables $p_1$, $p_2$, $\ldots$,
and replace each $p_r$ by the extended power symmetric functions $P_{r,l}^{o}(.)$ defined by equation (\ref{3.3.2.PowerSums}).
With this notation we are ready to state the representation theorem for harmonic functions on $\Gamma\left(G\right)$.
\begin{thm}\label{THEOREMANALOGTHOMA}
There is a bijective correspondence between the set of  harmonic functions $\varphi$ on $\Gamma(G)$, and the set of  probability measures $P$ on the generalized Thoma set $\Omega(G)$. This correspondence is determined by
\begin{equation}\label{HarmonicFunctionProbabilityMeasure}
\varphi\left(\Lambda_n^{(k)}\right)=\int\limits_{\Omega(G)}\mathbb{K}\left(\Lambda_n^{(k)},\omega\right)P(d\omega),\;\;\forall \Lambda_n^{(k)}=\left(\lambda^{(1)},\ldots,\lambda^{(k)}\right)\in\Y_n^{(k)},
\end{equation}
where $n=1,2,...$, and $k$ is the number of conjugacy classes in $G$.
 The  kernel $\mathbb{K}\left(\Lambda_n^{(k)},\omega\right)$ is given by
\begin{equation}\label{LimitingMartinKernel}
\mathbb{K}\left(\Lambda_n^{(k)},\omega\right)=\prod\limits_{l=1}^k\frac{1}{d_l^{|\lambda^{(l)}|}}
S_{\lambda^{(l)}}^{o}\left(\alpha,\beta,\delta\right).
\end{equation}
Here $S_{\lambda^{(l)}}^{o}\left(\alpha,\beta,\delta\right)$ denotes the extended Schur function parameterized by $\lambda^{(l)}$, and $d_l$ are the dimensions of irreducible representations of $G$.
\end{thm}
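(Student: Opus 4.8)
The plan is to realize Theorem~\ref{THEOREMANALOGTHOMA} as an instance of the general theory of the minimal (Martin) boundary of a graded graph with formal multiplicities, via the ergodic method of Vershik and Kerov; see Kerov, Olshanski and Vershik \cite{KerovOlshanskiVershik} and Borodin and Olshanski \cite{BorodinOlshanskiBook}. The first step is to transport the harmonicity condition of Definition~\ref{DEFINITIONHARMONICFUNCTIONG} into the algebra $\Sym(G)$. Using the characteristic map of Section~\ref{SectionAlgebraSymmetricFunctions1}, which identifies the graded space $R(G)$ with $\Sym(G)=\C[p_r(\gamma^l):r\geq 1,\,1\leq l\leq k]$ and sends $\chi^{\Lambda_n^{(k)}}$ to $s_{\lambda^{(1)}}(\gamma^1)\cdots s_{\lambda^{(k)}}(\gamma^k)$, a harmonic function $\varphi$ corresponds to a positive, normalized linear functional on $\Sym(G)$ that is consistent with the comultiplication of $\Sym(G)$ (equivalently, with the down-transition operators of $\Gamma(G)$). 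The crucial structural observation is that, because the multiplicity $\Upsilon$ in (\ref{2.3.multiplicity}) only ever adds a single box to one of the $k$ Young diagrams, with weight $d_l$, the branching graph $\Gamma(G)$ splits as a $d_l$-weighted superposition of $k$ independent copies of the Young graph, tied together only through the partition of the total mass recorded by the parameters $\delta^{(l)}$. The normalization $1/d_l^{|\lambda^{(l)}|}$ appearing in (\ref{LimitingMartinKernel}) is exactly what renders each component a clean copy of the Young graph.

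Second, I would identify the extreme harmonic functions. The extreme points of the convex, metrizable set of harmonic functions are precisely the functionals that are multiplicative across the $k$ tensor factors of $\Sym(G)\cong\Sym^{\otimes k}$, so such a $\varphi$ is determined by its restriction to each copy of $\Sym$ together with a weight $\delta^{(l)}\geq 0$ satisfying $\sum_l\delta^{(l)}=1$. On each factor the positivity of $\varphi$ forces the corresponding specialization of $\Sym$ to be a Thoma, i.e.\ totally positive, specialization: by Thoma's theorem \cite{Thoma} (equivalently Edrei's classification of totally positive sequences), the admissible specializations are exactly those given by two ordered nonnegative sequences $\alpha^{(l)}=(\alpha_i^{(l)})$ and $\beta^{(l)}=(\beta_i^{(l)})$ subject to $\sum_i(\alpha_i^{(l)}+\beta_i^{(l)})\leq\delta^{(l)}$. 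This is precisely the data defining $\Omega(G)$ in (\ref{3.3.1.Thoma}), and substituting it into the power sums via (\ref{3.3.2.PowerSums}) produces the extended Schur functions, so that the value of the extreme harmonic function indexed by $\omega$ at the vertex $\Lambda_n^{(k)}$ is the kernel $\mathbb{K}(\Lambda_n^{(k)},\omega)$ of (\ref{LimitingMartinKernel}).

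Third, the integral representation (\ref{HarmonicFunctionProbabilityMeasure}) follows from Choquet theory. The set of harmonic functions on $\Gamma(G)$ is a compact metrizable Choquet simplex in the topology of pointwise convergence on the countable vertex set $\Y^{(k)}$, so every harmonic function is the barycenter of a \emph{unique} probability measure $P$ supported on the extreme points $\Omega(G)$; uniqueness of $P$ is exactly the statement that $\varphi\longleftrightarrow P$ is a bijection. Thus, once the extreme boundary is identified with $\Omega(G)$, the correspondence and the formula (\ref{HarmonicFunctionProbabilityMeasure}) are immediate.

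The main obstacle is the \emph{completeness} of the boundary: one must show there are no extreme harmonic functions beyond those listed in $\Omega(G)$. This is the genuinely analytic part and cannot be obtained by soft arguments. Concretely, one carries out the ergodic method: take an arbitrary extreme harmonic $\varphi$, realize it as a limit of normalized kernels $\mathbb{K}(\Lambda_n^{(k)},\Lambda_N^{(k)})$ along a sequence of vertices $\Lambda_N^{(k)}$ escaping to the boundary, and prove that the rescaled row lengths $\lambda_i^{(l)}/N$ and the rescaled transposed parts converge to limits $\alpha_i^{(l)}$ and $\beta_i^{(l)}$, while the relative sizes $|\lambda^{(l)}|/N$ converge to $\delta^{(l)}$. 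This demands uniform asymptotic control of ratios of extended Schur functions, a multivariate analogue of the asymptotics underlying the classical Thoma theorem, together with the verification that the limiting functional is multiplicative and that distinct boundary points yield distinct harmonic functions. Because the kernel factorizes over the $k$ components, these asymptotics reduce component by component to the known asymptotics for the Young graph, the only new ingredient being the joint control of the mass split $\delta=(\delta^{(1)},\ldots,\delta^{(k)})$; this is the route taken by Hora and Hirai \cite{HoraHirai}.
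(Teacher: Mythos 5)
Your proposal is correct and in substance coincides with the paper's treatment: the paper proves Theorem \ref{THEOREMANALOGTHOMA} simply by citing Hora and Hirai \cite{HoraHirai} (Theorems 2.5 and 3.1), whose argument is precisely the Martin-boundary/ergodic-method route you outline. The soft scaffolding you add (the ring theorem identifying extreme harmonic functions with multiplicative specializations of $\Sym(G)\cong\Sym^{\otimes k}$, the Edrei--Thoma classification on each factor, Choquet simplex uniqueness) is consistent with that source, and you correctly isolate the boundary-completeness asymptotics as the one step that cannot be obtained softly, deferring it---exactly as the paper does---to \cite{HoraHirai}.
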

\begin{proof}See Hora and Hirai \cite{HoraHirai}, Theorem 2.5 and Theorem 3.1.
\end{proof}
\begin{rem} Quite different proof of Theorem \ref{THEOREMANALOGTHOMA} can be found in Strahov \cite{StrahovMPS}, where harmonic functions on the Jack deformation $\Gamma_{\theta}\left(G\right)$ of $\Gamma\left(G\right)$ are considered.
\end{rem}
%%%%%%%%%%%%%%%%%%%%%%%%%%%%%%%%%%%%%%%%%%%%%%%%%%%%%%%
%%%%%%%%%%%%%%%%%%%%%%%%%%%%%%%%%%%%%%%%%%%%%%%%%%%%%%%%%
%%%%%%%%%%%%%%%%%%%%%%%%%%%%%%%%%%%%%%%%%%%%%%%%%%%%%%%%%%
\subsection{Coherent systems of probability measures on $\Gamma\left(G\right)$ and characters}
%%%%%%%%%%%%%%%%%%%%%%%%%%%%%%%%%%%%%%%%%%%%%%%%%%%%%%%
%%%%%%%%%%%%%%%%%%%%%%%%%%%%%%%%%%%%%%%%%%%%%%%%%%%%%%%%%
%%%%%%%%%%%%%%%%%%%%%%%%%%%%%%%%%%%%%%%%%%%%%%%%%%%%%%%%%%
The following theorem relates a character of $G\sim S(\infty)$ with a coherent system of probability measures on the branching graph $\Gamma \left(G\right)$.
\begin{thm}\label{THEOREM4.1.2.CharCSystems}
Denote by $\chi$ a character of $G\sim S(\infty)$  (in the sense of Definition
\ref{characters7}), and by $\chi_n$ its restriction to $G\sim S(n)$.
Then
\begin{equation}\label{4.1.2.R.CH.S}
\chi_n=\sum\limits_{\Lambda_n^{(k)}\in\Y_n^{(k)}}
\mathcal{M}_n\left(\Lambda_n^{(k)}\right)
\frac{\chi^{\Lambda_n^{(k)}}}{\DIM\Lambda_n^{(k)}},
\end{equation}
where $\chi^{\Lambda_n^{(k)}}$ is the character of the irreducible representation of $G\sim S(n)$ parameterized by $\Lambda_n^{(k)}$, and $\mathcal{M}_n$ is a probability measure on $\Y_n^{(k)}$. Moreover, equation (\ref{4.1.2.R.CH.S}) determines a bijective correspondence
$$
\chi\longleftrightarrow\left(\mathcal{M}_n\right)_{n=0}^{\infty}
$$
between the characters of $G\sim S(\infty)$ and the coherent systems of probability measures on $\Gamma\left(G\right)$.
\end{thm}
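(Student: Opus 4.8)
The plan is to treat the correspondence $\chi\longleftrightarrow(\mathcal{M}_n)_{n=0}^{\infty}$ as the standard bijection between characters of an inductive limit of finite groups and coherent systems on its branching graph, specialized to $G_{\infty}=\bigcup_n G_n$.

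First I would establish formula (\ref{4.1.2.R.CH.S}) for a given character $\chi$. Its restriction $\chi_n=\chi|_{G_n}$ is a normalized, central, positive-definite function on the \emph{finite} group $G_n$, so it expands uniquely in the irreducible characters as $\chi_n=\sum_{\Lambda_n^{(k)}}a_{\Lambda_n^{(k)}}\chi^{\Lambda_n^{(k)}}$, with $a_{\Lambda_n^{(k)}}=\langle\chi_n,\chi^{\Lambda_n^{(k)}}\rangle_{G_n}$ by Schur orthogonality. Setting $\mathcal{M}_n(\Lambda_n^{(k)})=a_{\Lambda_n^{(k)}}\DIM(\Lambda_n^{(k)})$ immediately yields (\ref{4.1.2.R.CH.S}), and $\chi_n(e)=1$ together with $\chi^{\Lambda_n^{(k)}}(e)=\DIM(\Lambda_n^{(k)})$ gives $\sum_{\Lambda_n^{(k)}}\mathcal{M}_n(\Lambda_n^{(k)})=1$. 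The one genuinely non-formal point is positivity of these coefficients, and I would argue it by writing $\chi_n(x)=\langle T(x)v,v\rangle$ for a unitary representation $(T,H)$ of $G_n$ and recognizing $a_{\Lambda_n^{(k)}}=\frac{1}{\DIM(\Lambda_n^{(k)})}\langle P_{\Lambda_n^{(k)}}v,v\rangle$, where $P_{\Lambda_n^{(k)}}$ is the orthogonal projection of $H$ onto the corresponding isotypic component. Being an orthogonal projection, $\langle P_{\Lambda_n^{(k)}}v,v\rangle=\|P_{\Lambda_n^{(k)}}v\|^2\geq 0$, so $\mathcal{M}_n$ is a genuine probability measure on $\Y_n^{(k)}$.

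Next I would check that $(\mathcal{M}_n)_{n=0}^{\infty}$ is coherent. Since $\chi$ is a single function on $G_{\infty}$, its restrictions are compatible: $\chi_n|_{G_{n-1}}=\chi_{n-1}$. Restricting the expansion of $\chi_n$ and inserting the branching rule (\ref{2.3.branching})--(\ref{2.3.multiplicity}) expresses $\chi_{n-1}$ as a combination of the $\chi^{\tLambda_{n-1}^{(k)}}$; comparing with the direct expansion of $\chi_{n-1}$ and using linear independence of the irreducible characters of $G_{n-1}$ yields exactly the harmonicity relation (\ref{3.3.harmonic}). Hence $\varphi(\Lambda_n^{(k)})=\mathcal{M}_n(\Lambda_n^{(k)})/\DIM(\Lambda_n^{(k)})$ is harmonic on $\Gamma(G)$ and $(\mathcal{M}_n)$ is a coherent system.

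Finally I would prove bijectivity. Injectivity is immediate, since $\chi$ is determined by all its restrictions $\chi_n$ through (\ref{4.1.2.R.CH.S}) and $G_{\infty}=\bigcup_n G_n$. For surjectivity, given a coherent system I would define $\chi_n$ by (\ref{4.1.2.R.CH.S}); running the computation of the previous paragraph in reverse (coherency implies $\chi_n|_{G_{n-1}}=\chi_{n-1}$) shows these functions are compatible and glue to a well-defined $\chi$ on $G_{\infty}$. It then remains to verify the three axioms of Definition \ref{characters7}: normalization follows from $\sum_{\Lambda_n^{(k)}}\mathcal{M}_n(\Lambda_n^{(k)})=1$; centrality on $G_{\infty}$ holds because each $\chi_n$ is a combination of class functions on $G_n$ and any conjugation of $g\in G_{\infty}$ takes place inside some $G_m$; and positive-definiteness follows because any finite family $g_1,\dots,g_m\in G_{\infty}$ lies in a single $G_N$, where the matrix $[\chi(g_j^{-1}g_i)]=[\chi_N(g_j^{-1}g_i)]$ is positive semidefinite as $\chi_N$ is a non-negative combination of the positive-definite characters $\chi^{\Lambda_N^{(k)}}$. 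I expect the positivity argument of the first paragraph to be the only real obstacle; the remaining steps are the standard passage to the inductive limit.
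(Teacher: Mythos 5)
Your proof is correct and follows essentially the same route as the paper: expand each restriction $\chi_n$ in the normalized irreducible characters of $G\sim S(n)$, derive coherence of $\left(\mathcal{M}_n\right)$ from the branching rule by comparing coefficients, and run the argument in reverse for the converse direction. The only difference is that where the paper invokes Proposition 1.6 of Borodin--Olshanski as a black box for nonnegativity of the coefficients and for positive definiteness of the reconstructed $\chi$, you supply the standard direct argument (GNS realization $\chi_n(x)=\left\langle T(x)v,v\right\rangle$ and the identity $a_{\Lambda_n^{(k)}}=\frac{1}{\DIM\Lambda_n^{(k)}}\left\langle P_{\Lambda_n^{(k)}}v,v\right\rangle$ via isotypic projections), which is a correct self-contained substitute for that citation.
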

\begin{proof}Let $\chi$ be a character of $G\sim S(\infty)$. Then its restriction $\chi_n$ to $G\sim S(n)$ is a normalized central function on $G\sim S(n)$. Equation (\ref{4.1.2.R.CH.S}) is a
representation of $\chi_n$ as a linear combination of normalized irreducible characters of $G\sim S(n)$. Since $\chi_n$ is normalized and central,
the coefficients $\mathcal{M}_n\left(\Lambda_n^{(k)}\right)$ are such that
$$
\mathcal{M}_n\left(\Lambda_n^{(k)}\right)\geq 0,\;\;\sum\limits_{\Lambda_n^{(k)}\in\Y_n^{(k)}}\mathcal{M}_n\left(\Lambda_n^{(k)}\right)=1,
$$
as it follows from specialization of a general theorem on characters (Borodin and Olshanski \cite{BorodinOlshanskiBook}, Proposition 1.6) to the case of $G\sim S(n)$. The fact that $\left(\mathcal{M}_n\right)_{n=0}^{\infty}$ is a coherent system  follows from the branching rule for the irreducible characters of $G\sim S(n)$, see equation (\ref{2.3.branching}).

In the opposite direction, assume that $\left(\mathcal{M}_n\right)_{n=0}^{\infty}$ is a coherent system of probability measures on $\Gamma\left(G\right)$, and define a function
$$
\chi:\;\; G\sim S(\infty)\rightarrow\C
$$
by its restriction  $\chi_n$ to $G\sim S(n)$,
where each $\chi_n$ is defined by the right-hand side of equation (\ref{4.1.2.R.CH.S}).
Again, Proposition 1.6 in Borodin and Olshanski \cite{BorodinOlshanskiBook} can be applied to conclude that each $\chi_n$ is positive definite, central, and normalized. The fact that $\left(\mathcal{M}_n\right)_{n=0}^{\infty}$ is a coherent system of probability measures implies equation (\ref{3.3.harmonic}). Using this equation we check that the consistency condition
$\chi_n |_{G\sim S(n-1)}=\chi_{n-1}$ holds true.
\end{proof}
%%%%%%%%%%%%%%%%%%%%%%%%%%%%%%%%%%%%%%%%%%%%%%%%%%%%%%%%%%%%
%%%%%%%%%%%%%%%%%%%%%%%%%%%%%%%%%%%%%%%%%%%%%%%%%%%%%%%%%%%%%
%%%%%%%%%%%%%%%%%%%%%%%%%%%%%%%%%%%%%%%%%%%%%%%%%%%%%%%%%%%%%%%
\subsection{Integral representation of $\chi_{z_1,\ldots,z_k}$}\label{SectionIntRepChizzz}
Let $\chi$ be any character of $G\sim S(\infty)$. The following Proposition provides an integral representation for $\chi$.
\begin{prop}\label{Proposition7.1.1}(a) For any character $\chi$ of $G\sim S(\infty)$ there exists a probability measure $P$ on the generalized Thoma set $\Omega(G)$ such that
\begin{equation}\label{7.1.1.1}
\chi(x)=\int\limits_{\Omega(G)}f_{\omega}(x)P(d\omega),\;\; x\in G\sim S(\infty).
\end{equation}
The function
$
f_{\omega}:\;G\sim S(\infty)\longrightarrow\C
$
is parameterized by a point $\omega=\left(\alpha,\beta,\delta\right)$ of the generalized Thoma set $\Omega(G)$, and it is
defined by the formula
\begin{equation}\label{7.1.1.2}
f_{\omega}(x)=\sum\limits_{\Lambda_n^{(k)}\in\Y_n^{(k)}}
\left[\prod\limits_{l=1}^k\frac{1}{d_l^{|\lambda^{(l)}|}}
S_{\lambda^{(l)}}^{o}\left(\alpha,\beta,\delta\right)\right]
\chi^{\Lambda_n^{(k)}}(x),
\end{equation}
where $x\in G\sim S(n)$, ,
$S_{\lambda^{(l)}}^{o}\left(\alpha,\beta,\delta\right)$ denotes the extended Schur function parameterized by $\lambda^{(l)}$, and
$\chi^{\Lambda_n^{(k)}}$ is the character of the irreducible representation of $G\sim S(n)$ parameterized by $\Lambda_n^{(k)}=\left(\lambda^{(1)},\ldots,\lambda^{(k)}\right)\in\Y_n^{(k)}$.
\\
(b) Given $\chi\in\mathcal{X}\left(G\sim S(\infty)\right)$ the measure $P$ is the unique probability measure on $\Omega\left(G\right)$ for which (\ref{7.1.1.1}) is satisfied.\\
(c) Conversely, each probability measure $P$ on the generalized Thoma set $\Omega(G)$ gives rise to a character $\chi$ of $G\sim S(\infty)$, this character is given by equations (\ref{7.1.1.1}) and (\ref{7.1.1.2}).
\end{prop}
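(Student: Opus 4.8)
The plan is to obtain Proposition \ref{Proposition7.1.1} by composing the two bijections already established: the correspondence between characters of $G\sim S(\infty)$ and coherent systems of probability measures on $\Gamma(G)$ (Theorem \ref{THEOREM4.1.2.CharCSystems}), and the correspondence between harmonic functions on $\Gamma(G)$ and probability measures on $\Omega(G)$ (Theorem \ref{THEOREMANALOGTHOMA}). The bridge between them is the elementary dictionary (\ref{3.2.2.h}), namely $\mathcal{M}_n^{(k)}\left(\Lambda_n^{(k)}\right)=\DIM\left(\Lambda_n^{(k)}\right)\varphi\left(\Lambda_n^{(k)}\right)$, which is itself a bijection because $\DIM\left(\Lambda_n^{(k)}\right)>0$. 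Chaining character $\leftrightarrow$ coherent system $\leftrightarrow$ harmonic function $\leftrightarrow$ measure $P$ on $\Omega(G)$ yields a bijection $\chi\leftrightarrow P$, and the remaining task is only to verify that the resulting formula is exactly (\ref{7.1.1.1})--(\ref{7.1.1.2}).

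For part (a) I would start from the expansion (\ref{4.1.2.R.CH.S}) of $\chi_n=\chi|_{G\sim S(n)}$ and substitute $\mathcal{M}_n\left(\Lambda_n^{(k)}\right)=\DIM\left(\Lambda_n^{(k)}\right)\varphi\left(\Lambda_n^{(k)}\right)$, which cancels the factor $\DIM\left(\Lambda_n^{(k)}\right)$ in the denominator and gives $\chi_n(x)=\sum_{\Lambda_n^{(k)}\in\Y_n^{(k)}}\varphi\left(\Lambda_n^{(k)}\right)\chi^{\Lambda_n^{(k)}}(x)$. I would then insert the integral representation (\ref{HarmonicFunctionProbabilityMeasure}) of $\varphi$ with the explicit kernel (\ref{LimitingMartinKernel}) and interchange the finite sum over $\Y_n^{(k)}$ with the integral over $\Omega(G)$. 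The bracket that appears under the integral is precisely $\sum_{\Lambda_n^{(k)}}\prod_{l=1}^k d_l^{-|\lambda^{(l)}|}S_{\lambda^{(l)}}^{o}\left(\alpha,\beta,\delta\right)\chi^{\Lambda_n^{(k)}}(x)$, i.e. the function $f_\omega(x)$ of (\ref{7.1.1.2}), so that $\chi_n(x)=\int_{\Omega(G)}f_\omega(x)P(d\omega)$. Since every $x\in G\sim S(\infty)$ lies in some $G\sim S(n)$, this establishes (\ref{7.1.1.1}). The interchange is harmless because the sum is finite for each fixed $n$, and $\omega\mapsto f_\omega(x)$ is Borel measurable by the explicit polynomial form of the extended Schur functions in the coordinates on $\Omega(G)$.

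Parts (b) and (c) are then read off from the bijectivity of the composite correspondence. For uniqueness (b): if $P_1,P_2$ both satisfy (\ref{7.1.1.1}) for the same $\chi$, then running the computation above backwards shows that both induce, via (\ref{HarmonicFunctionProbabilityMeasure}), the same harmonic function $\varphi$, namely the one attached to $\chi$ through (\ref{4.1.2.R.CH.S}) and (\ref{3.2.2.h}); the injectivity asserted in Theorem \ref{THEOREMANALOGTHOMA} then forces $P_1=P_2$. For the converse (c): given $P$, define $\varphi$ by (\ref{HarmonicFunctionProbabilityMeasure}), which is harmonic by Theorem \ref{THEOREMANALOGTHOMA}; set $\mathcal{M}_n=\DIM\cdot\varphi$, a coherent system; and let $\chi$ be the character produced from $\left(\mathcal{M}_n\right)$ by Theorem \ref{THEOREM4.1.2.CharCSystems}. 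The same substitution as in part (a) shows that this $\chi$ obeys (\ref{7.1.1.1}). One point deserving explicit comment is that each $f_\omega$ is a well-defined character of $G\sim S(\infty)$: this is the special case $P=\delta_\omega$ of the scheme, for which $\varphi_\omega\left(\Lambda_n^{(k)}\right)=\mathbb{K}\left(\Lambda_n^{(k)},\omega\right)$ is harmonic, so the restrictions $f_\omega|_{G\sim S(n)}$ are automatically consistent across $n$.

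There is no serious analytic obstacle once Theorems \ref{THEOREM4.1.2.CharCSystems} and \ref{THEOREMANALOGTHOMA} are in hand; the entire content of Proposition \ref{Proposition7.1.1} is the repackaging of those two bijections into a single integral formula. The only place requiring genuine care is the bookkeeping that the factors $\DIM\left(\Lambda_n^{(k)}\right)$ and $d_l^{-|\lambda^{(l)}|}$ match up correctly through (\ref{3.2.2.h}), (\ref{4.1.2.R.CH.S}) and (\ref{LimitingMartinKernel}), so that what survives under the integral is exactly $f_\omega$ of (\ref{7.1.1.2}) and nothing else.
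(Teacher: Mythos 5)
Your proposal is correct and takes essentially the same route as the paper: the published proof likewise obtains Proposition \ref{Proposition7.1.1} by composing Theorem \ref{THEOREM4.1.2.CharCSystems} with Theorem \ref{THEOREMANALOGTHOMA}, reading the ratio $\mathcal{M}_n\left(\Lambda_n^{(k)}\right)/\DIM\left(\Lambda_n^{(k)}\right)$ in (\ref{4.1.2.R.CH.S}) as the value of a harmonic function and substituting its integral representation (\ref{HarmonicFunctionProbabilityMeasure}) with kernel (\ref{LimitingMartinKernel}). Your additional verifications (the dimension bookkeeping, uniqueness via injectivity, and the converse direction) simply make explicit what the paper's terse argument leaves implicit.
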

\begin{rem}
(a) Representation (\ref{7.1.1.1}) implies that the function $f_{\omega}$ is the extreme character of $G\sim S(\infty)$, and each extreme character of this
group can be represented as in equation (\ref{7.1.1.2}). Indeed, the set of all probability measures on $\Omega\left(G\right)$ is a convex set, and the extreme points of this set are the delta-measures. On the other hand, it follows from Theorem \ref{THEOREMANALOGTHOMA} and Theorem \ref{THEOREM4.1.2.CharCSystems} that there exists a one-to-one correspondence between the extreme points of $\mathcal{X}\left(G\sim S(\infty)\right)$, and the extreme points of the set of all probability measures on $\Omega\left(G\right)$.\\
(b) An explicit formula for $\chi^{\Lambda_n^{(k)}}$ was derived in Hirai, Hirai, Hora \cite{HiraiHiraiHoraI}. This formula can be used to get a different expression for $f_{\omega}$, see Hora and Hirai \cite{HoraHirai}, Theorem 3.4.
\end{rem}

\begin{proof}
Proposition \ref{Proposition7.1.1}
follows as a Corollary of Theorem \ref{THEOREMANALOGTHOMA}, and of Theorem
\ref{THEOREM4.1.2.CharCSystems}. Indeed, the ratio
$$
\frac{\mathcal{M}_n\left(\Lambda_n^{(k)}\right)}{\DIM\left(\Lambda_n^{(k)}\right)}
$$
in the right-hand side of equation (\ref{4.1.2.R.CH.S}) can be understood as the value of a harmonic function at $\Lambda_n^{(k)}$. Therefore, this ratio can be represented as the integral in the right-hand side of equation (\ref{HarmonicFunctionProbabilityMeasure}). This gives formulae (\ref{7.1.1.1}) and (\ref{7.1.1.2}).
\end{proof}
From Proposition \ref{Proposition7.1.1} we
conclude that there exists an integral representation for $\chi_{z_1,\ldots,z_k}$.
Namely, there exists a unique probability measure $P_{z_1,\ldots,z_k}$ on the generalized Thoma set $\Omega\left(G\right)$ such that
\begin{equation}\label{ksizzzmeasure}
\chi_{z_1,\ldots,z_k}(x)=\int\limits_{\Omega(G)}f_{\omega}(x)P_{z_1,\ldots,z_k}(d\omega),\;\; x\in G\sim S(\infty),
\end{equation}
where $f_{\omega}(x)$ is defined by equation (\ref{7.1.1.2}). \textit{The problem of harmonic analysis on $G\sim S(\infty)$ is to describe $P_{z_1,\ldots,z_k}$ explicitly}. In what follows we refer to $M_{z_1,\ldots,z_k}^{(n)}$ as to the multiple $z$-measures, and to $P_{z_1,\ldots,z_k}$ as to the multiple spectral $z$-measures.

The function $\varphi_{z_1,\ldots,z_k}$ on $\Gamma\left(G\right)$ defined by
$$
\varphi_{z_1,\ldots,z_k}\left(\Lambda_n^{(k)}\right)=
\frac{M_{z_1,\ldots,z_k}^{(n)}\left(\Lambda_n^{(k)}\right)}{\DIM\left(\Lambda_n^{(k)}\right)},
\;\;\Lambda_n^{(k)}=\left(\lambda^{(1)},\ldots,\lambda^{(k)}\right)\in\Y_n^{(k)},
$$
is harmonic. Theorem \ref{THEOREMANALOGTHOMA} can be applied to $\varphi_{z_1,\ldots,z_k}$, and we can write
\begin{equation}\label{ZMEASURESPECTRALREPRESENTATION}
\frac{M_{z_1,\ldots,z_k}^{(n)}\left(\Lambda_n^{(k)}\right)}{\DIM\left(\Lambda_n^{(k)}\right)}=\int\limits_{\Omega(G)}\mathbb{K}\left(\Lambda_n^{(k)},\omega\right)P_{z_1,\ldots,z_k}(d\omega),
\end{equation}
where $\mathbb{K}\left(\Lambda_n^{(k)},\omega\right)$ is defined by equation (\ref{LimitingMartinKernel}), and $P_{z_1,\ldots,z_k}$
is the same probability measure on the generalized Thoma set $\Omega(G)$
as  that in equation (\ref{ksizzzmeasure}). This is due the fact  that the formulae (\ref{ksizzzmeasure}) and
(\ref{ZMEASURESPECTRALREPRESENTATION}) are related via the correspondence between $\chi_{z_1,\ldots,z_k}$ and
$\varphi_{z_1,\ldots,z_k}$, see Theorem \ref{TheoremMultpleZmeasures}.
In what follows it is important that $P_{z_1,\ldots,z_k}$ is a unique
probability measure satisfying (\ref{ZMEASURESPECTRALREPRESENTATION}).
%%%%%%%%%%%%%%%%%%%%%%%%%%%%%%%%%%%%%%%%%%%%%%%%%%%%%%%%%%%%%%%%%%%%%%%%%
%%%%%%%%%%%%%%%%%%%%%%%%%%%%%%%%%%%%%%%%%%%%%%%%%%%%%%%%%%%%%%%%%%%%%%%%%
\section{Description of the spectral measures}
%%%%%%%%%%%%%%%%%%%%%%%%%%%%%%%%%%%%%%%%%%%%%%%%%%%%%%%%%%%%%%%%%%%%%%%%%
%%%%%%%%%%%%%%%%%%%%%%%%%%%%%%%%%%%%%%%%%%%%%%%%%%%%%%%%%%%%%%%%%%%%%%%%%
%%%%%%%%%%%%%%%%%%%%%%%%%%%%%%%%%%%%%%%%%%%%%%%%%%%%%%%%%%%%%%%%%%%%%%%%%
%%%%%%%%%%%%%%%%%%%%%%%%%%%%%%%%%%%%%%%%%%%%%%%%%%%%%%%%%%%%%%%%%%%%%%%%%
It is known (see, for example, Borodin and Olshanski \cite{BorodinOlshanskiMarkov})
that the $z$-measures $M_z^{(n)}$ defined by equation (\ref{StandardZMeasure}) form
a coherent system of probability measures on the Young graph $\Y$, and it can be represented as
\begin{equation}\label{ZmeasuresSpectralRepresentation}
\frac{M_z^{(n)}(\lambda)}{\dim\lambda}=\int\limits_{\Omega}s_{\lambda}^{o}(\omega)P_z(d\omega),\;\;\forall\lambda\in\Y_n,
\end{equation}
where $\Omega$ is the Thoma set defined by equation (\ref{ThomaSet}).
The functions  $s_{\lambda}^{o}(w)=s_{\lambda}^{o}(\alpha,\beta)$  in the right-hand side of
equation (\ref{ZmeasuresSpectralRepresentation}) are the Schur symmetric functions expressed as polynomials in variables
$\left\{p_r^{o}(\alpha,\beta):\;\;r\geq 1\right\}$ defined by
\begin{equation}\label{proJack1}
p_r^{o}(\alpha,\beta)=\left\{
  \begin{array}{ll}
    1, & r=1,\\
    \sum\limits_{i=1}^{\infty}\alpha_i^r+(-1)^{r-1}\sum\limits_{i=1}^{\infty}\beta_i^r, &  r\geq 2.
  \end{array}
\right.
\end{equation}
The measure $P_z$ is a unique probability measure on $\Omega$ corresponding to $M_z^{(n)}$. The measures $P_z$ are called \textit{the spectral $z$-measures},
see Borodin and Olshanski \cite{BorodinOlshanskiLetters}, Section 2.
A detail description of $P_{z}$ is available in the literature,
see Refs. \cite{Borodin1,Borodin2,BorodinOlshanskiLetters,OlshanskiNato}.
\begin{prop}All measures $P_{z}$, $z\in\C$ are supported by a subset $\Omega_0$ of the Thoma set $\Omega$.
The subset $\Omega_0$ is defined by
\begin{equation}\label{Omegao}
\Omega_0=\left\{\omega: \omega=(\alpha,\beta),\;
\begin{array}{c}
\alpha=\left(\alpha_1\geq\alpha_2\geq\ldots\geq 0\right)\\
\beta=\left(\beta_1\geq\beta_2\geq\ldots\geq 0\right)                     \end{array},\;\sum\limits_{i=1}^{\infty}(\alpha_i+\beta_i)=1\right\}.
\end{equation}
\end{prop}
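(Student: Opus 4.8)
The plan is to reduce the claim to showing that the nonnegative function
$$\gamma(\omega)=1-\sum_{i=1}^{\infty}\left(\alpha_i+\beta_i\right),\qquad \omega=(\alpha,\beta)\in\Omega,$$
vanishes $P_z$-almost everywhere. Since $\Omega_0=\{\omega\in\Omega:\gamma(\omega)=0\}$ and $\gamma\ge 0$ on all of $\Omega$ by the definition (\ref{ThomaSet}) of the Thoma set, this is equivalent to $P_z(\Omega\setminus\Omega_0)=0$. Thus the whole problem is to prove that the spectral measure $P_z$ puts no mass on the part of $\Omega$ carrying a strictly positive ``diffuse'' parameter $\gamma$.

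First I would record the probabilistic meaning of $\gamma$. Via the boundary (ergodic-method) interpretation of the spectral representation (\ref{ZmeasuresSpectralRepresentation}), a point $\omega$ arises as a limit of scaled modified Frobenius coordinates of random diagrams $\lambda$ distributed according to $M_z^{(n)}$: writing these coordinates as $a_1>a_2>\cdots$ and $b_1>b_2>\cdots$, one has $\alpha_i=\lim_n a_i/n$ and $\beta_i=\lim_n b_i/n$, while $\sum_i(a_i+b_i)=n-d(\lambda)$, where $d(\lambda)$ is the number of diagonal boxes. Consequently $\gamma(\omega)$ measures the asymptotic fraction of the mass of $\lambda$ that fails to condense into finitely many rows or columns. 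The assertion $\gamma=0$ a.s. thus says that under $M_z^{(n)}$ essentially all boxes are captured by $O(1)$ Frobenius coordinates of order $n$.

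The key computational input is the explicit asymptotic analysis of the $z$-measures (\ref{StandardZMeasure}). I would invoke the determinantal description of the point process of modified Frobenius coordinates of $M_z^{(n)}$, whose correlation kernel converges, after scaling by $1/n$, to an explicit hypergeometric-type kernel (Borodin \cite{Borodin1, Borodin2}, Borodin and Olshanski \cite{BorodinOlshanskiLetters}). From this convergence one reads off that the scaled coordinates $\{a_i/n\}\cup\{b_i/n\}$ converge to a configuration $\{\alpha_i\}\cup\{\beta_i\}$ with total mass $1$, i.e. $\sum_i(\alpha_i+\beta_i)=1$ almost surely; equivalently the expected diffuse fraction $\mathbb{E}_{M_z^{(n)}}\!\left[1-\tfrac1n\sum_i(a_i+b_i)\right]\to 0$. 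Transporting this through (\ref{ZmeasuresSpectralRepresentation}) gives $\int_\Omega\gamma\,dP_z=0$, hence $\gamma=0$ $P_z$-a.e., which is the desired support statement (\ref{Omegao}).

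The main obstacle is precisely that $\gamma$ is invisible to the algebra generated by the functions $s_\lambda^o$ (equivalently the $p_r^o$ of (\ref{proJack1})), because $p_1^o\equiv 1$ builds the normalization $\sum_i(\alpha_i+\beta_i)+\gamma=1$ directly into the power-sum data; no finite moment computation against (\ref{ZmeasuresSpectralRepresentation}) can detect $\gamma$, and the naive interchange of summation and limit fails (for the Plancherel degeneration the same individual limits yield $\gamma=1$). Controlling the diffuse mass therefore genuinely requires the quantitative asymptotics of $M_z^{(n)}$ rather than a soft argument, and this is the technical content of the references \cite{Borodin1,Borodin2,BorodinOlshanskiLetters,OlshanskiNato} that I would cite in support of the statement.
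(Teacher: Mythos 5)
Your proposal is correct and in substance coincides with the paper's treatment: the paper offers no independent argument but simply cites Borodin--Olshanski \cite{BorodinOlshanskiLetters} (Section 5, Theorem I), and your outline faithfully reproduces what that cited proof does, namely reduce the claim to $\int_{\Omega}\gamma\,dP_z=0$ and establish this via the quantitative scaling asymptotics of the correlation functions of the $z$-measures, combined with the pointwise bound $\gamma\geq 0$. Your closing observation --- that $\gamma$ is invisible to the algebra generated by the $p_r^{o}$ because $p_1^{o}\equiv 1$, so no soft moment argument against (\ref{ZmeasuresSpectralRepresentation}) can work (as the Plancherel degeneration shows) --- is exactly the right diagnosis of why the hard analytic input from \cite{Borodin1,Borodin2,BorodinOlshanskiLetters} is unavoidable.
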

\begin{proof} See Borodin and Olshanski \cite{BorodinOlshanskiLetters}, Section 5, Theorem I.
\end{proof}
Consider the probability space $\left(\Omega\left(G\right),
P_{z_1,\ldots,z_k}\right)$. The coordinates $\left(\widetilde{\alpha},\widetilde{\beta},\widetilde{\delta}\right)$ of $\Omega\left(G\right)$ are functions in $\omega\in\Omega\left(G\right)$, hence we may view them
as random variables. Theorem \ref{THEOREMDESCRIPTIONOFPz1z2zk}
provides an information on distribution of $\left(\widetilde{\alpha},\widetilde{\beta},\widetilde{\delta}\right)$, and thus gives a description of the multiple spectral $z$-measures $P_{z_1,\ldots,z_k}$ in terms of the spectral $z$-measures
$P_{z_1}$, $\ldots$, $P_{z_k}$.

Recall that the Dirichlet distribution $D\left(\tau_1,\ldots,\tau_k\right)$ with parameters $\tau_1$, $\ldots$, $\tau_k$ is defined by equation (\ref{PoissonDirichletDensity}).
\begin{thm}\label{THEOREMDESCRIPTIONOFPz1z2zk} Assume that for each $l=1,\ldots,k$
$$
\alpha^{(l)}=\left(\alpha_1^{(l)}\geq\alpha_2^{(l)}\geq\ldots\geq 0\right),\;\;
\beta^{(l)}=\left(\beta_1^{(l)}\geq\beta_2^{(l)}\geq\ldots\geq 0\right)
$$
are random variables whose joint distribution is determined by the spectral measure  $P_{z_l}$, and that these collections of random varaiables are pairwise independent for different $l$.
Let
$\delta^{(1)}$, $\ldots$, $\delta^{(k)}$ be random variables independent on $\alpha^{(l)}$ and $\beta^{(l)}$ whose joint distribution is the Dirichlet distribution $D\left(\tau_1,\ldots,\tau_k\right)$ with parameters
$\tau_1=a_1\bar{a}_1$, $\ldots$, $\tau_k=a_k\bar{a}_k$, where $a_l$, $l=1,\ldots, k$, are given by equation (\ref{Parametersa}).
In addition, let $\left(\widetilde{\alpha},\widetilde{\beta},\widetilde{\delta}\right)$
be the random coordinates of $\Omega\left(G\right)$ whose joint distribution is determined by the multiple spectral $z$-measure $P_{z_1,\ldots,z_k}$. Then
\begin{equation}
\widetilde{\alpha}^{(l)}=\delta^{(l)}\alpha^{(l)},\; \widetilde{\beta}^{(l)}=
\delta^{(l)}\beta^{(l)},\;\widetilde{\delta}^{(l)}=\delta^{(l)}
\end{equation}
in distribution, for each $l=1,\ldots,k$.
\end{thm}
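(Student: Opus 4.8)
The plan is to reduce everything to the uniqueness asserted in Proposition~\ref{Proposition7.1.1}(b): $P_{z_1,\ldots,z_k}$ is the \emph{only} probability measure on $\Omega(G)$ satisfying the integral identity (\ref{ZMEASURESPECTRALREPRESENTATION}) for all $\Lambda_n^{(k)}\in\Y_n^{(k)}$ and all $n$. Accordingly, I introduce the candidate measure $\widehat P$, defined as the law of the triple $(\widetilde\alpha,\widetilde\beta,\widetilde\delta)$ with components $\widetilde\alpha^{(l)}=\delta^{(l)}\alpha^{(l)}$, $\widetilde\beta^{(l)}=\delta^{(l)}\beta^{(l)}$, $\widetilde\delta^{(l)}=\delta^{(l)}$, where $(\alpha^{(l)},\beta^{(l)})$ follows the spectral $z$-measure of single-component parameter $a_l$ (the measure written $P_{z_l}$ in the statement, the parameters $a_l$ being those of (\ref{Parametersa})), independently across $l$, and $(\delta^{(1)},\ldots,\delta^{(k)})\sim D(a_1\bar a_1,\ldots,a_k\bar a_k)$ is independent of all the $(\alpha^{(l)},\beta^{(l)})$. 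First I would verify that $\widehat P$ is carried by $\Omega(G)$: by the Proposition around (\ref{Omegao}) each $P_{z_l}$ lives on $\Omega_0$, so $\sum_i(\alpha_i^{(l)}+\beta_i^{(l)})=1$, whence $\sum_i(\widetilde\alpha_i^{(l)}+\widetilde\beta_i^{(l)})=\delta^{(l)}=\widetilde\delta^{(l)}$, while $\sum_l\delta^{(l)}=1$; thus the defining constraints of $\Omega(G)$ in (\ref{3.3.1.Thoma}) hold almost surely. It then suffices to show that $\widehat P$ satisfies (\ref{ZMEASURESPECTRALREPRESENTATION}), for then $\widehat P=P_{z_1,\ldots,z_k}$, which is precisely the claimed distributional identity.

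The decisive step is a homogeneity property of the extended Schur functions under the rescaling $\alpha^{(l)}\mapsto\delta^{(l)}\alpha^{(l)}$, $\beta^{(l)}\mapsto\delta^{(l)}\beta^{(l)}$, $\delta^{(l)}\mapsto\delta^{(l)}$. Comparing (\ref{3.3.2.PowerSums}) with (\ref{proJack1}) one reads off that the extended power sums rescale as $P^{o}_{r,l}=(\delta^{(l)})^{r}\,p^{o}_{r}(\alpha^{(l)},\beta^{(l)})$ for every $r\geq1$ (the case $r=1$ being $\delta^{(l)}=\delta^{(l)}\cdot1$). Since each Schur function $s_{\mu}$ is isobaric of weighted degree $|\mu|$ in the power sums with $\deg p_{r}=r$, substituting this into the expansion $s_{\mu}=\sum_{\varrho}z_{\varrho}^{-1}\chi^{\mu}_{\varrho}p_{\varrho}$ used to define $S^{o}_{\lambda^{(l)}}$ yields the key identity
$$
S^{o}_{\lambda^{(l)}}\bigl(\delta^{(l)}\alpha^{(l)},\,\delta^{(l)}\beta^{(l)},\,\delta^{(l)}\bigr)
=\bigl(\delta^{(l)}\bigr)^{|\lambda^{(l)}|}\,s^{o}_{\lambda^{(l)}}\bigl(\alpha^{(l)},\beta^{(l)}\bigr),
$$
where $s^{o}_{\lambda^{(l)}}$ is the ordinary extended Schur function on the Thoma set $\Omega$ appearing in (\ref{ZmeasuresSpectralRepresentation}). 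This scaling identity is the one point that genuinely requires care, and I expect it to be the main obstacle; everything afterwards is moment bookkeeping.

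Finally, I insert this identity into the kernel (\ref{LimitingMartinKernel}) and integrate against $\widehat P$. The independence of the $\delta^{(l)}$ from the $(\alpha^{(l)},\beta^{(l)})$ and the mutual independence across $l$ factor the expectation as
$$
\int_{\Omega(G)}\mathbb{K}\bigl(\Lambda_n^{(k)},\omega\bigr)\widehat P(d\omega)
=\E\Bigl[\prod_{l=1}^{k}(\delta^{(l)})^{|\lambda^{(l)}|}\Bigr]\,
\prod_{l=1}^{k}\frac{1}{d_l^{|\lambda^{(l)}|}}\,
\E\bigl[s^{o}_{\lambda^{(l)}}(\alpha^{(l)},\beta^{(l)})\bigr].
$$
The Dirichlet moment formula evaluates the first factor as $\prod_{l}(a_l\bar a_l)_{|\lambda^{(l)}|}\big/\,(a_1\bar a_1+\cdots+a_k\bar a_k)_n$, while (\ref{ZmeasuresSpectralRepresentation}) identifies each $\E[s^{o}_{\lambda^{(l)}}]$ with $M^{(|\lambda^{(l)}|)}_{a_l}(\lambda^{(l)})/\dim\lambda^{(l)}$. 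Comparing the resulting product against the explicit formula (\ref{MultipleZmeasures}) for $M^{(n)}_{z_1,\ldots,z_k}(\Lambda_n^{(k)})$ divided by the dimension formula (\ref{2.1.2.1}) — the factors $n!$ and $\prod_l|\lambda^{(l)}|!$ cancel cleanly — shows the two sides agree, so $\widehat P$ verifies (\ref{ZMEASURESPECTRALREPRESENTATION}). By the uniqueness recalled above, $\widehat P=P_{z_1,\ldots,z_k}$, which is exactly the asserted equality in distribution.
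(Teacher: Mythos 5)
Your proposal is correct and follows essentially the same route as the paper's proof: you verify that the law of $(\delta^{(l)}\alpha^{(l)},\delta^{(l)}\beta^{(l)},\delta^{(l)})$ satisfies the defining identity (\ref{ZMEASURESPECTRALREPRESENTATION}) via the homogeneity $S^{o}_{\lambda^{(l)}}\bigl(\delta^{(l)}\alpha^{(l)},\delta^{(l)}\beta^{(l)},\delta^{(l)}\bigr)=\bigl(\delta^{(l)}\bigr)^{|\lambda^{(l)}|}s^{o}_{\lambda^{(l)}}\bigl(\alpha^{(l)},\beta^{(l)}\bigr)$ together with the Dirichlet moment formula, and then invoke the uniqueness of $P_{z_1,\ldots,z_k}$, which are exactly the ingredients the paper assembles in its relation (\ref{RelationSpectralZ}). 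Your explicit derivation of the homogeneity from the extended power sums (\ref{3.3.2.PowerSums}) (which the paper leaves implicit) and your observation that the relevant single-level spectral measures carry the parameters $a_l$ of (\ref{Parametersa}) are both correct.
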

\begin{proof}
We use the spectral representations for the $z$-measures $M_{z_l}^{(n)}$  (equation (\ref{ZmeasuresSpectralRepresentation})), the spectral representation for the multiple $z$-measures (equation (\ref{ZMEASURESPECTRALREPRESENTATION})), formula (\ref{MultipleZmeasures}), and formula (\ref{2.1.2.1})
for $\DIM\left(\Lambda_n^{(k)}\right)$ to obtain the relation
\begin{equation}
\begin{split}
&\int\limits_{\Omega(G)}\prod\limits_{j=1}^k
S_{\lambda^{(j)}}^{o}\left(\tilde{\alpha},\tilde{\beta},\tilde{\delta}\right)P_{z_1,\ldots,z_k}\left(d\tilde{\omega}\right)=\frac{\left(\tau_1\right)_{|\lambda^{(1)}|}\ldots \left(\tau_k\right)_{|\lambda^{(k)}|}}{\left(\tau_1+\ldots+\tau_k\right)_n}\\
&\times
\int\limits_{\Omega_0^{(1)}}\ldots\int\limits_{\Omega_0^{(k)}}
s_{\lambda^{(1)}}^{o}\left(\alpha^{(1)},\beta^{(1)}\right)\ldots s_{\lambda^{(k)}}^{o}
\left(\alpha^{(k)},\beta^{(k)}\right)P_{z_1}\left(d\omega^{(1)}\right)\ldots
P_{z_k}\left(d\omega^{(k)}\right),
\end{split}
\end{equation}
where $\Omega^{(1)}_0$, $\ldots$, $\Omega^{(k)}_0$ are the $k$ copies of the set $\Omega_0$ defined by equation
(\ref{Omegao}).
We rewrite the prefactor in the right-hand side of the equation above as
$$
\frac{\Gamma\left(\tau_1+\ldots+\tau_k\right)}{\Gamma\left(\tau_1\right)\ldots\Gamma\left(\tau_k\right)}
\underset{\delta^{(1)}+\ldots+\delta^{(k)}=1}{\underset{\delta^{(1)}\geq 0,\ldots, \delta^{(k)}\geq 0}{\int\ldots\int}}
\left(\delta^{(1)}\right)^{\tau_1+|\lambda^{(1)}|-1}\ldots
\left(\delta^{(k)}\right)^{\tau_k+|\lambda^{(k)}|-1}d\delta^{(1)}\ldots d\delta^{(k)}.
$$
The Schur functions are homogeneous symmetric functions, so we can write
$$
\left(\delta^{(l)}\right)^{|\lambda^{(l)}|}s_{\lambda^{(l)}}^{o}\left(\alpha^{(l)},\beta^{(l)}\right)
=s_{\lambda^{(l)}}^{o}\left(\delta^{(l)}\alpha^{(l)},\delta^{(l)}\beta^{(l)}\right),\; 1\leq l\leq k,
$$
and  obtain the relation
\begin{equation}\label{RelationSpectralZ}
\begin{split}
&\int\limits_{\Omega(G)}\prod\limits_{j=1}^kS_{\lambda^{(j)}}^{o}\left(\tilde{\alpha},\tilde{\beta},\tilde{\delta}\right)P_{z_1,\ldots,z_k}\left(d\tilde{w}\right)\\
&=
\int\limits_{\Omega_0^{(1)}}\ldots\int\limits_{\Omega_0^{(k)}}
\underset{\delta^{(1)}+\ldots+\delta^{(k)}=1}{\underset{\delta^{(1)}\geq 0,\ldots, \delta^{(k)}\geq 0}{\int\ldots\int}}
\prod\limits_{l=1}^ks_{\lambda^{(l)}}^{o}\left(\delta^{(l)}\alpha^{(l)},\delta^{(l)}\beta^{(l)}\right)
P_{z_1}\left(dw^{(1)}\right)\ldots
P_{z_k}\left(dw^{(k)}\right)\\
&\times D\left(\tau_1,\ldots,\tau_k\right)\left(d\delta^{(1)},\ldots,d\delta^{(k)}\right).
\end{split}
\end{equation}
Assume that $P_{z_1,\ldots,z_k}$ is such that
$$
\tilde{\alpha}^{(l)}=\delta^{(l)}\alpha^{(l)},\;\; l=1,\ldots, k
$$
in distribution, and
$$
\tilde{\delta}^{(l)}=\delta^{(l)}, \;\; l=1,\ldots, k
$$
in distribution where $\alpha^{(1)}$, $\ldots$, $\alpha^{(k)}$ are independent with distributions
$P_{z_1}$, $\ldots$, $P_{z_k}$ respectively, the joint distribution of
$\alpha^{(1)}$, $\ldots$, $\alpha^{(k)}$ is $D(\tau_1,\ldots,\tau_k)$, and each $\delta^{(l)}$,
$l=1,\ldots, k$ is independent of $\alpha^{(1)}$, $\ldots$
$\alpha^{(k)}$.
Then
$$
S_{\lambda^{(l)}}^{o}\left(\tilde{\alpha},\tilde{\beta},\tilde{\delta}\right)
=s_{\lambda^{(l)}}^{o}\left(\delta^{(l)}\alpha^{(l)},\delta^{(l)}\beta^{(l)}\right)
$$
in distribution, $P_{z_1,\ldots,z_k}$ is concentrated on $\Omega_0(G)$ defined by
\begin{equation}\label{3.3.1.Thoma0}
\begin{split}
&\Omega_0(G)=\biggl\{(\alpha,\beta,\delta)\biggr|
\alpha=\left(\alpha^{(1)},\ldots,\alpha^{(k)}\right),\beta=\left(\beta^{(1)},\ldots,\beta^{(k)}\right),
\delta=\left(\delta^{(1)},\ldots,\delta^{(k)}\right);\\
&\alpha^{(l)}=\left(\alpha_1^{(l)}\geq\alpha_2^{(l)}\geq\ldots\geq 0\right), \beta^{(l)}=\left(\beta_1^{(l)}\geq\beta_2^{(l)}\geq\ldots\geq 0\right),\\
&\delta^{(1)}\geq 0,\ldots,\delta^{(k)}\geq 0,\\
&\mbox{where}\;\;
\sum\limits_{i=1}^{\infty}\alpha_i^{(l)}+\beta_i^{(l)}=\delta^{(l)},\; 1\leq l\leq k,\;\mbox{and}\;\sum\limits_{l=1}^k\delta^{(l)}=1\biggl\},
\end{split}
\end{equation}
and equation (\ref{RelationSpectralZ}) is satisfied.
Since $P_{z_1,\ldots,z_k}$ is a unique probability measure for which equation
(\ref{RelationSpectralZ}) holds true, the statement of Theorem \ref{THEOREMDESCRIPTIONOFPz1z2zk} follows.
\end{proof}
%%%%%%%%%%%%%%%%%%%%%%%%%%%%%%%%%%%%%%%%%%%%%%%%%%%%%%%%%%%%%%%%%%%
%%%%%%%%%%%%%%%%%%%%%%%%%%%%%%%%%%%%%%%%%%%%%%%%%%%%%%%%%%%%
%%%%%%%%%%%%%%%%%%%%%%%%%%%%%%%%%%%%%%%%%%%%%%%%%%%%%%%%%%%%%%%%%%%%%%%%%%%%%%%%%%%%
\section{The point process $\mathcal{P}_{z_1,\ldots,z_k}$}\label{SectionPointProcess11}
\subsection{Definition of $\mathcal{P}_{z_1,\ldots,z_k}$}
Let $I=[-1,1]$, and $I^*=[-1,1]\setminus\{0\}$. Set
$$
\mathfrak{X}=\left\{1,\ldots,k\right\}\times I^{\ast}
$$
The set $\mathfrak{X}$ can be represented as
$$
\mathfrak{X}=\left\{\left(l,x^l\right):\;\;l\in \{1,\ldots,k\},\; x^l\in I^{\ast}\right\}.
$$
Denote by $\Conf\left(\mathfrak{X}\right)$ the collection of all finite and countably infinite subsets of $\mathfrak{X}$. Each $C\in\Conf\left(\mathfrak{X}\right)$ is called a point configuration, and
$\Conf\left(\mathfrak{X}\right)$ is called the space of point configurations.
Clearly, each $C\in\Conf\left(\mathfrak{X}\right)$ can be written as
$$
C=C_1\cup\ldots\cup C_k,
$$
where each $C_l$ is a subset of $I^{\ast}$.

Let $\Omega\left(G\right)$ be the generalized Thoma set defined by equation
(\ref{3.3.1.Thoma}). Define the map
$$
\phi:\; \Omega\left(G\right)\longrightarrow\Conf\left(\mathfrak{X}\right)
$$
by
$$
\left(\alpha^{(l)},\beta^{(l)},\delta^{(l)}\right)\rightarrow
C_l=\left\{\alpha_i^{(l)}\neq 0\right\}\cup\left\{-\beta_i^{(l)}\neq 0\right\},\;\; l=1,\ldots,k.
$$
We regard $\alpha_i^{(l)}$, $-\beta_i^{(l)}$ as coordinates of particles on the $l$th level. On each level we forget the ordering, remove the possible zero coordinates, and change the sign of the $\beta$-coordinates.

Recall that the probability measure $P_{z_1,\ldots,z_k}$ on $\Omega\left(G\right)$ was introduced in Section \ref{SectionIntRepChizzz} as the spectral measure of the characters $\chi_{z_1,\ldots,z_k}$, and that this measure was described in terms of the spectral $z$-measures $P_{z_1}$, $\ldots$, $P_{z_k}$ in Theorem \ref{THEOREMDESCRIPTIONOFPz1z2zk}. Denote by $\mathcal{P}_{z_1,\ldots,z_k}$ the pushforward of $P_{z_1,\ldots,z_k}$ under the map $\phi$. The measure $\mathcal{P}_{z_1,\ldots,z_k}$ is a probability measure on $\Conf\left(\mathfrak{X}\right)$, i.e. it is a point process on the space $\mathfrak{X}$.

A sequence $\varrho_1^{z_1,\ldots,z_k}$, $\varrho_2^{z_1,\ldots,z_k}$,
$\ldots$ of functions, where, for any $n$, $\varrho_n^{z_1,\ldots,z_k}$ is a symmetric function on $\mathfrak{X}^n$, can be assigned to the point process $\mathcal{P}_{z_1,\ldots,z_k}$. These functions are called the correlation functions of $\mathcal{P}_{z_1,\ldots,z_k}$, and they are defined by
\begin{equation}\label{correlation11.1.1}
\begin{split}
&\mathbb{E}\left(\prod\limits_{l=1}^k\prod\limits_{j}
\left(1+\varphi\left(\left(l,\alpha_j^{(l)}\right)\right)\right)
\left(1+\varphi\left(\left(l,-\beta_j^{(l)}\right)\right)\right)\right)\\
&=\sum\limits_{n=0}^{\infty}\frac{1}{n!}
\left(\sum\limits_{m_1,\ldots,m_n=1}^k
\int\limits_{I^n}
\prod\limits_{i=1}^n\varphi\left(m_i,x_i^{m_i}\right)
\varrho_n^{z_1,\ldots,z_k}\left[\left(m_1,x_1^{m_1}\right),\ldots,\left(m_n,x_n^{m_n}\right)\right]dx_1^{m_1}\ldots dx_n^{m_n}\right),
\end{split}
\end{equation}
where $\varphi$ is a compactly supported Borel function on $\mathfrak{X}$.
Note that equation (\ref{correlation11.1.1}) is equivalent to
\begin{equation}\label{correlation11.1.2}
\begin{split}
&\mathbb{E}\left(\prod\limits_{l=1}^k\prod\limits_{j}
\left(1+\varphi\left(\left(l,\alpha_j^{(l)}\right)\right)\right)
\left(1+\varphi\left(\left(l,-\beta_j^{(l)}\right)\right)\right)\right)\\
&=\sum\limits_{n=0}^{\infty}\frac{1}{n!}
\biggl(\sum\limits_{n_1,\ldots,n_k\geq 0}^k
\left(\begin{array}{c}
  n\\
  n_1,\ldots,n_k
\end{array}\right)
\\
&
\times\int\limits_{I^n}\varrho_n^{z_1,\ldots,z_k}\left[
\left(1,x_1^{1}\right),\ldots,\left(1,x_{n_1}^{1}\right),
\ldots,\left(k,x_1^k\right),\ldots, \left(k,x_{n_k}^k\right)\right]
\prod\limits_{l=1}^k\prod\limits_{i_l=1}^{n_l}\varphi\left(l,x_{i_l}^{l}\right)dx_{i_l}^{l}\biggr).
\end{split}
\end{equation}
Equation (\ref{correlation11.1.2}) is especially convenient for computations with
correlation functions.

\subsection{Lifting}\label{SectionLifting}
Set $t_1=|z_1|^2$, $\ldots$, $t_k=|z_k|^2$, and let $s_1$, $\ldots,s_k$ be independent gamma distributed random variables such that the distribution of $s_l$, $1\leq l\leq k$, has the form
$$
\frac{1}{\Gamma(t_l)}x^{t_l-1}e^{-x}dx.
$$
We assume that $s_1$, $\ldots,s_k$ are independent on  $\mathcal{P}_{z_1,\ldots,z_k}$. Given a configuration $C=\cup_{l=1}^kC_l$ we multiply the coordinates of all particles of $C_l$ by $s_l$,. The result is a point process on
$\widetilde{\mathfrak{X}}
=\{1,\ldots,k\}\times\R^{\ast},
$
where $\R^{\ast}=\R\setminus\{0\}$. We denote this process by
$\widetilde{\mathcal{P}}_{z_1,\ldots,z_k}$.

The correlation functions $\widetilde{\varrho}_n^{z_1,\ldots,z_k}$
of the lifted point process $\widetilde{\mathcal{P}}_{z_1,\ldots,z_k}$ are defined by formulas similar to (\ref{correlation11.1.1}) and (\ref{correlation11.1.2}). In particular, the integration over $I^{n}$ is replaced by integration over $\R^{n}$.
\begin{prop} The relation between the correlation functions $\widetilde{\varrho}_n^{z_1,\ldots,z_k}$ of the lifted point process
$\widetilde{\mathcal{P}}_{z_1,\ldots,z_k}$ and the correlation functions $\varrho_n^{z_1,\ldots,z_k}$ of the original point process $\mathcal{P}_{z_1,\ldots,z_k}$
 is
\begin{equation}
\begin{split}
&\widetilde{\varrho}_n^{z_1,\ldots,z_k}\left[\left(1,x_1^1\right),\ldots,\left(1,x_{n_1}^1\right),\ldots,\left(k,x_{1}^{k}\right),\ldots,\left(k,x^k_{n_k}\right)\right]\\
&=\int\limits_0^{\infty}\ldots\int\limits_{0}^{\infty}
\varrho_n^{z_1,\ldots,z_k}\left[\left(1,\frac{x_1^1}{s_1}\right),\ldots,\left(1,\frac{x_{n_1}^1}{s_1}\right),\ldots,\left(k,\frac{x_{1}^{k}}{s_k}\right),\ldots,\left(k,\frac{x^k_{n_k}}{s_k}\right)\right]
\frac{ds_1}{s_1^{n_1}}\ldots\frac{ds_k}{s_k^{n_k}}.
\nonumber
\end{split}
\end{equation}
\begin{proof} Application of formula (\ref{correlation11.1.2}), and of its analogue for the correlation function of the lifted process $\widetilde{\mathcal{P}}_{z_1,\ldots,z_k}$.
\end{proof}

\end{prop}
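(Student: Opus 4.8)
The plan is to reduce the statement to the generating-functional characterization of correlation functions recorded in (\ref{correlation11.1.2}): for either process the functions $\varrho_n$ (resp.\ $\widetilde\varrho_n$) are exactly the coefficients in the expansion of $\mathbb{E}\prod_{\mathrm{pts}}(1+\varphi)$ as $\varphi$ ranges over compactly supported Borel test functions, and this expansion determines them uniquely. It therefore suffices to compute the generating functional of $\widetilde{\mathcal{P}}_{z_1,\ldots,z_k}$ and to match coefficients against the same expansion written for $\mathcal{P}_{z_1,\ldots,z_k}$.

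First I would condition on the gamma variables $s_1,\ldots,s_k$, which are independent of the underlying sample $C=C_1\cup\ldots\cup C_k$ of $\mathcal{P}_{z_1,\ldots,z_k}$. Since $\widetilde{\mathcal{P}}_{z_1,\ldots,z_k}$ is obtained by multiplying every coordinate on level $l$ by $s_l$, for a test function $\varphi$ on $\widetilde{\mathfrak{X}}=\{1,\ldots,k\}\times\R^{\ast}$ one has
\[
\mathbb{E}_{\widetilde{\mathcal{P}}}\prod_{\mathrm{pts}}(1+\varphi)
=\mathbb{E}_{s}\,\mathbb{E}_{C}\prod_{l=1}^{k}\prod_{x\in C_l}\bigl(1+\varphi(l,s_l x)\bigr),
\]
so that, for fixed $s$, the inner expectation is the generating functional of $\mathcal{P}_{z_1,\ldots,z_k}$ evaluated at the pulled-back test function $\varphi_s(l,x):=\varphi(l,s_l x)$ on $\mathfrak{X}$.

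Next I would insert the expansion (\ref{correlation11.1.2}) for $\mathcal{P}_{z_1,\ldots,z_k}$ applied to $\varphi_s$. In the term indexed by $(n_1,\ldots,n_k)$ all $n_l$ arguments on level $l$ carry the same factor $s_l$, so for each $l$ the change of variables $y_i^{l}=s_l x_i^{l}$ replaces $\varphi_s(l,x_i^{l})$ by $\varphi(l,y_i^{l})$, replaces the argument $(l,x_i^{l})$ of $\varrho_n$ by $(l,y_i^{l}/s_l)$, and contributes the Jacobian $\prod_l s_l^{-n_l}$. Averaging over the gamma law of $s$ and comparing the coefficient of $\prod_{l,i}\varphi(l,y_i^{l})$ with the defining expansion (\ref{correlation11.1.2}) written for $\widetilde{\mathcal{P}}_{z_1,\ldots,z_k}$ then identifies $\widetilde\varrho_n$ with the asserted integral transform of $\varrho_n$, the Jacobian supplying precisely the weights $ds_l/s_l^{n_l}$.

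The step requiring genuine care is the interchange of the $s$-average with the infinite sum over $n$ and with the integrals in the generating functional, i.e.\ a Fubini/dominated-convergence justification. For this I would use that the coordinates of $\mathcal{P}_{z_1,\ldots,z_k}$ lie in the bounded interval $I=[-1,1]$ and that each level is a.s.\ summable (by the Thoma constraint $\sum_i(\alpha_i^{(l)}+\beta_i^{(l)})\le\delta^{(l)}$): this confines the $x$-integrals to $[-1,1]$ before scaling, makes the scaled supports $s_l I$ exhaust $\R^{\ast}$ after integrating in $s_l$, and, combined with the integrability of the gamma densities, legitimizes the exchange. A secondary, purely bookkeeping, point is to keep the multinomial coefficients and the grouping of variables by level aligned so that the per-level change of variables and the per-level averaging decouple cleanly across $l=1,\ldots,k$.
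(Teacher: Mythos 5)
Your proposal is correct and is exactly the argument the paper compresses into its one-line proof: both identify $\widetilde{\varrho}_n$ by conditioning on the gamma variables, pulling the test function back through the scaling in the generating-functional expansion (\ref{correlation11.1.2}), changing variables level by level, and matching coefficients against the lifted analogue of that expansion, with your Fubini/dominated-convergence remark supplying a justification the paper leaves implicit. One small bookkeeping note: your derivation correctly retains the gamma densities $\Gamma(t_l)^{-1}s_l^{t_l-1}e^{-s_l}$ in the $s$-average, which the paper's displayed formula suppresses in the notation $ds_l/s_l^{n_l}$ (they reappear explicitly in the proof of Theorem \ref{MAINTHEOREMCORRELATIONS}), so your version is if anything the more precise statement.
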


%%%%%%%%%%%%%%%%%%%%%%%%%%%%%%%%%%%%%%%%%%%%%%%%%%%%%%%%%%%%%%%%%%%%%%%
%%%%%%%%%%%%%%%%%%%%%%%%%%%%%%%%%%%%%%%%%%%%%%%%%%%%%%%%%%%%%%
\subsection{The Whittaker point  process}\label{SectionWhitt}
In what follows (see Theorem \ref{MAINTHEOREMCORRELATIONS} below) we will
express the correlation functions $\widetilde{\varrho}_n^{z_1,\ldots,z_k}$ of the lifted point process $\widetilde{\mathcal{P}}^{z_1,\ldots,z_k}$ in terms of the known correlation functions $\varrho_{n,z_1}^{\Whittaker}$, $\ldots$,
$\varrho_{n,z_k}^{\Whittaker}$ of the Whittaker processes $\mathcal{P}_{z_1}^{\Whittaker}$, $\ldots$, $\mathcal{P}_{z_k}^{\Whittaker}$
with parameters
$z_1$,$\ldots$,$z_k$.  By definition, the Whittaker point process $\mathcal{P}_{z}^{\Whittaker}$  with a parameter $z\in\C\setminus\left\{0\right\}$ is a determinantal point process on $\R^{\ast}=\R\setminus\left\{0\right\}$ with a kernel expressed through the Whittaker function $W_{\kappa,\mu}(x)$  with parameters $\kappa,\mu\in\C$.
The function $W_{\kappa,\mu}(x)$
is a unique solution of
$$
W^{''}-\left(\frac{1}{4}-\frac{\kappa}{x}+\frac{\mu^2-\frac{1}{4}}{x^2}\right)W=0
$$
with the condition $W(x)\sim x^{\kappa}e^{-\frac{x}{2}}$ as $x\rightarrow+\infty$.

Assume that $z=a+ib\in\C\setminus\{0\}$, and set
$$
P_{\pm}(x)=\frac{|z|}{|\Gamma(1\pm z)|}W_{\pm a+\frac{1}{2}, ib}(x),\;\;
Q_{\pm}(x)=\frac{|z|^{3}x^{-\frac{1}{2}}}{|\Gamma(1\pm z)|}W_{\pm a-\frac{1}{2}, ib}(x).
$$
Define
\begin{equation}
K^z_{\Whittaker}(x,y)=
\left\{
\begin{array}{ll}
\frac{P_+(x)Q_+(y)-Q_+(x)P_+(y)}{x-y}, & x>0, y>0,\\
\frac{P_+(x)P_-(-y)+Q_+(x)Q_+(-y)}{x-y}, & x>0, y<0,\\
\frac{P_+(x)P_+(y)+Q_-(-x)Q_+(y)}{x-y}, & x<0, y>0,\\
-\frac{P_-(-x)Q_-(-y)-Q_-(-x)P_-(-y)}{x-y}, & x<0, y<0.\\
\end{array}
\right.
\end{equation}
The correlation functions of $\mathcal{P}_{z}^{\Whittaker}$ can be written as
\begin{equation}
\varrho_{n,z}^{\Whittaker}\left(x_1,\ldots,x_n\right)=
\det\left[K^z_{\Whittaker}(x_i,x_j)\right]_{i,j=1}^n,
\end{equation}
where $n=1,2$, $\ldots$; $x_1$, $\ldots$, $x_n\in\R^{\ast}$.

It is known that the spectral $z$-measure $P_{z}$ defined by equation (\ref{ZmeasuresSpectralRepresentation}) can be described by
the Whittaker point process $\mathcal{P}_{z}^{\Whittaker}$, see Borodin and Olshanski \cite{BorodinOlshanskiLetters}, and references therein. Namely, the measure $P_z$ is a probability measure on the Thoma set $\Omega$. Introduce
a map
$$
\Omega\longrightarrow\Conf\left(I^{\ast}\right),\;
\omega=(\alpha,\beta)\longrightarrow C=\left\{\left(\alpha_i\neq 0\right)\cup\left(-\beta_j\neq 0\right)\right\},
$$
where $\Conf\left(I^{\ast}\right)$ is the collection of all finite and countably infinite subsets of $I^{\ast}$. The measure $\mathcal{P}_{z}$ is the pushforward of $P_z$ under this map, and it can be understood as a point process on $I^{\ast}$.
The lifting $\widetilde{\mathcal{P}}_z$ of $\mathcal{P}_z$ constructed with the gamma-distributed (with the parameter $t=|z|^2$) random variable is the Whittaker point process $\mathcal{P}_z^{\Whittaker}$.
%%%%%%%%%%%%%%%%%%%%%%%%%%%%%%%%%%%%%%%%%%%%%%%%%%%%%%%%%%%%%%%
%%%%%%%%%%%%%%%%%%%%%%%%%%%%%%%%%%%%%%%%%%%%%%%%%%%%%%%%%%%%%%%
\subsection{A formula for the correlation functions of $\widetilde{P}_{z_1,\ldots,z_k}$}
In this Section we express the correlation functions $\widetilde{\varrho}_n^{z_1,\ldots,z_k}$ of the lifted point process $\widetilde{P}_{z_1,\ldots,z_k}$ (introduced in Section \ref{SectionLifting}) in terms of the correlation functions
$\varrho_{n_1,z_1}^{\Whittaker}$, $\ldots$, $\varrho_{n_k,z_k}^{\Whittaker}$
of the Whittaker point processes $\mathcal{P}_{z_1}^{\Whittaker}$,
$\ldots$, $\mathcal{P}_{z_k}^{\Whittaker}$  described in Section \ref{SectionWhitt}.

\begin{thm}\label{MAINTHEOREMCORRELATIONS} The correlation functions $\widetilde{\varrho}_n^{z_1,\ldots,z_k}$ of $\widetilde{\mathcal{P}}_{z_1,\ldots,z_k}$
can be written as
\begin{equation}\label{MainFormulaCorF}
\begin{split}
&\widetilde{\varrho}_n^{z_1,\ldots,z_k}\left(x_1^{(1)},\ldots,x^{(1)}_{n_1};\ldots;x_1^{(k)},\ldots,x_{n_k}^{(k)}\right)
=\frac{\Gamma\left(a_1\bar{a}_1+\ldots+a_k\bar{a}_k\right)}{\Gamma\left(a_1\bar{a}_1\right)\ldots\Gamma\left(a_k\bar{a}_k\right)}\\
&\times\underset{\delta_1+\ldots+\delta_k=1}{\underset{\delta_1\geq 0,\ldots, \delta_k\geq 0}{\int\ldots\int}}
\varrho_{n_1,z_1}^{\Whittaker}\left(\frac{x_1^{(1)}}{\delta_1},\ldots,\frac{x_{n_1}^{(1)}}{\delta_1}\right)
\ldots\varrho_{n_k,z_k}^{\Whittaker}\left(\frac{x_1^{(k)}}{\delta_k},\ldots,\frac{x_{n_k}^{(k)}}{\delta_k}\right)\\
&\times\delta_1^{a_1\bar{a}_1-n_1-1}\ldots\delta_k^{a_k\bar{a}_k-n_k-1}d\delta_1\ldots
d\delta_{k}.
\end{split}
\end{equation}
Here $n=1,2,\ldots$; $n_1+\ldots+n_k=n$; $x_1^{(1)}$, $\ldots$, $x_{n_1}^{(1)}$; $\ldots$;
$x_1^{(k)}$, $\ldots$, $x_n^{(k)}\in\R^{\ast}$, and
$\varrho_{n_1,z_1}^{\Whittaker}$, $\ldots$, $\varrho_{n_k,z_k}^{\Whittaker}$ are the correlation functions of the Whittaker
determinantal processes with parameters $z_1$, $\ldots$, $z_k$, respectively. The parameters $a_1$, $\ldots$, $a_k$ are defined by equation (\ref{Parametersa}).
\end{thm}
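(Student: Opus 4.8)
The plan is to obtain (\ref{MainFormulaCorF}) directly from the description of the spectral measure $P_{z_1,\ldots,z_k}$ furnished by Theorem \ref{THEOREMDESCRIPTIONOFPz1z2zk}, combined with the definition of the lifting in Section \ref{SectionLifting} and the identification of the gamma-lifted single-component processes with the Whittaker processes recalled at the end of Section \ref{SectionWhitt}. The conceptual crux is that, once one conditions on the ``mass split'' $\delta=(\delta^{(1)},\ldots,\delta^{(k)})$, the whole picture factorizes over the $k$ levels into independent, rescaled single-component processes. Concretely, I would first reformulate Theorem \ref{THEOREMDESCRIPTIONOFPz1z2zk} at the level of point configurations rather than coordinates: it asserts $\widetilde{\alpha}^{(l)}=\delta^{(l)}\alpha^{(l)}$, $\widetilde{\beta}^{(l)}=\delta^{(l)}\beta^{(l)}$ in distribution, where $(\alpha^{(l)},\beta^{(l)})\sim P_{z_l}$ are independent across $l$ and $\delta\sim D(a_1\bar{a}_1,\ldots,a_k\bar{a}_k)$ is independent of them. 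Pushing through the map $\phi$ of Section \ref{SectionPointProcess11}, this says that, conditionally on $\delta$, the process $\mathcal{P}_{z_1,\ldots,z_k}$ consists of $k$ independent components, the $l$-th being the image of $\mathcal{P}_{z_l}$ under the dilation $x\mapsto\delta^{(l)}x$ of $I^{\ast}$.

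Next I would carry this conditional description through the lifting. The lifting of Section \ref{SectionLifting} dilates the $l$-th component by an independent gamma$(t_l)$ variable $s_l$ with $t_l=|z_l|^2$, so conditionally on $\delta$ the $l$-th component of $\widetilde{\mathcal{P}}_{z_1,\ldots,z_k}$ is obtained from $\mathcal{P}_{z_l}$ by composing the two commuting dilations $x\mapsto s_l x$ and $x\mapsto\delta^{(l)}x$. Since $s_l$ is independent of both $\delta$ and $\mathcal{P}_{z_l}$, I may average over $s_l$ first: by the identification recalled at the end of Section \ref{SectionWhitt}, the gamma$(t_l)$-dilation of $\mathcal{P}_{z_l}$ is exactly the Whittaker process $\mathcal{P}_{z_l}^{\Whittaker}$ (note that the gamma parameter $t_l=|z_l|^2$ of the lifting is precisely the one defining $\mathcal{P}_{z_l}^{\Whittaker}$, while the Dirichlet parameter $a_l\bar{a}_l$ is genuinely different and independent of this). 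Hence, conditionally on $\delta$, the lifted process $\widetilde{\mathcal{P}}_{z_1,\ldots,z_k}$ is a family of $k$ independent components, the $l$-th being the $\delta^{(l)}$-dilation of $\mathcal{P}_{z_l}^{\Whittaker}$.

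Finally I would assemble the correlation functions. Under a deterministic dilation $x\mapsto cx$ the $m$-point correlation function transforms as $\varrho_m(\cdot)\mapsto c^{-m}\varrho_m(\cdot/c)$, and correlation functions of an independent superposition factorize over the summands. Thus the conditional $n$-point correlation function of $\widetilde{\mathcal{P}}_{z_1,\ldots,z_k}$ given $\delta$ equals $\prod_{l=1}^k(\delta^{(l)})^{-n_l}\varrho_{n_l,z_l}^{\Whittaker}\left(x_1^{(l)}/\delta^{(l)},\ldots,x_{n_l}^{(l)}/\delta^{(l)}\right)$. Integrating this against the Dirichlet density $\frac{\Gamma(\sum_l a_l\bar{a}_l)}{\prod_l\Gamma(a_l\bar{a}_l)}\prod_l(\delta^{(l)})^{a_l\bar{a}_l-1}$ over the simplex $\{\delta^{(l)}\geq 0,\ \sum_l\delta^{(l)}=1\}$ and collecting the exponents $a_l\bar{a}_l-1-n_l=a_l\bar{a}_l-n_l-1$ yields precisely (\ref{MainFormulaCorF}). (As an alternative, more computational route, one can instead start from the lifting formula of the Proposition in Section \ref{SectionLifting}, substitute the expression for $\varrho_n^{z_1,\ldots,z_k}$ coming from Theorem \ref{THEOREMDESCRIPTIONOFPz1z2zk}, and reorganize using the gamma--Dirichlet change of variables; the two routes produce the same identity.)

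The main obstacle is not the closing computation but the measure-theoretic care needed in the first two steps: turning the ``in distribution'' equalities of Theorem \ref{THEOREMDESCRIPTIONOFPz1z2zk} into an honest statement about the conditional law of the configuration-valued random variable, and justifying that one may first condition on $\delta$, then average over the gamma variables $s_l$ to produce the Whittaker processes, and only afterwards integrate over the simplex. This last interchange is an application of Fubini that requires controlling the integrability of the Whittaker correlation functions against the Dirichlet weight as $\delta^{(l)}\to 0$; once the conditional independence-plus-dilation structure is firmly in place, the scaling rule for correlation functions and the Dirichlet integration are routine.
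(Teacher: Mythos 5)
Your proposal is correct and is in substance the same argument as the paper's: the paper likewise combines Theorem \ref{THEOREMDESCRIPTIONOFPz1z2zk} with the lifting (writing $\widetilde{\alpha}_j^{(l)}=s_l\delta^{(l)}\alpha_j^{(l)}$, $\widetilde{\beta}_j^{(l)}=s_l\delta^{(l)}\beta_j^{(l)}$ in distribution), factors the expectation $\mathbb{E}\prod(1+\varphi)$ over the independent gamma variables $s_l$ and the Dirichlet-distributed $\delta^{(l)}$, expands each level through the correlation functions of $\mathcal{P}_{z_l}$, identifies the gamma-lift of $\mathcal{P}_{z_l}$ with $\mathcal{P}_{z_l}^{\Whittaker}$, and then integrates over the simplex. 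The only difference is presentational: the paper executes what you call the ``alternative, more computational route,'' working through the generating-functional identity (\ref{correlation11.1.2.new}) rather than phrasing the argument as conditional independence given $\delta$ plus the dilation scaling rule for correlation functions.
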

\begin{proof}
Recall that $\widetilde{\mathcal{P}}_{z_1,\ldots,z_k}$ lives on point configurations
$C$ which can be represented as
$$
C=C_1\cup\ldots\cup C_k,\;\; C_l=\left\{\tilde{\alpha}_i^{(l)}\neq 0\right\}\cup\left\{-\tilde{\beta}_i^{(l)}\neq 0\right\}.
$$
The correlation functions $\widetilde{\varrho}_n^{z_1,\ldots,z_k}$ can be defined by equation (\ref{correlation11.1.2}) as soon as $\alpha_j^{(l)}$ is replaced by
$\tilde{\alpha}_j^{(l)}$, $\beta_j^{(l)}$ is replaced by $\tilde{\beta}_j^{(l)}$, and $I^{n}$ is replaced by $\R^n$.  Namely, we have
\begin{equation}\label{correlation11.1.2.new}
\begin{split}
&\mathbb{E}\left(\prod\limits_{l=1}^k\prod\limits_{j}
\left(1+\varphi\left(\left(l,\tilde{\alpha}_j^{(l)}\right)\right)\right)
\left(1+\varphi\left(\left(l,-\tilde{\beta}_j^{(l)}\right)\right)\right)\right)\\
&=\sum\limits_{n=0}^{\infty}\frac{1}{n!}
\biggl(\sum\limits_{n_1,\ldots,n_k\geq 0}^k
\left(\begin{array}{c}
  n\\
  n_1,\ldots,n_k
\end{array}\right)
\\
&
\times\int\limits_{R^n}\widetilde{\varrho}_n^{z_1,\ldots,z_k}\left[
\left(1,x_1^{1}\right),\ldots,\left(1,x_{n_1}^{1}\right),
\ldots,\left(k,x_1^k\right),\ldots, \left(k,x_{n_k}^k\right)\right]
\prod\limits_{l=1}^k\prod\limits_{i_l=1}^{n_l}\varphi\left(l,x_{i_l}^{l}\right)dx_{i_l}^{l}\biggr).
\end{split}
\end{equation}

We use Theorem \ref{THEOREMDESCRIPTIONOFPz1z2zk}, and the definition of lifting in Section \ref{SectionLifting} to conclude that
\begin{equation}\label{11r}
\widetilde{\alpha}_j^{(l)}=s_l\delta^{(l)}\alpha_j^{(l)},\;\;
\widetilde{\beta}_j^{(l)}=s_l\delta^{(l)}\beta_j^{(l)};\;\; l=1,\ldots,k
\end{equation}
in distribution, where $\alpha^{(l)}=\left(\alpha_1^{(l)}\geq\alpha_2^{(l)}\geq\ldots\geq 0\right)$, $\beta^{(l)}=\left(\beta_1^{(l)}\geq\beta_2^{(l)}\geq\ldots\geq 0\right)$,  $\delta^{(l)}$ are random variables whose distribution is described in  the statement of Theorem \ref{THEOREMDESCRIPTIONOFPz1z2zk}, and $s_l$ is the gamma distributed (with the parameter $t_l=|z_l|^2$) random variable. Taking relation (\ref{11r}) into account, and using independence  of random variables we can write
\begin{equation}\label{11.7.Ex}
\begin{split}
&\mathbb{E}\left[\prod\limits_{l=1}^k\prod\limits_{j}\left(1+\varphi\left(\left(l,\widetilde{\alpha}_j^{(l)}\right)\right)\right)\left(1+\varphi\left(\left(l,-\widetilde{\beta}_j^{(l)}
\right)\right)\right)\right]\\
&=\int\limits_0^{\infty}\ldots\int\limits_0^{\infty}
\frac{s_1^{t_1-1}\ldots s_k^{t_k-1}e^{-s_1-\ldots-s_k}}{\Gamma(t_1)\ldots\Gamma(t_k)}
ds_1\ldots ds_k\\
&\times\underset{\delta^{(1)}+\ldots+\delta^{(k)}=1}{\underset{\delta^{(1)}\geq 0,\ldots, \delta^{(k)}\geq 0}{\int\ldots\int}}
\frac{\Gamma\left(a_1\bar{a}_1+\ldots+a_k\bar{a}_k\right)}{\Gamma\left(a_1\bar{a}_1\right)\ldots\Gamma\left(a_k\bar{a}_k\right)}
\left(\delta^{(1)}\right)^{a_1\bar{a}_1-1}\ldots\left(\delta^{(k)}\right)^{a_k\bar{a}_k-1}
d\delta^{(1)}\ldots d\delta^{(k)}\\
&\times\prod\limits_{l=1}^k\mathbb{E}
\left[\prod\limits_{j}
\left(1+\varphi\left(\left(l,s_l\delta^{(l)}\alpha_j^{(l)}\right)\right)\right)
\left(1+\varphi\left(\left(l,-s_l\delta^{(l)}\beta_j^{(l)}
\right)\right)\right)\right],
\end{split}
\end{equation}
where the expectation in the right-hand side is with respect to $\mathcal{P}_{z_l}$.
Each such expectation can be represented as
\begin{equation}\label{11.8.Ex}
\begin{split}
&\mathbb{E}
\left[\prod\limits_{j}
\left(1+\varphi\left(\left(l,s_l\delta^{(l)}\alpha_j^{(l)}\right)\right)\right)
\left(1+\varphi\left(\left(l,-s_l\delta^{(l)}\beta_j^{(l)}
\right)\right)\right)\right]\\
&=\sum\limits_{n_l=0}^{\infty}\frac{1}{n_l!}
\int\limits_{I^{n_l}}
\varphi\left(\left(l,s_l\delta^{(l)}x_1^{(l)}\right)\right)
\ldots\varphi\left(\left(l,s_l\delta^{(l)}x_{n_l}^{(l)}\right)\right)
\varrho_{n_l}^{z_l}\left(x_1^{(l)},\ldots,x_{n_l}^{(l)}\right)dx_1^{(l)}\ldots dx_{n_l}^{(l)}.
\end{split}
\end{equation}
We insert (\ref{11.8.Ex}) into (\ref{11.7.Ex}), change the variables, and use the relation between the correlation functions $\varrho_{n_1}^{z_l}$ of $\mathcal{P}_{z_l}$, and the correlation functions of the corresponding lifted process $\widetilde{\mathcal{P}}_{z_l}$.
We compare the result of these manipulations with equation (\ref{correlation11.1.2.new}).
Taking into account that the lifted process $\widetilde{\mathcal{P}}_{z_l}$ is the Whittaker point process $\mathcal{P}_{z_l}^{\Whittaker}$, we get formula (\ref{MainFormulaCorF}).
\end{proof}

\end{document}